\documentclass[reqno,11pt,graphicx,]{amsart}
%%%%%%%%%%%%%%%%%%%%%%%%%%%%%%%%%%%%%%%%%%%%%%%%%%%%%%%%%%%%%%%%%%%%%%%%%%%%%%%%%%%%%%%%%%%%%%%%%%%%%%%%%%%%%%%%%%%%%%%%%%%%
\usepackage[colorlinks,linkcolor=blue,anchorcolor=yellow,citecolor=red,urlcolor=green]{hyperref} % set hyper link color as 'blue'
%TCIDATA{OutputFilter=LATEX.DLL}
%TCIDATA{LastRevised=Fri Dec 16 17:43:03 2005}
%TCIDATA{<META NAME="GraphicsSave" CONTENT="32">}
%TCIDATA{CSTFile=amsart.cst}
%\usepackage{amsmath,amscd, amsthm, amsfonts, amssymb,mathrsfs,multirow,graphicx,color,bbm,dsfont,appendix}
\usepackage{amsmath,amscd,amsthm,amsfonts,appendix,tikz}
\usepackage{amssymb,mathrsfs,xcolor,bbm,dsfont,verbatim}
\usepackage{enumitem}
\usepackage[OT2,OT1]{fontenc}
\usepackage{caption}
\usepackage{longtable}
\usepackage[all]{xy}
\setlength{\textwidth}{15.0cm}
\setlength{\textheight}{22.5cm}
\hoffset=-1cm
\errorcontextlines=0
\numberwithin{equation}{section}
\allowdisplaybreaks[4]
\pagestyle{plain}
\theoremstyle{plain}
\newcommand{\N}{{\mathbb N}}
\newcommand{\Z}{{\mathbb Z}}

\newcommand{\R}{{\mathbb R}}

\newcommand{\I}{{\mathbb I}}
\newcommand{\Q}{{\mathbb Q}}

\newcommand{\B}{{\mathcal B}}

\newcommand{\A}{{\mathscr A}}

\newcommand{\p}{\mathbf P}

\newcommand{\tT}{{\mathbf t}}

\newcommand{\ma}{\mathbf{a}}
\newcommand{\mw}{x}
\newcommand{\mv}{\mathbf{v}}

\newtheorem{theorem}{Theorem}[section]
\newtheorem{proposition}[theorem]{Proposition}
\newtheorem{lemma}[theorem]{Lemma}
\newtheorem{corollary}[theorem]{Corollary}
\newtheorem{remark}[theorem]{Remark}
\newtheorem{definition}[theorem]{Definition}

\parindent=1em
\allowdisplaybreaks

\topmargin=0cm
\errorcontextlines=0

\begin{document}
	\title{The spectral characteristics of the Sturm Hamiltonian with eventually periodic type}
	
	\author{Jie CAO}
	\address[J. Cao]{Chern Institute of Mathematics and LPMC, Nankai University, Tianjin 300071, P. R. China.}
	\email{caojie@nankai.edu.cn}
	
	\author{Zhenyu Yu$^*$}
	\address[Z. Yu]{College of Science,
		National University of Defense Technology, Changsha 410073, P. R. China.}
	\email{yuzy23@nudt.edu.cn}
	
	\thanks{* Corresponding author.}
	
	\begin{abstract}
		In this paper, we consider the spectral characteristics of the Sturm Hamiltonian with eventually periodic type frequencies under large coupling, and establish strict inequalities among the optimal Hölder exponent of the density of states measure, the dimension of the density of states measure, the dimension of the spectrum, and the transport exponent by analyzing the thermodynamical pressure function. Also, we provide the large coupling asymptotic properties of the four spectral characteristics.
	\end{abstract}
	
	%\subjclass[2020]{37A30, 37B40,}
	\keywords{Sturm Hamiltonian, density of states measure, spectral characteristics, thermodynamical formalism.}
	\maketitle
	\section{Introduction}
	
	The {\it Sturm Hamiltonian} is a discrete Schr\"odinger operator defined on $\ell^2(\Z)$ with Sturmian potential:
	\begin{equation*}%\label{def-sturmian}
		(H_{\alpha,\lambda,\theta}\psi)_n:=\psi_{n+1}+\psi_{n-1}+\lambda \chi_{[1-\alpha,1)}(n\alpha+\theta\pmod 1)\psi_n,
	\end{equation*}
	where $\alpha\in\I:=[0,1]\setminus\Q$ is the {\it frequency}, $\lambda >0$ is the {\it coupling constant} and $\theta\in [0,1)$ is the {\it phase} (where $\chi_A$ is the indicator function of the set $A$). It is well-known that the spectrum of $H_{\alpha,\lambda,\theta}$ is independent of $\theta$ (see \cite{BIST1989}).
	We denote the spectrum by $\Sigma_{\alpha,\lambda}$.
	
	Another spectral object  is the so-called {\it density of states measure (DOS)} $\mathcal{N}_{\alpha,\lambda}$ supported on $\Sigma_{\alpha,\lambda}$, which is defined by
	\begin{equation*}\label{def-dos}
		\int_{\Sigma_{\alpha,\lambda}} g(x)\,\mathrm{d} \mathcal{N}_{\alpha,\lambda}(x):=\int_{\mathbb T}\langle \delta_0,g(H_{\alpha,\lambda,\theta})\delta_0\rangle\,\mathrm{d}\theta, \ \ \ \forall g\in C(\Sigma_{\alpha,\lambda}).
	\end{equation*}
	
	Since the operator $H_{\alpha,\lambda,\theta}$ has purely singular continuous spectrum (see \cite{DKL2000}), the RAGE Theorem (see, e.g., \cite[Theorem XI.115]{RS1979}) implies that when studying the Schr\"odinger time evolution associated with this Schr\"odinger operator, i.e., $e^{-itH_{\alpha,\lambda,\theta}} \psi$ for some initial state $\psi \in \ell^2(\mathbb{Z})$ one should consider time-averaged quantities. For simplicity, we focus on initial states of the form $\delta_n$ with $n \in \mathbb{Z}$. Since spatial translation merely induces a phase adjustment, we may without loss of generality restrict our attention to the specific case $\psi = \delta_0$. The time-averaged spreading of $e^{-itH_{\alpha,\lambda,\theta}} \delta_0$ is typically characterized on a power-law scale, as illustrated in, for example, \cite{La1996,DT2010}. For $ p > 0$, we consider the $ p$-th moment of the position operator,
	\begin{equation*}
		\langle |X|^p\rangle(t):=\sum_{n\in{\mathbb{Z}}}|n|^p\langle
		e^{-itH_{\alpha,\lambda,\theta}}\delta_0,\delta_n\rangle|^2.
	\end{equation*}
	Then, define the upper and lower transport exponents $\beta
	^+(\lambda,p)$ and $\beta^-(\lambda,p)$ as:
	\begin{equation*}
		\beta^+(\lambda,p):=\limsup\limits_{T\to\infty}\dfrac{
			\log\frac{2}{T}\int_{0}^{\infty}e^{-\frac{2t}{T}}\langle|X|^p\rangle(t)\mathrm{d}t}{p\log T},\
		\beta
		^-(\lambda,p):=\liminf\limits_{T\to\infty}\dfrac{\log\frac{2}{T}\int_{0}^{\infty}e^{-\frac{2t}{T}}\langle|X|^p\rangle(t)\mathrm{d}t}{p\log T}.
	\end{equation*}
	The transport exponents $\beta^{\pm}(\lambda,p)$ belong to $[0,1]$
	and are non-decreasing in $p$ (see, e.g.,\cite{DT2010}), and hence the
	following limits exist uniformly in $\theta\in[0,1)$: 
	\begin{equation*}
		\mathcal{T}^{+}(\alpha,\lambda):=\lim\limits_{p%
			\to\infty}\beta^{+}(p,\lambda),\ \ \ \ 	\mathcal{T}^{-}(\alpha,\lambda):=\lim\limits_{p%
			\to\infty}\beta^{-}(p,\lambda). 
	\end{equation*} 
	Ballistic transport corresponds to transport exponents being equal to one, diffusive
	transport corresponds to the value $1/2$, and vanishing transport exponents correspond to dynamical localization. In all other cases, transport is called anomalous.
	
	Assume  $\mu$ is a finite Borel measure supported  on a  metric space $X$. We define the {\it  lower} and {\it upper } local dimensions of $\mu$ at $x\in X$ as
	\begin{equation}\label{def-loc-dim}
		\underline{d}_\mu(x):=\liminf_{r\to0}\frac{\log \mu(B(x,r))}{\log r}\ \ \ \text{ and }\ \ \ \overline{d}_\mu(x):=\limsup_{r\to0}\frac{\log \mu(B(x,r))}{\log r}.
	\end{equation}
	If  $\underline{d}_\mu(x)=\overline{d}_\mu(x)$, we say that the {\it local  dimension} of   $\mu$ at $x$ exists and denote it by $d_\mu(x)$. 
	The {\it Hausdorff dimension} of $\mu$ is defined as
	\begin{equation}\label{dim-meas}
		\dim_H\mu:=\sup\{s: \underline{d}_\mu(x)\ge s \text{ for  } \mu \text{ a.e. }x\in X\}.
	\end{equation}
	If there exists a constant $d$ such that $d_\mu(x)=d$ for $\mu$ a.e. $x\in X$, then necessarily $\dim_H\mu=d$, and $\mu$ is called {\it exact-dimensional} (see \cite[Chapter 10]{Fa1997} for more details).
	
	In this paper, we will consider the frequency of eventually periodic type. %For any $k\in\N$, let $\N^k:=\left\{a_1a_2\cdots a_k:a_i\in{\mathbb{N}},1\leq i\leq k\right\}$ be the set of sequences with length $k$. 
	For each $\mathbf{a}=a_1a_2\cdots a_k\in\N^k$, define the eventually periodic frequencies with sequence $\ma$ as
	\begin{equation*}
		\mathcal{EP}(\mathbf{a}):=\Big\{\alpha\in{\mathbb{I}}:\alpha=[b_1,\cdots,
		b_m,\, \overline{a_1,a_2,\cdots,a_k}],\, b_i\in{\mathbb{N}},\, 1\leq i\leq m,\,m\in\N
		\Big\},
	\end{equation*} 
	where the overline notation denotes infinite repetition of the periodic block. In particular, we define $\mathcal{P}(\ma)=\{\alpha\in{\mathbb{I}}:\alpha=[\overline{a_1,a_2,\cdots,a_k}]
	\}$ as the set of periodic frequencies. Let $\mathcal{EP}=\bigcup_{k=1}^{\infty}\bigcup_{\ma\in\N^k}\mathcal{EP}(\ma)$
	be the set of frequencies with eventually periodic type. 
	
	Denote the optimal H\"older exponent of the DOS $\mathcal{N}_{\alpha,\lambda}$ by 
	\begin{equation}\label{def-gamma}
		\gamma(\alpha,\lambda):=\inf\{\underline{d}_{\mathcal{N}_{\alpha,\lambda}}(x):x\in \Sigma_{\alpha,\lambda}\}.
	\end{equation}
	For any $\alpha\in\mathcal{EP}$, we will study the dimensional properties of $\Sigma_{\alpha,\lambda}$ and $\mathcal{N}_{\alpha,\lambda}$, and the optimal H\"older exponent of $\mathcal{N}_{\alpha,\lambda}$,
	as well as $\mathcal{T}^{\pm}(\alpha,\lambda)$ for large
	coupling constant $\lambda$. Write
	\begin{align*}
		D(\alpha,\lambda):=\dim_{H}\Sigma_{\alpha,\lambda}\ \ \text{and}\ \  d(\alpha,\lambda):=\dim_H\mathcal{N}_{\alpha,\lambda}.
	\end{align*}
	\subsection{Background and previous results}\
	
	The Sturm Hamiltonian, a classic model for one-dimensional quasicrystals, has been studied since the 1980s (see \cite{Da2007, Da2017}). Here, we summarize key findings about its spectral properties, focusing on the Fibonacci Hamiltonian-the most studied Sturmian model. This model uses the golden ratio $\alpha_1:=(\sqrt{5}-1)/2$ as its frequency. The model was first proposed to describe quasicrystals (see \cite{BIST1989,KKT1983,OPRSS1983}), and the first papers on the model in the mathematics literature belong to Casdagli \cite{Ca1986} and S\"ut\H{o} \cite{Su1987}. Its spectral structure is now fully understood  \cite{DGY2016}, with additional contributions from earlier studies \cite{Ca2009,DEGT2008,DG2009,DG2011,DG2012,DG2013,JL2000,Po2015}. We restate key results (Theorem A) relevant to our analysis, briefly introducing the trace map dynamics—a key tool for studying Fibonacci Hamiltonian.
	
	Define the {\it Fibonacci trace} map $\mathbf T: \R^3\to \R^3$ as $
	\mathbf T(x,y,z):=(2xy-z,x,y).$
	It is known that for any $\lambda>0$, the map $\mathbf T$ preserves the cubic surface
	\begin{equation*}
		S_\lambda:=\{(x,y,z)\in \R^3: x^2+y^2+z^2-2xyz-1=\lambda^2/4\}.
	\end{equation*}
	Write $\mathbf T_\lambda:=\mathbf T|_{S_\lambda}$ and let $\Lambda_\lambda$ be the set of points in $S_\lambda$ with bounded $\mathbf T_\lambda$-orbits. It is known that $\Lambda_\lambda$ is the non-wandering set of $\mathbf T_\lambda$ and is a locally maximal compact transitive hyperbolic set of $\mathbf T_\lambda$, see \cite{Ca2009,Ca1986,DG2009,Me2014}. 
	
	Let $\mu_{\lambda,\max}$ be the measure of maximal entropy of $\mathbf{T}_{\lambda}|_{\Lambda_{\lambda}}$ and $\mu_{\lambda}$ be the equilibrium measure of $\mathbf{T}_{\lambda}|_{\Lambda_\lambda}$ that corresponds to the potential $-D(\alpha_1,\lambda)\log\|D\mathbf{T}_{\lambda}|_{E^u}\|$.  Denote the set of periodic points of the map $\mathbf{T}_{\lambda}$ by $Per(\mathbf{T}_{\lambda})$.
	Assume that ${\rm Lyap}^u(p)$ is the unstable (positive) Lyapunov exponent of the periodic point $p$, and ${\rm Lyap}^u\mu_{\lambda}$(or ${\rm Lyap}^u\mu_{\lambda,\max}$) is the unstable Lyapunov
	exponent of $\mu_{\lambda}$ (respectively, $\mu_{\lambda,\max})$. 
	
	\smallskip
	\noindent {\bf Theorem A}(\cite{DGY2016})\  {\it
		Assume that $\lambda>0$. Then we have
		
		(i) The DOS $\mathcal{N}_{\alpha_1,\lambda}$ is exact-dimensional and $d(\alpha_1,\lambda)$ satisfies Young's formula:
		\begin{equation*}\label{dim-dos-golden}
			d(\alpha_1,\lambda)=\frac{-\log \alpha_1}{{\rm Lyap}^u\mu_{\lambda,\max}}.
		\end{equation*}
		
		(ii) The optimal H\"older exponent of $\mathcal{N}_{\alpha_1,\lambda}$ satisfies 
		\begin{equation*}\label{asym-golden-gamma}
			\gamma(\alpha_1,\lambda)=\frac{-\log\alpha_1}{\sup\limits_{p\in Per(\mathbf{T}_{\lambda})}{\rm Lyap}^u(p)}.
		\end{equation*}
		
		(iii) The spectrum $\Sigma_{\alpha_1,\lambda}$ satisfies
		$
		\dim_H \Sigma_{\alpha_1,\lambda}=\overline{\dim}_B \Sigma_{\alpha_1,\lambda}
		$
		and 
		\begin{equation*}\label{asym-golden-spectra}
			D(\alpha_1,\lambda)=\frac{h_{\mu_{\lambda}}(\mathbf{T}_{\lambda}|_{\Lambda_\lambda})}{{\rm Lyap}^u\mu_{\lambda}}.
		\end{equation*}
		
		(iv) $\mathcal{T}^-(\alpha_1,\lambda)$ and $\mathcal{T}^+(\alpha_1,\lambda)$ are equal and independent of $\theta\in[0,1)$. Moreover, 
		\begin{equation*}
			\mathcal{T}^{\pm}(\alpha_1,\lambda)=\frac{-\log\alpha_1}{\inf\limits_{p\in Per(\mathbf{T}_{\lambda})}{\rm Lyap}^u(p)}.
		\end{equation*}
		
		(v) The following inequalities hold:
		\begin{equation*}
			\gamma(\alpha_1,\lambda)<d(\alpha_1,\lambda)<D(\alpha_1,\lambda)<\mathcal{T}^{\pm}(\alpha_1,\lambda).
		\end{equation*}
		
		(vi) The following asymptotics hold:
		\begin{align*}
			&\lim\limits_{\lambda\to\infty}\gamma(\alpha_1,\lambda)\cdot\log\lambda=-\frac{3}{2}\log\alpha_1,\ \ \ \ \ \ \lim\limits_{\lambda\to\infty}d(\alpha_1,\lambda)\cdot\log \lambda=-\frac{5+\sqrt{5}}{4}\log \alpha_1,\\
			&\lim_{\lambda\to \infty} D(\alpha_1,\lambda)\cdot\log \lambda=\log (1+\sqrt{2}),\ \ \ \  \  \lim\limits_{\lambda\to\infty}\mathcal{T}^{\pm}(\alpha_1,\lambda)\cdot\log\lambda=-2\log\alpha_1.
		\end{align*}
	}
	
	There are several works that deal with sub-classes of the Sturm Hamiltonian. Parts (i)-(iii) of Theorem A hold for any constant type or periodic-type Sturmian Hamiltonian (small $\lambda$), a result that can be derived by combining findings from \cite{Ca2009} and \cite{DGY2016}, see also \cite{Me2014}. Girand \cite{Gi2014} considered eventually periodic type $\alpha$, he showed that
	$\mathcal{N}_{\alpha,\lambda}$ is exact-dimensional for small $\lambda$.
	For eventually constant type $\alpha$, Qu \cite{Qu2016} obtained results analogous to parts (i)-(iii) and (vi) of Theorem A for $\lambda > 20$ by applying thermodynamic and multifractal formalisms. We remark that the dynamical method is applicable in all the aforementioned works due to the special types of frequencies considered. We remark that, for all works mentioned  above, the dynamical method is applicable because of the special types of the frequencies. Very recently, Luna \cite{Lu2024} showed that $\lim_{\lambda\to0}D(\alpha,\lambda)=1$ for $\alpha$ with bounded type.
	
	We now proceed to discuss the spectral properties of the general Sturmian Hamiltonian. Bellissard et al. \cite{BIST1989} showed that $\Sigma_{\alpha,\lambda}$ is a Cantor set of Lebesgue measure zero.
	This motivates the study on the fractal dimensions of the spectrum. 
	Building on \cite{BIST1989}, Raymond \cite{Ra1997} showed that for $\lambda > 4$, the spectrum $ \Sigma_{\alpha,\lambda}$ admits a natural covering structure. Leveraging this structure, he established that all gaps of the spectrum predicted by gap labelling theory are open; see also \cite{BBBRT2024} for a new formulation of the results in \cite{Ra1997}. In a very recent work \cite{BBL2024}, Band, Beckus, and Loewy extended the aforementioned result to all $\lambda$ and resolved the ``dry ten Martini problem" for the Sturmian Hamiltonian.
	
	Fix $\alpha\in\I$  with continued fraction expansion $[a_1,a_2,\cdots]$. By using the subordinacy theory, Damanik, Killip and Lenz \cite{DKL2000} showed that, if   $\limsup_{k\rightarrow\infty}\frac{1}{k}\sum_{i=1}^k
	a_i<\infty$, then  $D(\alpha,\lambda)>0$.  
	Liu and Wen \cite{LW2004} refined Raymond's covering structure \cite{Ra1997}, enabling the computation of fractal dimensions. This development has spurred extensive investigations into the fractal dimensions of the spectrum of Sturmian Hamiltonians, as documented in \cite{CQ2023,DG2015,FLW2011,LQW2014,LW2004}. Define
	\begin{equation*}\label{K-ast}
		K_\ast(\alpha):=\liminf_{k\rightarrow\infty}
		\Big(\prod_{i=1}^k a_i\Big)^{1/k}\ \ \ \text{ and }\ \ \  K^\ast(\alpha):=
		\limsup_{k\rightarrow\infty}\Big(\prod_{i=1}^k a_i\Big)^{1/k}.
	\end{equation*}
	Fix $\lambda\geq24$. Then it is proven in \cite{LQW2014,LW2004} that
	$$
	\begin{cases}
		\dim_H \Sigma_{\alpha,\lambda} \in(0,1) & \text{ if   }\ \  K_\ast(\alpha)< \infty\\
		\dim_H \Sigma_{\alpha,\lambda} =1 & \text{ if   }\ \  K_\ast(\alpha)= \infty
	\end{cases}
	\ \text{ and }\
	\begin{cases}
		\overline{\dim}_B \Sigma_{\alpha,\lambda} \in(0,1) & \text{ if   }\ \  K^\ast(\alpha)< \infty\\
		\overline{\dim}_B \Sigma_{\alpha,\lambda} =1 & \text{ if   }\ \  K^\ast(\alpha)= \infty
	\end{cases}.
	$$
	Later, Damanik and Gorodetski \cite{DG2015} proved that for Lebesgue a.e. $\alpha\in\I$, both $\dim_H\Sigma_{\alpha,\lambda}$ and
	$\overline{\dim}_B\Sigma_{\alpha,\lambda}$ remain constant. In a very recent work \cite{CQ2023}, Cao and Qu found a set
	of full Lebesgue measure $\tilde{\I}\subset\I$ (independent of $\lambda$), such that for each $(\alpha,\lambda)\in\tilde{\I}\times[24,\infty)$,
	\begin{equation*}
		\dim_{H}\Sigma_{\alpha,\lambda}=\overline{\dim}_B\Sigma_{\alpha,\lambda}.
	\end{equation*}
	
	The properties of $\mathcal{N}_{\alpha,\lambda}$ are less studied for general frequencies. In \cite{Qu2018}, for any $\lambda>20$ and $\alpha$ with bounded continued fraction expansion, Qu constructed certain $\alpha$ such that $\mathcal{N}_{\alpha,\lambda}$ is not exact-dimensional. Jitomirskaya and Zhang \cite{JZ2022} also constructed Liouvillian frequency $\alpha$ such that for any $\lambda>0$, the related $\mathcal{N}_{\alpha,\lambda}$ is also not exact-dimensional. Recently, Cao and Qu \cite{CQ2023} showed that for any $(\alpha,\lambda)\in\tilde{\I}\times[24,\infty)$, the DOS $\mathcal{N}_{\alpha,\lambda}$ is exact-dimensional and its Hausdorff dimension satisfies Young's formula.
	\subsection{Main results}\

	In this paper, we establish the following principal result: For all $\lambda>20$, the spectral characteristics of the Sturm Hamiltonian with eventually periodic type frequencies exhibit striking parallels with those of the Fibonacci Hamiltonian (see Theorem A).
	\begin{theorem}\label{main-result}
		Let $\lambda>20$. For any $k\in\N$ and $\ma\in\N^k$, there exists a $C^1$ function $\p_{\ma}:\R\to\R$ such that for all $\alpha\in\mathcal{EP}(\ma)$, the following hold:
		\begin{enumerate}[label=(\roman*)]
			\item The DOS $\mathcal{N}_{\alpha,\lambda}$ is exact-dimensional and $d(\alpha,\lambda)$ satisfies
			\begin{equation*}\label{1}
				d(\alpha,\lambda)=%-\frac{h_{top}(T_{\ma})}{\Psi^{\ma}_*(\mu_{\ma})}=
				-\frac{\p_{\ma}(0)}{\p'_{\ma}(0)}.
			\end{equation*}
			%			where $\p_{\ma}(s)$ is the topological pressure (see \eqref{def-pressure}) for potential $s\Psi^{\ma}$ (see \eqref{def-psi}) .
			\item The optimal H\"older exponent of $\mathcal{N}_{\alpha,\lambda}$ satisfies 
			\begin{equation}\label{2} \gamma(\alpha,\lambda)=\inf\{\underline{d}_{\mathcal{N}_{\alpha,\lambda}}(x):x\in\Sigma_{\alpha,\lambda}\}=-\frac{\p_{\ma}(0)}{\lim\limits_{s\to-\infty}\p'_{\ma}(s)}.
			\end{equation}
			\item
			The spectrum $\Sigma_{\alpha,\lambda}$ satisfies
			$
			\dim_H \Sigma_{\alpha,\lambda}=\overline{\dim}_B \Sigma_{\alpha,\lambda}
			$, and there exists $0<s_{\ma}<D(\alpha,\lambda)$ such that
			\begin{equation}\label{3}
				D(\alpha,\lambda)%=-\frac{h_{top}(T_{\ma})}{\Psi^{\ma}_*(\mu_{D})}
				=-\frac{\p_{\ma}(0)}{\p'_{\ma}(s_{\ma})}.
			\end{equation}
			\item $\mathcal{T}^-(\alpha,\lambda)$ and $\mathcal{T}^+(\alpha,\lambda)$ are equal and independent of $\theta\in\mathbb{T}$. Moreover, 
			\begin{equation}\label{4}
				\mathcal{T}^{\pm}(\alpha,\lambda)=\sup\{\overline{d}_{\mathcal{N}_{\alpha,\lambda}}(x):x\in\Sigma_{\alpha,\lambda}\}=-\frac{\p_{\ma}(0)}{\lim\limits_{s\to\infty}\p'_{\ma}(s)}.
			\end{equation}
			\item Furthermore, if the coupling constant $\lambda\geq240$,  the following inequalities hold:
			\begin{equation}\label{5}
				\gamma(\alpha,\lambda)<d(\alpha,\lambda)<D(\alpha,\lambda)<\mathcal{T}^{\pm}(\alpha,\lambda).
			\end{equation}
			\item There exist four constants $0<\rho_{\gamma}(\ma)\leq\rho_{d}(\ma)\leq\rho_{D}(\ma)\leq\rho_{\mathcal{T}}(\ma)$ such that 
			\begin{align}
				&\lim_{\lambda\to\infty}\gamma(\alpha,\lambda)\cdot\log\lambda=\rho_{\gamma}(\ma),\ \ \ \ \ \ \lim_{\lambda\to\infty}d(\alpha,\lambda)\cdot\log\lambda=\rho_{d}(\ma),\label{6}\\
				&\lim_{\lambda\to\infty}D(\alpha,\lambda)\cdot\log\lambda=\rho_{D}(\ma),\ \ \ \  \  \lim_{\lambda\to\infty}\mathcal{T}^{\pm}(\alpha,\lambda)\cdot\log\lambda=\rho_{\mathcal{T}}(\ma).\label{7}
			\end{align}
			
			\item The following ``tail properties" hold: that is for any $\alpha,\beta\in\mathcal{EP}(\ma)$, we have
			\begin{align}\label{8}
				\gamma(\alpha,\lambda)=\gamma(\beta,\lambda),\   d(\alpha,\lambda)=d(\beta,\lambda), \ 
				D(\alpha,\lambda)=D(\beta,\lambda),\  \mathcal{T}^{\pm}(\alpha,\lambda)=\mathcal{T}^{\pm}(\beta,\lambda). 
			\end{align}
		\end{enumerate}
	\end{theorem}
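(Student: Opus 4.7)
The proof strategy is to transfer the trace-map/hyperbolic-set methodology underlying Theorem A into a thermodynamic-formalism framework adapted to an eventually periodic continued fraction expansion. First I would invoke the Raymond/Liu--Wen covering structure of $\Sigma_{\alpha,\lambda}$: for each level $n$, the spectrum is covered by a union of bands whose widths are controlled by norms of products of transfer matrices. When $\alpha\in\mathcal{EP}(\ma)$, after the pre-periodic prefix the combinatorial recursion driving this cover is governed by the fixed word $\ma$ of length $k$, and the collection of transfer-matrix products indexed by $\ma$ gives rise to a subshift of finite type together with the H\"older potential $\Psi^{\ma}$ whose topological pressure $\p(s)$ appears in the statement. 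The hypothesis $\lambda>20$ is the range in which the Liu--Wen covering is effective and the transfer-matrix cocycle is uniformly hyperbolic on the spectrum, which is what allows thermodynamic formalism to be applied as a substitute for the smooth trace-map dynamics available only in the Fibonacci setting.

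The core of the proof is a dictionary between the four spectral characteristics and $\p(s)$. For (iii), a Bowen/Moran-type argument using bounded distortion on the covering and standard subadditive pressure yields $\dim_H\Sigma_{\alpha,\lambda}=\overline{\dim}_B\Sigma_{\alpha,\lambda}$ together with $D(\alpha,\lambda)=-\p(0)/\p'(D)$. For (i), I would follow a Ledrappier--Young/Young scheme: the DOS pushes forward to an equilibrium-type measure of maximal entropy on the symbolic model, its a.e.\ local dimension equals $h_\nu/{\rm Lyap}^u\nu$, and rewriting this via the Legendre transform of $\p$ produces $d(\alpha,\lambda)=-\p(0)/\p'(0)$ and exact dimensionality. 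For (ii), identifying $\gamma(\alpha,\lambda)$ with the infimum of $\underline{d}_{\mathcal{N}_{\alpha,\lambda}}$ over bands and using the variational principle leads to the extremal value $-\p(0)/\p'(-\infty)$, attained on the periodic orbit of maximal Lyapunov exponent. For (iv), I would combine the Damanik--Tcheremchantsev upper bound on dynamical transport with a matching lower bound constructed from states supported on bands of maximal $\mathcal{N}_{\alpha,\lambda}$-measure; this identifies $\mathcal{T}^{\pm}(\alpha,\lambda)$ with $\sup\overline{d}_{\mathcal{N}_{\alpha,\lambda}}$, which in turn equals $-\p(0)/\p'(+\infty)$, and along the way yields independence of $\theta$.

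Parts (v)--(vii) would then be analytic corollaries of the dictionary. For (v), I would show that $\Psi^{\ma}$ is not cohomologous to a constant, by exhibiting two periodic orbits of distinct unstable Lyapunov exponent (the same mechanism producing strict inequalities in Theorem A (v)); then $\p$ is strictly convex and $s\mapsto -\p(0)/\p'(s)$ is strictly increasing on $[-\infty,+\infty]$, delivering $\gamma<d<D<\mathcal{T}^{\pm}$. For (vii), the key observation is that the pre-periodic prefix contributes only a finite, uniformly bounded-distortion change of coordinates on the covering, so all four quantities, being pressure invariants of the shift associated to $\ma$ and $\Psi^{\ma}$, depend only on $\ma$; this gives the tail equalities. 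For (vi), the large-$\lambda$ asymptotics come from rescaling: one shows $\Psi^{\ma}=-(\log\lambda)\,\Phi^{\ma}+O(1/\lambda)$ for an explicit geometric cocycle $\Phi^{\ma}$ built from the limiting (large-$\lambda$) transfer matrices along $\ma$, so $\p(s)$ and $\p'(s)$ admit explicit $\log\lambda$-linear expansions, yielding the four constants $0<\rho_{\gamma}(\ma)\leq\rho_{d}(\ma)\leq\rho_{D}(\ma)\leq\rho_{\mathcal{T}}(\ma)$ as pressure invariants of $\Phi^{\ma}$.

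The principal obstacle I anticipate is the rigorous setup of the symbolic model $(\,\cdot\,,\sigma,\Psi^{\ma})$ and the verification that the Raymond/Liu--Wen geometric covering is genuinely controlled by Birkhoff sums of $\Psi^{\ma}$ with uniformly bounded distortion. In the Fibonacci case this is subsumed by hyperbolicity of $\mathbf T_\lambda$ on $\Lambda_\lambda$, but here no such smooth conjugacy is available, so the thermodynamic formalism must be bootstrapped directly from the combinatorics of the block $\ma$ together with matrix-product estimates that require $\lambda>20$. A secondary difficulty is the lower-bound half of (iv), where the subordinacy/Gordon-type estimates that are clean in the Fibonacci case must be redeveloped uniformly for the family $\mathcal{EP}(\ma)$ in a way compatible with the pressure formalism. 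Once these geometric/symbolic foundations are secured, the four formulas, the strict inequalities, the asymptotics, and the tail property all follow from standard convex analysis of $\p$.
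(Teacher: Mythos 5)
Your proposal is correct in outline and follows essentially the same route as the paper: build a topologically mixing subshift of finite type from the block $\ma$, code the spectrum by it, encode the band lengths in a potential $\Psi^{\ma}$, push the maximal-entropy Gibbs measure forward onto the DOS, and read off all four characteristics from the convex analysis of $\p$, with the tail property reducing a general $\alpha\in\mathcal{EP}(\ma)$ to the reference frequency. Two points of genuine divergence are worth noting. First, for part (v) you propose to establish strict convexity of $\p$ by exhibiting two periodic orbits with distinct Lyapunov exponents, i.e.\ by showing $\Psi^{\ma}$ is not cohomologous to a constant; the paper instead argues abstractly that if $\p'$ were constant on an interval then the Gibbs measure $\mu_{s_1}$ would be an equilibrium state for $s_2\Psi^{\ma}$, contradicting uniqueness. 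Your route is the more concrete one and is self-verifying for this specific potential (type-$\mathbf 1$ and type-$\mathbf 2$ bands contract at genuinely different rates), which is a real advantage since the abstract argument needs non-cohomology as an input anyway. Second, the ``principal obstacle'' you identify --- controlling the covering by Birkhoff sums of a H\"older cocycle without a smooth trace map --- is resolved in the paper not by producing a literal H\"older function but by working with \emph{almost-additive} potential sequences $\{\psi^{\ma}_n\}$ whose almost-additivity follows from the bounded covariation of band-length ratios (Proposition \ref{bco-cor}); the thermodynamic formalism then comes from the Barreira--Mummert/Barral--Qu/Feng--Huang theory for such sequences rather than classical Bowen theory. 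Your large-$\lambda$ expansion in (vi) likewise matches the paper's mechanism, where the locally constant function $f$ of \eqref{ff} plays the role of your cocycle $\Phi^{\ma}$, except that only two-sided $O(1)$ bounds relative to $\log\lambda$ (via $\tau_1,\tau_2\sim\lambda$) are available and needed, not an $O(1/\lambda)$ error.
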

	\begin{remark}
		\begin{enumerate}
			{\rm 
				\item The $C^1$-function $\p_{\ma}(s)$ mentioned above is the topological pressure (see \eqref{def-pressure}) defined by the potential function $s\Psi^{\ma}$ (see \eqref{def-psi}). Additionally, we can establish that the limits in \eqref{2} and \eqref{4} both exist (see Proposition \ref{trans-band}).
				
				\item The derivative of the pressure function $\p_{\ma}$ exhibits profound connections to the interrelations between $\gamma(\alpha,\lambda), d(\alpha,\lambda), D(\alpha,\lambda) $ and $\mathcal{T}^{\pm}(\alpha,\lambda)$. 
				%		The pressure function $\mathbf{P}$ is $C^1$ and strictly convex on $\R$ (see Proposition \ref{trans-band}), then 
				%			\begin{equation*}
					%				-\infty<\p'(-\infty)<\p'(0)<\p'(D)<\p'(\infty)<0,
					%			\end{equation*}
				%			and we can immediately establish \eqref{5} from the above inequalities. The inequality $d(\alpha,\lambda)<D(\alpha,\lambda)$ establishes a conjecture of Barry Simon, which was made based on an analogy with work of Makarov and Volberg \cite{Ma1998,Vo1993}. Note that there exists exactly one point of intersection of the tangent line with the $t$-axis, at the point ($t$, $\p(t)$). In particular, due to statement (i), $d(\alpha,\lambda)$ is given by the point of intersection of the
				%			tangent line to the graph of $\p(t)$ at the point $(0, \p(0))$ with the t-axis. Also,
				%			the line $\p(0)+t\p'(-\infty)$ intersects the $t$-axis at the point $\gamma(\alpha,\lambda)$ by statement (ii),
				%			and the line $\p(0)+t\p'(\infty)$ intersects the $t$-axis at the point $\mathcal{T}^{\pm}(\alpha,\lambda)$ by statement (iv). Finally, by Corollary \ref{pressure-continuous} (3), the graph of $\p(t)$ intersects the $t$-axis at the point $D(\alpha,\lambda)$. 
				These observations are illustrated in Figure \ref{type-evo}. 
				
				\item (i) and (ii) are new, but (iii) is known (see \cite[Theorem 1.2]{FLW2011}). Herein, we derive the explicit expression and asymptotic formula for the dimension of $\Sigma_{\alpha,\lambda}$. 
				\item 	Damanik et. al. \cite{DGLQ2015} obtained the lower and upper bound for all time-averaged transport exponents for any $\alpha\in\I$, and the part (iv) was presented in \cite[Proposition 4.8(c) and p. 1433]{DGLQ2015}, yet no formal proof is provided therein. The result is indeed anticipated, its demonstration is non-trivial and requires rigorous justification. We give a completed proof and obtain \eqref{4}, which shows that $\mathcal{T}^{\pm}(\alpha,\lambda)$ is an upper bound for the level set of the upper local dimension of the DOS $\mathcal{N}_{\alpha,\lambda}$.
				\item The inequality $d(\alpha,\lambda)<D(\alpha,\lambda)$ establishes a conjecture of Barry Simon, which was made based on an analogy with work of Makarov and Volberg \cite{Ma1998,Vo1993}. It is a widely held belief that if a planar set $E$ is dynamically defined, then the Hausdorff dimension of the harmonic measure determined by $E$ is strictly less than the Hausdorff dimension of $E$ (see the monograph \cite{GM2005}). Our results confirm this conjecture in the present special case.
				
				\item In \cite{Qu2016}, Qu used the asymptotic formulas to obtain \eqref{5}, which inherently precluded him from obtaining conclusions for the $\ma=2$ (in this case, $\rho_{\gamma}(\ma)=\rho_d(\ma)=\rho_D(\ma)$), and a necessitated sufficiently large coupling constant for the inequalities to hold. By contrast, we demonstrate that the inequalities are satisfied if $\lambda\geq240$, and we elucidate previously unclarified results for $\ma=2$.
				
				\item The equalities of \eqref{6} are new. For constant type $\alpha_{\kappa}:=[\kappa,\kappa,\cdots]$, the first equality of \eqref{6} was proved in \cite{Mu2019}; for eventually constant type $\alpha$, the equations of \eqref{6} were contained in \cite[Theorem 1 (iv)]{Qu2016}. In this paper, We generalize their conclusions and adopt a new proof approach.
				
				\item For any irrational $\alpha$, the first equality of \eqref{7}, see \cite{FLW2011,LQW2014}; for constant type $\alpha_{\kappa}$, the second equality of \eqref{7}, see \cite[Theorem 1.4]{DGLQ2015}.  We state them here for comparison. Our method gives a new proof for \eqref{7}.
				%the asymptotic formulas of \eqref{6} is
				
				\item  \eqref{8} shows that the four quantities $\gamma(\alpha,\lambda), d(\alpha,\lambda), D(\alpha,\lambda) $ and $  \mathcal{T}^{\pm}(\alpha,\lambda)$
				only depend on the “tail” of the expansion of $\alpha$ when $\alpha$ is of eventually periodic type.
			}
		\end{enumerate}
	\end{remark}
	\begin{figure}
		\includegraphics[scale=1]{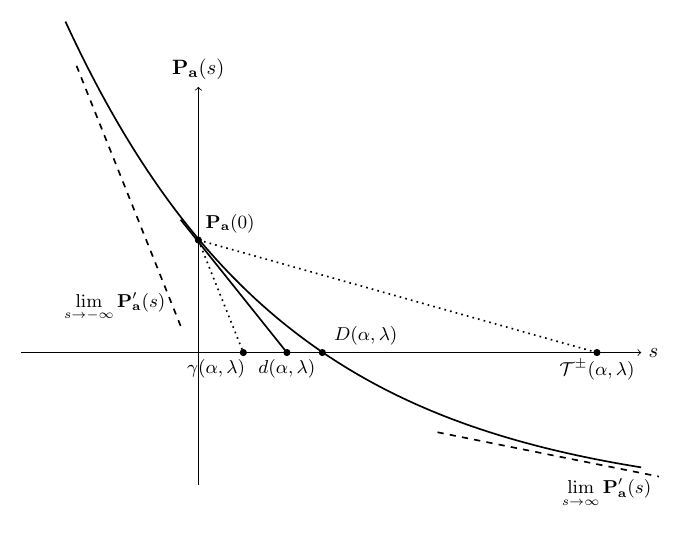}
		\caption{\footnotesize{The pressure function $\p_{\ma}(s)$ is strictly convex (if $\lambda\geq240$) and differentiable on $\mathbb{R}$, admitting a tangent line at each such point. Taking $s\rightarrow-\infty$, $s=0$ and $s\rightarrow\infty$ respectively gives the corresponding tangent lines; translating each tangent line horizontally to pass through the point $(0,\p_{\ma}(0))$, the intersection points of these translated tangent lines with the horizontal axis are exactly the values of the spectral characteristics $\gamma(\alpha,\lambda),d(\alpha,\lambda),\mathcal{T}^{\pm}(\alpha,\lambda) $.}}\label{type-evo}
	\end{figure}
	
	\subsection{Ideas of the proof}\
	
	Let us explain the idea of our proof. By constructing a bi-Lipschitz homeomorphism between the spectrum and the symbolic space (see Proposition \ref{bi-lip-alpha}), we can regard the spectrum as a kind of subshift of finite type, then we successfully transfer the spectral problem to a dynamical problem. We combine the tools from the thermodynamical formalism and multifractal analysis, and derive the desired result. We mainly illustrate how the pressure function relates to spectral characteristics.
	
	Recall that for each $\alpha\in\I$ and $\lambda>4$, Raymond  \cite{Ra1997} constructed a decreasing family $\{\B_n^\alpha:n\ge0\}$ of covers for $\Sigma_{\alpha,\lambda}$ (see Sect. \ref{sec-spectrum-1}).  
	Later in \cite{LQW2014,Qu2016,Qu2018},  a coding symbolic space $\Omega^\alpha$ for $\Sigma_{\alpha,\lambda}$  was given based on Raymond's construction (see Sect. \ref{sec-spectrum-2}). In this way, one can define a natural coding map 
	$\pi^\alpha_\lambda: \Omega^\alpha\to \Sigma_{\alpha,\lambda}$ as
	\begin{equation}
		\pi^{\alpha}_\lambda(x):=\bigcap_{n\ge 0} B_{x|_n}^{\alpha},\quad\text{where\ }B_{x|_n}^{\alpha}\in \B_n^\alpha\text{ is the spectral band}.
	\end{equation}
Through structural analysis of symbolic space $\Omega^{\alpha}$, we reveal a fundamental deficiency: the absence of an amenable dynamical system framework (that is $\Omega^{\alpha}$ is not invariant under the shift map, see Remark \ref{invarint}). Inspired by the works \cite{CQ2023,Qu2016}, for any periodic frequency $\alpha=[\overline{a_1,a_2,\cdots,a_k}]$, we can construct the new symbolic space $\Omega_{\ma}$ as the subshift of finite type. A simple but crucial observation is that there exists a natural bijection $\iota:\Omega_{\ma}\to\Omega^{\check{\alpha}}$, where $\check{\alpha}:=[1,\overline{a_1,a_2,\cdots,a_k}]$.  This means that $\Omega_{\ma}$ can code the spectrum $\Sigma_{\check{\alpha},\lambda}$ by coding map $\pi_{\ma,\lambda}=\pi^{\check{\alpha}}_{\lambda}\circ\iota$ (see Sect. \ref{sec-Sturm-1}). %(see Section \ref{sec-Omega-a}), where $\check{\alpha}=[1,\overline{a_1,a_2,\cdots,a_k}]$. %At first, we introduce potentials $\Phi$ and $\Psi^{\ma}$ ($\Phi$ is related to the density of states measure; $\Psi^{\ma}$ is related to the metric on $\Omega_{\ma}$).
	
	%	We now turn all our attention to the symbolic space $\Omega_{\mathbf{a}}$. 
	Our main aim is to study the spectral characteristics. Thus, we introduce an appropriate metric $d_{\ma}$ on $\Omega_{\ma}$ such that the coding map
	\begin{equation}\label{bi-lip-hom}
		\pi_{\ma,\lambda}:(\Omega_{\ma},d_{\ma})\to(\Sigma_{\check{\alpha},\lambda},|\cdot|)
	\end{equation}
	is a bi-Lipschitz homeomorphism (see Proposition \ref{bi-Lip}).
	For any $\mv$, $\mathbf{w}\in \Omega_{\ma}$, define
	\begin{equation*}
		d_{\ma}(\mv,\mathbf{w}):=|B^{\check{\alpha}}_{w}|,\ \ \ \text{if}\ w=\iota(\mv)\wedge\iota(\mathbf{w}),
	\end{equation*}
	where $\iota(\mv)\wedge\iota(\mathbf{w})$ denotes the common prefix of $\iota(\mv)$ and $\iota(\mathbf{w})$, and $|B^{\check{\alpha}}_w|$ is the length of the spectral band $B^{\check{\alpha}}_w$. 
	With this coding map $\pi_{\ma,\lambda}$, the DOS $\mathcal{N}_{\check{\alpha},\lambda}$ (supported on $\Sigma_{\check{\alpha},\lambda}$) is related to the maximal entropy measure $\mu_{\ma}$ on $\Omega_{\ma}$ (see Proposition \ref{mu-N}), %(see Proposition \ref{mu-N}), 
	then there exists a constant $C>1$ such that
	\begin{equation}\label{id-1}
		C^{-1}\mathcal{N}_{\check{\alpha},\lambda}\leq 	\mu_{\ma}\circ\pi^{-1}_{\ma,\lambda}\leq C\mathcal{N}_{\check{\alpha},\lambda}.
	\end{equation}
	Roughly speaking, the properties (exact-dimensional property or optimal H\"older exponent) of $\mu_{\ma}$ and $\mathcal{N}_{\check{\alpha},\lambda}$ are the same.
	
	Now we elaborate on the application of thermodynamic formalism within space $\Omega_{\ma}$. Note that the metric $d_{\ma}$ is related to the geometric potential $\Psi^{\ma}=\{\psi^{\ma}_n:n\geq1\}$, where $\psi^{\ma}_n(\mv)=\log|B^{\check{\alpha}}_{\iota(\mv)|_{nk+1}}|$. This kind of metric is also called a weak-Gibbs metric on $\Omega_{\ma}$, which has been used in \cite{Qu2016,Qu2018}. It is natural to define the pressure function
	\begin{equation*}
		\p_{\ma}(s):=\lim\limits_{n\to\infty}\frac{1}{n}\log\sum_{|\mv|=n}\exp(\sup_{x\in[\mv]}s\psi^{\ma}_{n}(x))=\lim\limits_{n\to\infty}\frac{1}{n}\log\sum_{B\in\B^{\check{\alpha}}_{nk+1}}|B|^s.
	\end{equation*}
	Next, we introduce how the derivative of the pressure function $\p_{\ma}$ exhibits profound connections to the interrelations between $\gamma(\alpha,\lambda), d(\alpha,\lambda), D(\alpha,\lambda) $ and $\mathcal{T}^{\pm}(\alpha,\lambda)$. So we prove the crucial result (see Proposition \ref{trans-band}): $\p_{\ma}$ is $C^1$ on $\R$ and the following limits exist
	\begin{equation}\label{id-2}
		\begin{cases}
			\lim\limits_{s\to-\infty}\p'_{\ma}(s)=\lim\limits_{n\to\infty}\frac{1}{n}\inf_{\mv\in\Omega_{\ma}}\psi_{n}^{\ma}(\mv)
=\lim\limits_{n\to\infty}\frac{1}{n}\inf_{B\in\B^{\check{\alpha}}_{nk+1}}\log|B|,\\
			\p'_{\ma}(s)=\lim\limits_{n\to\infty}\int\psi^{\ma}_n\mathrm{d}\mu^{\ma}_{s}
=\lim\limits_{n\to\infty}\frac{1}{n}\log|B(s)|,\quad\text{for some $B(s)\in\B^{\check{\alpha}}_{nk+1}$},\\
			\lim\limits_{s\to\infty}\p'_{\ma}(s)=\lim\limits_{n\to\infty}\frac{1}{n}
\sup_{\mv\in\Omega_{\ma}}\psi_{n}^{\ma}(\mv)=\lim\limits_{n\to\infty}\frac{1}{n}\sup_{B\in\B^{\check{\alpha}}_{nk+1}}\log|B|,
		\end{cases}
	\end{equation}
	where $\mu^{\ma}_{s}$ is the Gibbs measure related to $s\Psi^{\ma}$. By the classical Bowen's formula and \eqref{bi-lip-hom}, we have that $\dim_{H}\Omega_{\ma}=\dim_{H}\Sigma_{\check{\alpha},\lambda}=D(\check{\alpha},\lambda)$ and $D(\check{\alpha},\lambda)=s$ is the zero of $\p_{\ma}(s)=0$. Therefore, there exists a constant $s_{\ma}>0$ such that $$
	D(\check{\alpha},\lambda)=-\frac{\p_{\ma}(0)}{\p'_{\ma}(s_{\ma})}.$$
	By \cite[Proposition 4.8(c) and Proposition 5.1]{DGLQ2015}(it connects the  longest spectral band and the transport exponents), we conclude that
	\begin{equation*}
		\mathcal{T}^{\pm}(\check{\alpha},\lambda)=\lim\limits_{n\to\infty}\frac{n\p_{\ma}(0)}{\sup_{B\in\B^{\check{\alpha}}_{nk+1}}\log|B|}.
	\end{equation*}
	
	Note that the maximal entropy measure $\mu_{\ma}$ on $\Omega_{\ma}$ is a Gibbs measure for potential $\Phi=\{-n\p_{\ma}(0),n\geq1\}$. Moreover, we have that $\mu^{\ma}_0=\mu_{\ma}$. To study the spectrum $\Sigma_{\check{\alpha},\lambda}$ and the DOS $\mathcal{N}_{\check{\alpha},\lambda}$, we only need to analyze two almost additive potentials $\Phi$ and $\Psi^{\ma}$ on $\Omega_{\ma}$. Then we can use some results for the multifractal analysis of quotients of almost additive potentials on subshifts of finite type (see for example \cite{BQ2012,FH2010}). By \eqref{id-1} and \eqref{def-loc-dim}, one can compute the local dimension of $\mathcal{N}_{\check{\alpha},\lambda}$. For  any $x\in\Sigma_{\check{\alpha},\lambda}$, $\mv=\pi_{\ma,\lambda}^{-1}(x)\in\Omega_{\ma}$ and
	\begin{equation}\label{id-3}
		\left\{ \begin{aligned}
			\underline{d}_{\mathcal{N}_{\check{\alpha},\lambda}}(x)&=	\underline{d}_{\mu_{\ma}}(\mv)=\frac{-\p_{\ma}(0)}{\liminf\limits_{n\to\infty}\psi^{\ma}_n(\mv)/n};\\
			\overline{d}_{\mathcal{N}_{\check{\alpha},\lambda}}(x)&=\overline{d}_{\mu_{\ma}}(\mv)=\frac{-\p_{\ma}(0)}{\limsup\limits_{n\to\infty}\psi^{\ma}_n(\mv)/n}.
		\end{aligned}
		\right. 
	\end{equation}
	According to \eqref{id-2} and \eqref{id-3}, we obtain that 
	\begin{equation*}
		\gamma(\check{\alpha},\lambda)=\frac{-\p_{\ma}(0)}{\lim\limits_{s\to-\infty}\p'_{\ma}(s)},\ d(\check{\alpha},\lambda)=\frac{-\p_{\ma}(0)}{\p'(0)},\
		D(\check{\alpha},\lambda)=\frac{-\p_{\ma}(0)}{\p'_{\ma}(s_{\ma})},\  \mathcal{T}^{\pm}(\check{\alpha},\lambda)=\frac{-\p_{\ma}(0)}{\lim\limits_{s\to\infty}\p'_{\ma}(s)}.
	\end{equation*}
	From the ``tail property" of the spectral quantities (see Proposition \ref{geo-lem-tail}), we can relate the spectral characteristics $\gamma(\check{\alpha},\lambda)$ ($d(\check{\alpha},\lambda), D(\check{\alpha},\lambda)$ or $ \mathcal{T}^{\pm}(\check{\alpha},\lambda))$ to that of $\gamma(\alpha,\lambda)$ ($d(\alpha,\lambda), D(\alpha,\lambda)$ or $\mathcal{T}^{\pm}(\alpha,\lambda))$ and conclude the proof.
	
	Finally, we briefly illustrate how to derive the asymptotic behaviors and strict inequalities of spectral characteristics. According to the preceding analysis (see equations \eqref{id-2}), it suffices to verify whether spectral bands can be expressed as certain exponential powers of $\lambda$. For instance, if
	$$
	\lim_{n\to\infty}\frac{1}{n}\inf_{B\in\mathcal{B}^{\check{\alpha}}_{nk+1}}\log|B|\to\frac{\kappa_1}{\log\lambda},
	\quad
	\lim_{n\to\infty}\frac{1}{n}\sup_{B\in\mathcal{B}^{\check{\alpha}}_{nk+1}}\log|B|\to\frac{\kappa_2}{\log\lambda}
	\quad\text{as}\ \lambda\to\infty,
	$$
	then $\kappa_1$ is associated with the H\"older exponent, while $\kappa_2$ corresponds to the transport exponent. Consequently, this result follows directly from Lemma \ref{esti-band-length}. Similarly, to establish the strict inequalities for these spectral characteristics, it suffices to verify that the exponents of $\lambda$ corresponding to distinct spectral bands are not identical.
	
	\medskip
	The rest of the paper is organized as follows. In Sect. \ref{sec-TF}, we recall some fundamental results in thermodynamical formalism. In Sect. \ref{sec-spectrum}, we discuss the structure of the spectrum and its coding. In Sect. \ref{sec-Sturm}, we connect the Sturm Hamiltonian with thermodynamical formalism, and apply the relevant results developed in Sect. \ref{sec-TF} to the spectral analysis of the Sturm Hamiltonian. In Sect. \ref{sec-main}, we prove the Theorem \ref{main-result}. We give the proof of the technical lemma in Sect. \ref{imp-lemma}. In Appendix \ref{exp}, we briefly give a refined estimate of the spectral band for larger coupling constant ($\lambda\geq240$).
	
	{\bf Notations.} In this paper, we use $\lhd$ and $\rhd$ to indicate the beginning and end of the proof of a claim. For two positive sequences $\{a_n:n\in \N\}$ and $\{b_n:n\in \N\}$, the notation $
	a_n\sim b_n$ means that there exists a constant $C>1$ such that $C^{-1}b_n\leq a_n\le C b_n$ for all $n$. Assume $X$ is a metric space and $\mu,\nu$ are two finite Borel measures on $X$. We write $
	\mu\asymp \nu$, if there exists a constant $C>1$ such that $C^{-1}\nu(B)\le\mu(B)\le C\nu(B)$ for any Borel set $B\subset X$. 
	\section{Relevant facts about thermodynamical formalism and multifractal analysis}\label{sec-TF}
	
	Let us begin with some notations and background. We say that $(X,T)$ is a
	topological dynamical system (TDS) if $X$ is a compact metric space and $T:X\to X$ is a
	continuous map. Denote the set of all $T$-invariant probability measures supported on $X$ by $\mathcal{M}(X,T)$. 
	
	Assume $(X,T)$ is a TDS and $\Phi=\{\phi_n: n\geq1\}$ is a family of continuous
	functions from $X$ to $\R$. We call $\Phi$ a potential on $X$. If there exists a constant $%
	C(\Phi)\geq0$ such that 
	\begin{equation*}\label{almost-additive}
		|\phi_{n+m}(x)-\phi_{n}(x)-\phi_{m}(T^nx)|\leq C(\Phi),\ \ \ \forall\ n,m\in\N,
	\end{equation*}
	then we say  that $\Phi$ is {\it almost\ additive} and write $\Phi\in C_{aa}(X,T)$, where $C_{aa}(X,T)$ denotes the set of all the almost additive potentials defined on $X$. 
	
	Given $\Phi\in C_{aa}(X,T)$, if there exists a constant $c>0$ such that $%
	\phi_{n}(x)\leq -cn$ for any $n\geq1$ and $\phi_n$ is decreasing for $n$, then we say that $\Phi$
	is negative and write $\Phi\in C_{aa}^-(X,T)$. In this case, there exist two constants $0<c_1\leq c_2$ such that
	\begin{equation}\label{low-psi-upp}
		-c_2n\leq \phi_n(x)\leq-c_1n,\ \ \ \forall\ n\in\N.
	\end{equation}
	
	Given $\Phi\in C_{aa}(X,T)$, by subadditivity, the limit $\Phi_*(\mu):=\lim\limits_{n\to\infty}\frac{1}{n}\int_X\phi_nd\mu$ exists for every $\mu\in\mathcal{M}(X,T)$. Note that if $\Phi$ is negative, then $\Phi_*(\mu)<0$.
	
	Let $(\Sigma_A,T)$ be a topologically mixing subshift of finite type.
	We say $\Phi\in C_{aa}(\Sigma_A,T)$ has bounded variation property, if there
	exists a constant $D(\Phi)\geq0$ such that 
	\begin{equation*}\label{bd-variation}
		\sup\left\{|\phi_n(x)-\phi_n(y)|: x|_n=y|_n,x,y\in\Sigma_{A}\right\}\leq D(\Phi), \ \ \ \forall\ n\in\N, 
	\end{equation*}
	where $x|_n:=x_{1}x_{2}\cdots x_{n}$ is the $n$-th prefix of $x$. Define
	\begin{equation*}
		\begin{cases}
			\mathcal{F}(\Sigma_A,T):=\{\Phi\in C_{aa}(\Sigma_A,T):\Phi\ \text{has bounded variation}\},\\
			\mathcal{F}^-(\Sigma_A,T):=\{\Phi\in \mathcal{F}(\Sigma_A,T):\Phi\ \text{is negative}\}.
		\end{cases}
	\end{equation*}
	
	Given $\Psi=\{\psi_{n}:n\geq1\}\in\mathcal{F}^-(\Sigma_A,T)$, define 
	a weak-Gibbs metric $d_{\Psi}$ on $(\Sigma_A,T)$ as 
	\begin{equation}\label{weak-gibbs-metric}
		d_{\Psi}(x,y):=\sup_{z\in[x\wedge y]}\exp(\psi_{|x\wedge y|}(z))	\footnote{We adopt the conventions $|\emptyset|=0$ and $\psi_0\equiv0$.},
	\end{equation}
	where $x\wedge y$ denotes the maximal common prefix of $x$ and $y$, $|w|$ denotes the length of the word $w$, and $[x|_n]:=\{y\in\Sigma_{A}:y|_n=x|_n\}$ denotes the cylinder set. This kind of metric is considered in \cite{BQ2012,GP1997,KS2004,Qu2016}. In this section, we always endow $(\Sigma_{A},T)$ with the metric $d_{\Psi}$ without further mention.
	\subsection{Thermodynamical formalism}
	
	\begin{theorem}[\cite{Ba2006,Mu2006}]\label{variation priciple}
		Assume $\Phi\in\mathcal{F}(\Sigma_A,T)$, then the following results hold:
		
		(1) The limit
		\begin{equation*}\label{def-pre}
			P_{top}(\Phi):=\lim\limits_{n\to\infty}\frac{1}{n}\log\sum_{|w|=n}\exp(\sup_{x\in[w]}\phi_n(x))
		\end{equation*}
		exists, and is called the topological pressure of $\Phi$%, where $[w]$ is the cylinder set of rank $n$ containing $w$
		. Moreover, the variational principle holds:
		\begin{equation}\label{var-pri}
			P_{top}(\Phi)=\sup\{h_{T}(\mu)+\Phi_*(\mu):\mu\in\mathcal{M}(\Sigma_A,T)\}.
		\end{equation}
		
		(2) There exists an ergodic measure $\mu_{\Phi}\in\mathcal{M}(\Sigma_A,T)$ such that
		\begin{equation*}
			C^{-1}\leq\frac{\mu_{\Phi}([x|_n])}{\exp(-nP_{top}(\Phi)+\phi_n(x))}\leq C,\quad\forall\ x\in\Sigma_A,n\in\N.
		\end{equation*}
		$\mu_{\Phi}$ is called the Gibbs measure related to $\Phi$. Moreover, $\mu_{\Phi}$ is the unique invariant measure which attains the supremum of \eqref{var-pri}.
	\end{theorem}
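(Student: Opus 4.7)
\emph{Proof proposal.} My plan is to adapt Bowen's classical argument for additive Hölder potentials to the almost-additive bounded-variation setting, in the spirit of Barreira and Mummert; the two constants $C_{aa}(\Phi)$ and $C_{bv}(\Phi)$ will play the role traditionally reserved for Hölder regularity and absorb all distortion estimates.

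\emph{Existence of $P(\Phi)$.} Set $Z_n(\Phi):=\sum_{|w|=n}\exp(\sup_{x\in[w]}\phi_n(x))$. Bounded variation makes the choice of reference point in each cylinder irrelevant up to a uniform constant $e^{C_{bv}(\Phi)}$. Because $(\Sigma_A,T)$ is topologically mixing, there exists $N_0\in\N$ such that for admissible words $w$ of length $n$ and $v$ of length $m$ one can insert a connector $u$ with $|u|\le N_0$ making $wuv$ admissible; combined with the almost-additivity estimate $|\phi_{n+|u|+m}-\phi_n-\phi_m\circ T^{n+|u|}|\le 2C_{aa}(\Phi)$, this yields a constant $K>0$ such that $Z_{n+m}\le K\,Z_n Z_m$. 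Fekete's lemma applied to $\log(K Z_n)$ then produces the limit $P(\Phi)=\lim_{n\to\infty}\tfrac{1}{n}\log Z_n$.

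\emph{Variational principle and construction of the equilibrium state.} For the upper bound, fix $\mu\in\mathcal{M}(\Sigma_A,T)$, partition $\Sigma_A$ into $n$-cylinders, and apply the elementary inequality $\sum_i p_i(a_i-\log p_i)\le \log\sum_i e^{a_i}$ with $a_i=\sup_{[w]}\phi_n$ and $p_i=\mu([w])$; dividing by $n$, letting $n\to\infty$, and using $h_T(\mu)=\lim\tfrac{1}{n}H_\mu(\mathcal{P}_n)$ together with $\Phi_*(\mu)=\lim\tfrac{1}{n}\int\phi_n\,d\mu$, one obtains $h_T(\mu)+\Phi_*(\mu)\le P(\Phi)$. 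For the matching lower bound, define the atomic probability measures $\nu_n:=Z_n(\Phi)^{-1}\sum_{|w|=n}\exp(\phi_n(x_w))\delta_{x_w}$ for chosen representatives $x_w\in[w]$, and their Birkhoff averages $\mu_n:=\tfrac{1}{n}\sum_{k=0}^{n-1}T^k_*\nu_n$; any weak-$*$ accumulation point $\mu_\Phi$ is $T$-invariant, and a Misiurewicz-style entropy estimate combined with almost additivity yields $h_T(\mu_\Phi)+\Phi_*(\mu_\Phi)\ge P(\Phi)$, proving the variational principle \eqref{var-pri} and identifying $\mu_\Phi$ as an equilibrium state.

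\emph{Gibbs property, ergodicity, uniqueness.} From the very construction, $\nu_n([w])\asymp e^{-nP(\Phi)+\phi_n(x)}$ for $x\in[w]$. Propagating this estimate to $\mu_\Phi$ is the main technical obstacle: the Cesàro averaging mixes contributions from cylinders of differing shapes, so one must decompose $T^k_*\nu_n([w])$ into boundary terms of controlled size plus a bulk term comparable to $e^{-nP(\Phi)+\phi_n(x)}$, absorbing the constants $C_{aa}(\Phi)$, $C_{bv}(\Phi)$ and $N_0$ uniformly into a single constant $C$. Once the two-sided Gibbs bound is established for $\mu_\Phi$, any two equilibrium states satisfy the same estimate and are therefore mutually absolutely continuous with a uniformly bounded Radon--Nikodym derivative; a standard ergodic-decomposition argument then forces them to coincide, yielding simultaneously uniqueness in \eqref{var-pri} and ergodicity of $\mu_\Phi$.
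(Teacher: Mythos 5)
First, note that the paper does not prove this theorem at all: it is quoted verbatim from the almost-additive thermodynamic formalism of Barreira and Mummert (\cite{Ba2006,Mu2006}), so your sketch can only be measured against that literature, whose overall architecture (Fekete for existence, entropy inequality plus a Misiurewicz-type construction for the variational principle, a Gibbs estimate for uniqueness and ergodicity) you do reproduce correctly in outline.

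There are, however, genuine gaps at the points that carry the real weight. The central missing lemma is the two-sided quasi-multiplicativity of the partition functions: you invoke the mixing connector to get $Z_{n+m}\le K Z_n Z_m$, but submultiplicativity needs no connector at all (it follows from $\phi_{n+m}\le\phi_n+\phi_m\circ T^n+C_{aa}(\Phi)$ and bounded variation); what the connector actually buys, and what you never state, is the reverse bound $Z_nZ_m\le K' Z_{n+m+N_0}$. Without that supermultiplicative half you cannot show $Z_n\asymp e^{nP(\Phi)}$, and the entire Gibbs estimate collapses. Second, the step you yourself call ``the main technical obstacle'' --- transferring the Gibbs bound from $\nu_n$ to the invariant Cesàro limit $\mu_\Phi$ --- is described but not performed; the decomposition of $T^k_*\nu_n([w])$ into bulk and boundary terms does work, but only because the bulk term is controlled precisely by the quasi-multiplicativity you omitted, so as written the argument is circularly incomplete. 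Third, your uniqueness argument assumes that \emph{every} equilibrium state satisfies the Gibbs bound, which is exactly what is not known a priori; the standard route (Bowen, adapted by Barreira--Mummert) instead compares an arbitrary equilibrium state $\mu$ against the already-constructed Gibbs measure $\mu_\Phi$ via a strict-convexity/entropy argument to conclude $\mu\ll\mu_\Phi$, and only then invokes invariance and the ergodicity of $\mu_\Phi$ (itself derived from the Gibbs property together with the connector estimate, not from a bare ``ergodic-decomposition argument''). Since the paper simply cites the result, the cleanest fix is to do the same; if you want a self-contained proof, the quasi-multiplicativity lemma must be stated and proved first, as everything downstream hangs on it.
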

	If $\mu\in\mathcal{M}(\Sigma_A,T)$ satisfies $P_{top}(\Phi)=h_{T}(\mu)+\Phi_*(\mu)$, then $\mu$ is called an equilibrium state of $\Phi$. Furthermore, this equilibrium state is unique. 
	
	The following corollary is a standard result directly verifiable through the variational principle; its proof is consequently omitted (see for example \cite{Ba1996}).
	\begin{corollary}\label{pressure-continuous}
		Fix $\Psi\in\mathcal{F}^-(\Sigma_{A},T)$. The following results hold:
		
		(1) For any $\Phi\in\mathcal{F}(\Sigma_{A},T)$, the function $s\mapsto P_{top}(\Phi+s\Psi)$ is convex, strictly decreasing on $\R$ and
		\begin{equation*}
			\lim\limits_{s\to-\infty}P_{top}(\Phi+s\Psi)=\infty;\ \ \  \ \ \ \lim\limits_{s\to\infty}P_{top}(\Phi+s\Psi)=-\infty.
		\end{equation*}
		Thus $P_{top}(\Phi+s\Psi)=0$ has a unique solution.
		
		(2) The topological entropy $h_{top}(T)=P_{top}(0\cdot\Psi)$ and the potential $\mathbf{0}=0\cdot\Psi$ admits a Gibbs measure $\mu_{\max}$ with $$\mu_{\max}([\mw|_n])\sim\exp(-nP_{top}(\mathbf{0})),\quad\forall\ x\in\Sigma_{A},n\in\N.$$
		Moreover, $h_{T}(\mu_{\max})=\sup\{h_{T}(\mu):\mu\in\mathcal{M}(\Sigma_A,T)\}=h_{top}(T)$, and $\mu_{\max}$ is called the maximal entropy measure.
		
		(3) Bowen's formula holds: $\dim_H\Sigma_A=s_{\Psi}$, where $s_{\Psi}$ is the zero of $P_{top}(s\Psi)=0$.
	\end{corollary}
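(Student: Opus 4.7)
The plan is to derive all three parts from the variational principle of Theorem~\ref{variation priciple} together with the negativity bound \eqref{low-psi-upp} for $\Psi$.

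For part~(1), convexity of $s\mapsto P(\Phi+s\Psi)$ is immediate from the variational principle
\begin{equation*}
P(\Phi+s\Psi)\;=\;\sup_{\mu\in\mathcal{M}(\Sigma_A,T)}\{h_T(\mu)+\Phi_*(\mu)+s\Psi_*(\mu)\},
\end{equation*}
since a supremum of affine functions in $s$ is convex. To obtain strict monotonicity, I would fix $s_1<s_2$, let $\mu_{s_2}$ be the equilibrium state for $\Phi+s_2\Psi$ granted by Theorem~\ref{variation priciple}(2), and test $\mu_{s_2}$ at parameter $s_1$, yielding
\begin{equation*}
P(\Phi+s_1\Psi)\;\geq\;h_T(\mu_{s_2})+\Phi_*(\mu_{s_2})+s_1\Psi_*(\mu_{s_2})\;=\;P(\Phi+s_2\Psi)+(s_1-s_2)\Psi_*(\mu_{s_2}).
\end{equation*}
The bound \eqref{low-psi-upp} forces $\Psi_*(\mu_{s_2})\leq -c_1<0$, so the last term is strictly positive and the pressure strictly decreases. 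For the limits, testing any fixed $\mu$ in the variational principle gives $P(\Phi+s\Psi)\geq h_T(\mu)+\Phi_*(\mu)+s\Psi_*(\mu)\to+\infty$ as $s\to-\infty$, while the uniform bound $\Psi_*(\mu)\leq -c_1$ combined with $h_T(\mu)+\Phi_*(\mu)\leq h_{top}(T)+c_2$ gives $P(\Phi+s\Psi)\leq h_{top}(T)+c_2-c_1 s\to-\infty$ as $s\to+\infty$. Convexity on $\R$ implies continuity, and together with strict monotonicity this produces a unique zero.

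Part~(2) is then immediate: specializing to $\Phi=\Psi=\mathbf{0}$ the variational principle collapses to $P(\mathbf{0})=\sup_\mu h_T(\mu)=h_{top}(T)$, and the Gibbs bound of Theorem~\ref{variation priciple}(2) reduces to $\mu_{\max}([w|_n])\sim\exp(-nP(\mathbf{0}))$ because the zero potential vanishes identically. The uniqueness of the equilibrium state then forces $\mu_{\max}$ to be the unique measure of maximal entropy.

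For part~(3) I would execute the standard proof of Bowen's formula inside the weak Gibbs metric \eqref{weak-gibbs-metric}. Bounded variation of $\Psi$ yields $\mathrm{diam}([w])\asymp\exp(\psi_n(x_w))$ for every $|w|=n$ and every $x_w\in[w]$, hence
\begin{equation*}
\sum_{|w|=n}\mathrm{diam}([w])^s\;\asymp\;\sum_{|w|=n}\exp(s\psi_n(x_w))\;\asymp\;\exp(nP(s\Psi)).
\end{equation*}
If $s>s_\Psi$ then $P(s\Psi)<0$, the series over $n$ is summable, and covering $\Sigma_A$ by cylinders yields $\mathcal{H}^s(\Sigma_A)<\infty$, so $\dim_H\Sigma_A\leq s$. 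If $s<s_\Psi$ then $P(s\Psi)>0$, and the Gibbs measure $\mu_{s\Psi}$ satisfies $\mu_{s\Psi}([w])\sim\exp(-nP(s\Psi))\exp(s\psi_n(x_w))$; reading this through $d_\Psi$ gives a ball estimate $\mu_{s\Psi}(B(x,r))\lesssim r^s$, and the mass distribution principle gives $\dim_H\Sigma_A\geq s$. Letting $s\downarrow s_\Psi$ and $s\uparrow s_\Psi$ closes Bowen's formula.

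The main obstacle is the lower bound in part~(3): converting the symbolic Gibbs estimate on cylinders into a genuine ball estimate in the weak Gibbs metric requires verifying that each $d_\Psi$-ball is sandwiched between a controlled number of cylinders of comparable diameter, which relies on the explicit geometry induced by \eqref{weak-gibbs-metric}. The remaining assertions are straightforward manipulations of the variational principle and the uniqueness of the equilibrium state.
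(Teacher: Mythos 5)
Your argument is correct and is exactly the standard variational-principle route that the paper itself invokes when it omits the proof of this corollary (citing it as standard); parts (1) and (2) are verbatim the intended derivation, and part (3) is the usual covering/mass-distribution proof of Bowen's formula. The one step you flag as delicate — sandwiching a $d_\Psi$-ball between cylinders — is resolved exactly as the paper does in Lemma~\ref{mu-phi-loc}, where $\exp(\psi_n(\mw))<r\leq\exp(\psi_{n-1}(\mw))$ gives $[\mw|_n]\subset B(\mw,r)\subset[\mw|_{n-1}]$, so there is no gap.
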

	\subsection{Analysis of the pressure function}\
	
	In this subsection, we will show that the pressure function $P_{top}(s\Psi)$ is $C^1$ on $\R$. This property plays a crucial role in the subsequent analysis of this paper.
	
	\begin{proposition}\label{relatived-pressure-diff}
		Assume $\Psi=\{\psi_{n}:n\geq1\}\in\mathcal{F}^-(\Sigma_{A},T)$. Then the following hold:
		
		(1) For any $s\in\R$, $s\Psi\in\mathcal{F}^-(\Sigma_{A},T)$ admits a Gibbs measure $\mu_s$. 
		
		(2) $P(s):=P_{top}(s\Psi)$ is $C^1$ and convex on $\R$, and there exist $0<c_1\leq c_2$ such that 
		\begin{equation}\label{diff-Ly}
			-c_2\leq P^{\prime }(s)=\Psi_*(\mu_s)\leq-c_1,\ \forall\ s\in%
			{\mathbb{R}}.
		\end{equation}
		
		(3) Furthermore, if the potential $\Psi$ satisfies
		\begin{equation}\label{contract}
			\sup_{n\geq1}\sup\{|\psi_{n}(x)-\psi_{n}(y)|:x,y\in\Sigma_{A}\}=\infty,
		\end{equation}
		then $\mu_{s}\neq\mu_{t}$ for any $s\neq t$ and $P$ is strictly convex on $\R$.
	\end{proposition}
	\begin{proof}
		(1) It is seen that $s\Psi\in\mathcal{F}^-(\Sigma_{A},T)$ with $C(s\Psi)=|s|C(\Psi), D(s\Psi)=|s|D(\Psi)$ for any $s\in\R$. Then by Theorem \ref{variation priciple} (2), the potential $s\Psi$ admits a Gibbs measure $\mu_s$. 
		
		(2) We follow partially arguments in \cite[Theorem 2]{BD2009}. Since $\Psi\in\mathcal{F}^-(\Sigma_A,T)$ and by \eqref{low-psi-upp}, there exist two constants $0< c_1\leq c_2$ such that 
		\begin{equation}\label{Psi-bd}
			-c_2\leq\Psi_*(\mu)=\lim\limits_{n\to\infty}\frac{1}{n}\int\psi_n
 \mathrm{d}\mu\leq-c_1,\ \ \ \forall\  \mu\in\mathcal{M}(\Sigma_A,T).
		\end{equation}
		
		Using variational principle (see \eqref{var-pri}) and Theorem \ref{variation priciple} (2), we have 
		\begin{align*}
			\begin{cases}
				P(s)-P(t)\geq h_T(\mu_t)+(s\Psi)_*(\mu_t)-(h_{T}(\mu_t)+\left(t\Psi)_*(\mu_t)\right)=(s-t)\Psi_*(\mu_t),\\
				P(s)-P(t)\leq h_T(\mu_s)+(s\Psi)_*(\mu_s)-(h_{T}(\mu_s)+\left(t\Psi)_*(\mu_s)\right)=(s-t)\Psi_*(\mu_s).
			\end{cases}
		\end{align*}
		This yields the following inequalities 
		\begin{equation}\label{relatived-diff}
			\left\{\begin{aligned}
				\Psi_*(\mu_s)&\leq\frac{P(s)-P(t)}{s-t}\leq\Psi_*(\mu_t),\ \ \text{if }
				\  t>s,\\ 
				\Psi_*(\mu_s)&\geq\frac{P(s)-P(t)}{s-t}\geq\Psi_*(\mu_t),\ \
				\text{if } \  t<s. \end{aligned}\right.
		\end{equation}
		
		\noindent\textbf{Claim:} If $\mu_{t_n}\to\mu$ for
		some sequence $t_n\to s$, then $\mu\equiv\mu_s$. 
		
		\noindent $\lhd $	Note that the entropy function $\mu\mapsto h_{T}(\mu)$ is upper semicontinuous (see for example \cite{Wa1982}) and the function $\mu\mapsto\Psi_*(\mu)$ is continuous (see \cite[equation (36)]{Ba2006}). Then the function 
		$
		\mu\mapsto h_{T}(\mu)+s\Psi_*(\mu)
		$
		is also upper semicontinuous. If $t_n\to s$, by Theorem \ref{variation priciple}, Corollary \ref{pressure-continuous} (1) and \eqref{Psi-bd}, we have 
		\begin{align*}
			h_{T}(\mu)+s\Psi_*(\mu)&\geq\limsup_{n\to\infty}\Big(h_T(\mu_{t_n})+s
			\Psi_*(\mu_{t_n})\Big)\notag \\
			&=\limsup_{n\to\infty}\Big(P(t_n)+(s-t_n)\Psi_*(\mu_{t_n})\Big)\notag\\
			&=P(s)+\limsup_{n\to\infty}(s-t_n)\Psi_*(\mu_{t_n})=P(s).
		\end{align*}
		This combines with variational principle yields the identity 
		\begin{align*}
			h_{T}(\mu)+s\Psi_*(\mu)=P(s).
		\end{align*}
		Hence $\mu$ must be an equilibrium measure of $s\Psi$. By the uniqueness of the equilibrium measure, hence $\mu\equiv\mu_s$, this establishes the statement.
		\hfill $\rhd$
		
		Now we prove that $P$ is $C^1$ and convex on $\R$, and inequality \eqref{diff-Ly} holds. By the Claim as above, we have $\mu_{t}\to\mu_s$ when $t\to s$. By equations \eqref{Psi-bd} and \eqref{relatived-diff}, we conclude that 
		\begin{equation*}
			-c_2\leq P^{\prime }(s)=\lim_{t\to s}\frac{P(s)-P(t)}{s-t}=\Psi_*(\mu_s)\leq-c_1. \label{qpr}
		\end{equation*}
		Thus $P$ is differentiable on ${\mathbb{R}}$ and \eqref{diff-Ly} holds. By Corollary \ref{pressure-continuous} (1), $P(s)=P_{top}(s\Psi)$ is convex on ${\mathbb{R}}$, we conclude that $P$ is $C^1$ on ${\mathbb{R}}$.
		
		(3) At first, we show that $\mu_{s}\neq\mu_{t}$ for any $s\neq t$. We proceed directly by contradiction. Fix $s\in\R$. Assume that there exists a constant $t\neq s$ such that $\mu_{t}=\mu_{s}$. By the definition of Gibbs measure, for any $x\in\Sigma_{A},n\in\N$, we have
		\begin{align*}
			\frac{\exp(t\psi_{n}(x))}{\exp(nP(t))}\sim\mu_{t}([x|_n])=\mu_{s}([x|_n])\sim\frac{\exp(s\psi_{n}(x))}{\exp(nP(s))}.
		\end{align*}
		This implies that for any $x,y\in\Sigma_{A}$ and $n\in\N$,
		\begin{align*}
			\exp((t-s)\psi_{n}(x))\sim\exp(n(P(t)-P(s)))\sim\exp((t-s)\psi_{n}(y)).
		\end{align*}	
		Now we conclude that 
		\begin{equation*}
			\sup_{n\geq1}\sup\{\exp((t-s)(\psi_{n}(x)-\psi_{n}(y))):x,y\in\Sigma_{A}\}\sim1.
		\end{equation*}
		This contradicts with \eqref{contract}. So, we have that $\mu_{t}\neq\mu_{s}$ for all $t\neq s$.
		
		Now we prove that $P$ is strictly convex on ${\mathbb{R}}$. Since $\mu_{t}\neq\mu_{s}$ for any $s\neq t$, then by variational principle and Theorem \ref{variation priciple} (2), we have 
		\begin{align*}
			\begin{cases}
				P(s)-P(t)> h_T(\mu_t)+(s\Psi)_*(\mu_t)-(h_{T}(\mu_t)+\left(t\Psi)_*(\mu_t)\right)=(s-t)\Psi_*(\mu_t),\\
				P(s)-P(t)< h_T(\mu_s)+(s\Psi)_*(\mu_s)-(h_{T}(\mu_s)+\left(t\Psi)_*(\mu_s)\right)=(s-t)\Psi_*(\mu_s).
			\end{cases}
		\end{align*}
		Combining with \eqref{diff-Ly}, this yields the following inequality 
		\begin{equation*}
			P'(s)=\Psi_*(\mu_s)<\frac{P(s)-P(t)}{s-t}<\Psi_*(\mu_t)=P'(t),\quad\forall\ t>s.
		\end{equation*}
		This implies that $P^{\prime }$ is strictly increasing. Consequently, $P$ is strictly convex on ${\mathbb{R}}$. 
	\end{proof}
	By Proposition \ref{relatived-pressure-diff} (2), the following two limits exist: 
	\begin{equation*}\label{lya-infty}
		P^{\prime }(-\infty):=\lim\limits_{s\to-\infty}P^{\prime }(s),\ \ \ \ \
		P^{\prime }(\infty):=\lim\limits_{s\to+\infty}P^{\prime }(s).
	\end{equation*}
	\begin{corollary}\label{psi-pressure}
		Suppose that $\Psi\in\mathcal{F}^-(\Sigma_{A},T)$, then
		%		For any $s\in\R$, we have $Q'(s)=\Psi_*(\mu_{s})$ and
		\begin{equation}\label{pre-1}
			\left\{ \begin{aligned}
				&	P^{\prime }(-\infty)=\lim_{n\to\infty}\frac{1}{n}\inf_{\mw\in\Sigma_A}\psi_{n}(\mw)=\inf_{\mw\in \Sigma_A}\liminf_{n\to\infty}\frac{1}{n}\psi_{n}(\mw),\\
				&P^{\prime }(\infty)=\lim_{n\to\infty}\frac{1}{n}\sup_{\mw\in\Sigma_A}\psi_{n}(\mw)=\sup_{\mw\in \Sigma_A}\limsup_{n\to\infty}\frac{1}{n}\psi_{n}(\mw). \end{aligned}\right.
		\end{equation}
	\end{corollary}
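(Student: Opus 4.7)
The plan is to establish, for each of the two signs, the four-way equality
\begin{equation*}
Q'(\infty)=\sup_{\mu\in\mathcal{M}(\Sigma_A,T)}\Psi_*(\mu)=\lim_{n\to\infty}\tfrac{1}{n}\sup_{\mw\in\Sigma_A}\psi_n(\mw)=\sup_{\mw\in\Sigma_A}\limsup_{n\to\infty}\tfrac{1}{n}\psi_n(\mw),
\end{equation*}
together with its natural analogue (all extrema reversed, $-\infty$ in place of $+\infty$). Since the two signs are parallel upon replacing almost-subadditivity of $\sup\psi_n$ by almost-superadditivity of $\inf\psi_n$, I treat only the $+\infty$ case.

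First I collect the easy directions. By Proposition \ref{relatived-pressure-diff}, $Q$ is $C^1$ with bounded monotone derivative, so integration gives $Q(s)/s\to Q'(\infty)$ as $s\to+\infty$. Dividing the variational principle $Q(s)=\sup_\mu\{h_T(\mu)+s\Psi_*(\mu)\}$ by $s>0$ yields
\begin{equation*}
\sup_\mu\Psi_*(\mu)\le \frac{Q(s)}{s}\le \frac{h_{top}(T)}{s}+\sup_\mu\Psi_*(\mu),
\end{equation*}
which sends to $Q'(\infty)=\sup_\mu\Psi_*(\mu)$ upon letting $s\to+\infty$. Separately, almost-additivity gives $\sup_\mw\psi_{n+m}(\mw)\le\sup_\mw\psi_n(\mw)+\sup_\mw\psi_m(\mw)+C_{aa}$, so Fekete's lemma applied to $\sup_\mw\psi_n+C_{aa}$ produces the limit $\lambda^+:=\lim_n n^{-1}\sup_\mw\psi_n(\mw)$, and integrating the trivial pointwise bound $\psi_n(\mw)\le\sup_\mw\psi_n$ against any invariant $\mu$ yields both $\sup_\mu\Psi_*(\mu)\le\lambda^+$ and $\sup_\mw\limsup_n n^{-1}\psi_n(\mw)\le\lambda^+$.

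The reverse inequality $\lambda^+\le\sup_\mu\Psi_*(\mu)$ is extracted from an almost-extremizing invariant measure. For each large $n$ I select an $n$-cylinder $[w_n]$ on which $\psi_n$ is within $C_{bv}$ of $\sup\psi_n$ (by bounded variation), then invoke topological mixing of $(\Sigma_A,T)$ to find a periodic point $p_n\in[w_n]$ of period $n'=n+O(1)$. Iterating the almost-additive inequality along the orbit of $p_n$ gives $\psi_{kn'}(p_n)\ge k\psi_{n'}(p_n)-(k-1)C_{aa}$, so the ergodic orbit measure $\nu_n$ satisfies
\begin{equation*}
\Psi_*(\nu_n)=\lim_{k\to\infty}\frac{\psi_{kn'}(p_n)}{kn'}\ge \frac{\psi_{n'}(p_n)-C_{aa}}{n'}\ge \lambda^+-\varepsilon_n
\end{equation*}
with $\varepsilon_n\to 0$. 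Any weak-$*$ limit $\nu$ of $\{\nu_n\}$ is $T$-invariant, and the continuity of $\mu\mapsto\Psi_*(\mu)$ for almost-additive potentials (\cite[eq.~(36)]{Ba2006}) gives $\Psi_*(\nu)\ge\lambda^+$. The remaining inequality $\sup_\mw\limsup_n n^{-1}\psi_n(\mw)\ge\sup_\mu\Psi_*(\mu)$ follows by applying the almost-additive Birkhoff theorem to any ergodic $\mu$ with $\Psi_*(\mu)$ arbitrarily close to $\sup_\mu\Psi_*(\mu)$, producing a point $\mw$ with $\lim_n n^{-1}\psi_n(\mw)=\Psi_*(\mu)$.

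The principal obstacle is the periodic-orbit construction in the third paragraph: it simultaneously requires mixing of the SFT (to close a near-optimizing word into a short-period orbit with bounded connector), the bounded-variation property (to transfer the cylinder supremum into a pointwise value at $p_n$), almost-additivity (to control $\psi_{kn'}(p_n)$ along orbit iterates), and continuity of $\Psi_*$ in the weak-$*$ topology; while each ingredient is classical, the simultaneous bookkeeping of the error terms $C_{aa}$, $C_{bv}$ and $\varepsilon_n$ needs care to ensure the limiting $\nu$ genuinely achieves $\Psi_*(\nu)\ge\lambda^+$.
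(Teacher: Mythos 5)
Your proof is correct, but it takes a genuinely different route from the paper's. The paper never passes through the variational identity $Q'(\pm\infty)=\inf_\mu\Psi_*(\mu)$ (resp.\ $\sup_\mu\Psi_*(\mu)$): instead it sandwiches the partition function directly, $s\inf_{\mw}\psi_n(\mw)\le\log\sum_{|w|=n}\exp(s\sup_{[w]}\psi_n)\le\log\#\{w:|w|=n\}+s\inf_{\mw}\psi_n(\mw)$ for $s<0$, divides by $ns$ and lets $s\to-\infty$ to identify $Q'(-\infty)$ with $\lim_n\frac1n\inf_{\mw}\psi_n(\mw)$ (using, as you do, Fekete for the existence of that limit and $Q(s)/s\to Q'(-\infty)$); the equality with $\inf_{\mw}\liminf_n\frac1n\psi_n(\mw)$ is then handled by a short contradiction argument rather than by exhibiting generic points. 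Your version replaces the two-line combinatorial sandwich by the chain $Q'(\infty)=\sup_\mu\Psi_*(\mu)=\lambda^+$, whose nontrivial half requires the periodic-orbit closing construction (mixing of the SFT, bounded variation, iterated almost-additivity, and Kingman for the orbit measures) --- all of which is available in the paper's setting and whose error bookkeeping does close up, since $\psi_{n'}(p_n)\ge\psi_n(p_n)-c_2N-C_{aa}$ with $N$ the fixed mixing constant. What your route buys is that it simultaneously establishes $Q'(\pm\infty)=\inf_\mu\Psi_*(\mu)$ and $\sup_\mu\Psi_*(\mu)$, a fact the paper only obtains later in Proposition \ref{dos-mu-ana}(1) by citing \cite{FH2010}; what the paper's route buys is brevity and independence from any measure-theoretic or orbit-closing machinery. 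One small economy you could make: the weak-$*$ limit of the $\nu_n$ is superfluous, since $\sup_\mu\Psi_*(\mu)\ge\Psi_*(\nu_n)\ge\lambda^+-\varepsilon_n$ already gives the inequality, and the same periodic points $p_n$ serve as the generic points needed for $\sup_{\mw}\limsup_n\frac1n\psi_n(\mw)\ge\lambda^+$, making the appeal to the almost-additive ergodic theorem for a near-optimal ergodic $\mu$ unnecessary.
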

	
	\begin{proof}
		We only need to prove the first equation of \eqref{pre-1}, and the second one is similar.
		
		\noindent{\bf Claim:}  $\liminf\limits_{n\to\infty}\dfrac{1}{n}\inf\limits_{\mw\in\Sigma_A}\psi_{n}(\mw)=\lim\limits_{n\to\infty}\dfrac{1}{n}\inf\limits_{\mw\in\Sigma_A}\psi_{n}(\mw)=\inf\limits_{\mw\in \Sigma_A}\liminf\limits_{n\to\infty}\dfrac{1}{n}\psi_{n}(\mw).$
		
		\noindent $\lhd $
		Since $\Psi\in\mathcal{F}^-(\Sigma_A,T)$, then for any $n,m\in{\mathbb{N}}$, we have 
		\begin{align*}
			\inf_{\mw\in\Sigma_A}\psi_{n+m}(\mw)\geq\inf_{%
				\mw\in\Sigma_A}\psi_{n}(\mw)+\inf_{\mw%
				\in\Sigma_A}\psi_{m}(\mw)-C(\Psi).
		\end{align*}
		Thus $(C(\Psi)-\inf\limits_{\mw\in\Sigma_A}\psi_{n}(%
		\mw))_{n\geq1}$ form a sub-additive sequence. Then it is well
		known that the limit $	\lim\limits_{n\to\infty}\frac{1}{n}\inf\limits_{\mw\in\Sigma_A}\psi_n(\mw)$ exists, and the first equality of this Claim holds.

		Note that for any $y\in\Sigma_A$ and $n\in\N$, we have that $\inf\limits_{\mw\in\Sigma_A}\psi_n(\mw)\leq\psi_{n}(y)$, so dividing by $n$, and taking the lower limit and infimum for $y\in\Sigma_A$, we get
		\begin{equation*}
			\lim_{n\to\infty}\frac{1}{n}\inf_{\mw\in\Sigma_A}\psi_n(%
			\mw)\leq\inf_{y\in\Sigma_A}\liminf_{n\to\infty}%
			\frac{1}{n}\psi_n(y).
		\end{equation*}
		
		If the second equality of the Claim is not satisfied, the above inequality implies that there exists constant $\varepsilon_0>0$, such that 
		\begin{equation*}
			\liminf_{n\to\infty}\frac{1}{n}\Big(\psi_{n}(y)-\inf_{\mw%
				\in\Sigma_A}\psi_n(\mw)\Big)\geq\varepsilon_0\ \ \ \text{%
				for all}\ y\in\Sigma_A.
		\end{equation*}
		We conclude that there exists a constant $C>0$ such that for all $y
		\in\Sigma_A$ and large $n$, 
		\begin{equation*}
			\psi_{n}(y)-\inf_{\mw\in\Sigma_A}\psi_n(\mw
			)\geq C.
		\end{equation*}
		This clearly constitutes a contradiction.
		\hfill $\rhd$
		
		Since $\Psi$ is negative, then for any $s<0$, we have 
		\begin{equation*}
			s\inf_{\mw\in\Sigma_A}\psi_{n}(\mw)\leq\log\sum_{|w|=n}\exp(%
			\sup_{x\in[w]}s\psi_{n}(x))\leq\log\#\{w:|w|=n\}+s\inf_{\mw\in\Sigma_A}\psi_{n}(\mw).
		\end{equation*}
		Dividing by $n$ and taking the lower limit as $n\to\infty$, we conclude that 
		\begin{equation*}
			s\liminf_{n\to\infty}\frac{1}{n}\inf_{\mw\in\Sigma_A}\psi_{n}(\mw)\leq
			P(s)\leq\liminf\limits_{n\to\infty}\frac{\log\#\{w:|w|=n\}}{n}+s\liminf_{n\to\infty}\frac{1}{n}\inf_{\mw\in\Sigma_A}\psi_{n}(\mw).
		\end{equation*}
		Note that $\liminf\limits_{n\to\infty}\frac{1}{n}\log\#\{w:|w|=n\}<\# A<\infty$. Now by Proposition \ref{relatived-pressure-diff} (2), we have 
		\begin{equation*}
			\liminf_{n\to\infty}\frac{1}{n}\inf_{\mw\in\Sigma_A}\psi_{n}(\mw)=\lim\limits_{s\to-\infty}\frac{P(s)}{s}=\lim\limits_{s\to-%
				\infty}P'(s)=P'(-\infty).
		\end{equation*}
		This combines with Claim, thus showing the first equality of \eqref{pre-1}.
	\end{proof}
	\subsection{Multifractal Analysis}\
	
	Recall that $\mu_{\max}$ is the Gibbs measure related to $\mathbf{0}$, we begin with the following lemma:
	\begin{lemma}\label{mu-phi-loc}
		Fix $\Psi\in\mathcal{F}^{-}(\Sigma_{A},T)$.	For any $\mw\in\Sigma_A$, we have
		\begin{equation}\label{MA}
			\underline{d}_{\mu_{\max}}(\mw) =-\frac{P(0)}{%
				\liminf\limits_{n\to\infty}\frac{1}{n}\psi_{n}(\mw)}\ \ \ \ \text{and}\ \ \ \ \overline{d}_{\mu_{\max}}(\mw) =-\frac{P(0)}{\limsup\limits_{n\to\infty}\frac{1}{n}\psi_{n}(\mw)}.
		\end{equation}
		Consequently, the maximal entropy measure $\mu_{\max}$ is exact-dimensional and 
		\begin{equation}\label{dim-mu-phi}
			\dim_H\mu_{\max}=-\frac{P(0)}{\Psi_*(\mu_{\max})}=-\frac{h_{top}(T)}{\Psi_*(\mu_{\max})}.
		\end{equation}
	\end{lemma}
	
	\begin{proof}
		Recall that the metric is defined by \eqref{weak-gibbs-metric}. Fix $\mw\in\Sigma_A$. For any small $r>0$, let $n=n(x)$ be the unique number satisfies 
		\begin{align*}
			\exp(\psi_{n}(\mw))&<r\leq\exp(\psi_{n-1}(\mw)),
		\end{align*}
		which implies $[\mw|_{n}]\subset B(\mw,r)\subset[\mw|_{n-1}]$. Consequently, 
		\begin{equation*}
			\mu_{\max}([\mw|_{n}])\leq \mu_{\max}(B(\mw%
			,r))\leq\mu_{\max}([\mw|_{n-1}]).
		\end{equation*}
		Combining with Corollary \ref{pressure-continuous} (2) that $\mu_{\max}([\mw|_n])\sim\exp(-nP(0))$, we have
		\begin{equation*}
			\frac{-(n-1)P(0)+C_1}{\psi_{n}(\mw)}\leq\frac{\log\mu_{\max}(B(\mw,r))}{\log r}\leq\frac{-nP(0)+C_2}{%
				\psi_{n-1}(\mw)}.
		\end{equation*}
		Now by taking the
		upper and lower limits, the equations \eqref{MA} hold.
		
		By Corollary \ref{pressure-continuous} (2), $\mu_{\max}$ is a Gibbs measure, and hence it is an ergodic measure. Note that $\Psi$ is almost additive, by Kingman's ergodic theorem, for $\mu_{\max}$ a.e. $x\in\Sigma_A$, 
		\begin{equation*}
			\lim\limits_{n\to\infty}\frac{1}{n}\psi_n(x)=\lim\limits_{n\to\infty}
\frac{1}{n}\int\psi_n\mathrm{d}\mu_{\max}=\Psi_*(\mu_{\max}).
		\end{equation*}
		Together with \eqref{MA}, then $\mu_{\max}$ is exact-dimensional and \eqref{dim-mu-phi} holds.
	\end{proof}
	
	Fix $\Psi\in\mathcal{F}^{-}(\Sigma_{A},T)$.	Let $\phi_n(x)=-nP(0)$ and $\Phi=\{\phi_n:n\geq1\}$. In this case,  we see that the potential $\Phi\in\mathcal{F}(\Sigma_{A},T)$, 
	\begin{equation*}
		\underline{d}_{\mu_{\max}}(\mw)=\liminf\limits_{n\to\infty}\frac{\phi_n(x)}{%
			\psi_{n}(\mw)}\ \ \ \ \text{and}\ \ \ \ \overline{d}_{\mu_{\max}}(\mw)= \limsup\limits_{n\to\infty}\frac{\phi_n(x)}{%
			\psi_{n}(\mw)}.
	\end{equation*}
	Then we can cite some results for the multifractal analysis of quotients of almost additive potentials on subshifts of finite type (see for example \cite{BQ2012,FH2010}).
	
	\begin{proposition}[\text{\cite[Proposition 1 and Theorem 1.1]{BQ2012}}]\label{Qu}
		Write$$
		\Lambda_{\Phi/\Psi}(\beta):=\left\{x\in\Sigma_{A}:\lim\limits_{n\to\infty}\frac{\phi_n(x)}{\psi_n(x)}=\beta\right\}\ \ \text{and}\ \  L_{\Phi/\Psi}:=\left\{\frac{\Phi_*(\mu)}{\Psi_*(\mu)}:\mu\in\mathcal{M}(\Sigma_A,T)\right\}.
		$$
		Then the set
		$\Lambda_{\Phi/\Psi}(\beta)\neq\emptyset\Leftrightarrow
		\beta\in L_{\Phi/\Psi}$. Moreover, if $\beta\in L_{\Phi/\Psi}$, then 
		\begin{equation*}
			\dim_H\Lambda_{\Phi/\Psi}(\beta)=\inf_{q\in{\mathbb{R}}}\mathcal{L}_{\Phi/\Psi}(q,\beta),
		\end{equation*}
		where $\mathcal{L}_{\Phi/\Psi}(q,\beta)$ is the unique solution $
		t=t(q,\beta)$ such that $
		P_{top}(q\Phi+(t-q\beta)\Psi)=0.$
	\end{proposition}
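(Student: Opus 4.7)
The plan is to prove this via the standard two-sided multifractal scheme for quotients of almost additive potentials: first characterize $L_{\Phi/\Psi}$ via empirical measures, then obtain an upper bound on $\dim_H\Lambda_{\Phi/\Psi}(\beta)$ from a Legendre-type dual of the pressure equation $P(q\Phi+(t-q\beta)\Psi)=0$, and finally match it with a lower bound coming from a specially chosen equilibrium measure.

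For the equivalence $\Lambda_{\Phi/\Psi}(\beta)\neq\emptyset \Leftrightarrow \beta\in L_{\Phi/\Psi}$, the forward direction takes $x\in\Lambda_{\Phi/\Psi}(\beta)$ and extracts a weak-$*$ limit $\mu\in\mathcal{M}(\Sigma_A,T)$ of the empirical averages $\mu_n=n^{-1}\sum_{k=0}^{n-1}\delta_{T^k x}$; continuity of $\nu\mapsto\Phi_*(\nu),\Psi_*(\nu)$ on almost additive potentials (already used inside the proof of Proposition \ref{relatived-pressure-diff}) together with $\Psi_*(\mu)<0$ force $\Phi_*(\mu)=\beta\Psi_*(\mu)$. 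The converse reduces by ergodic decomposition to $\mu$ ergodic, and then Kingman's subadditive ergodic theorem, applied separately to $\Phi$ and $\Psi$, gives $\mu$-a.e.\ convergence of $\phi_n(x)/n$ and $\psi_n(x)/n$ and hence of $\phi_n(x)/\psi_n(x)$ to $\beta$.

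For the upper bound, fix $q\in\R$, let $t=\mathcal{L}_{\Phi/\Psi}(q,\beta)$ be the unique zero of $s\mapsto P(q\Phi+(s-q\beta)\Psi)$ provided by Corollary \ref{pressure-continuous}(1), and let $\nu_{q,t}$ be the associated Gibbs measure from Theorem \ref{variation priciple}(2). On $\Lambda_{\Phi/\Psi}(\beta)$ one has $q\phi_n(x)+(t-q\beta)\psi_n(x)=t\psi_n(x)+o(\psi_n(x))$, so the Gibbs property yields
\[ \nu_{q,t}([x|_n]) \asymp \exp\bigl(q\phi_n(x)+(t-q\beta)\psi_n(x)\bigr) = \exp\bigl((t+o(1))\psi_n(x)\bigr). \]
Since the weak Gibbs metric $d_\Psi$ identifies cylinders $[x|_n]$ with balls of radius $\asymp\exp(\psi_n(x))$, the mass distribution principle, applied on the filtration $\{x:|\phi_n(x)/\psi_n(x)-\beta|<\varepsilon\text{ for all }n\geq N\}$ on which the $o(\psi_n(x))$ is uniform, gives $\dim_H\Lambda_{\Phi/\Psi}(\beta)\leq t$; minimizing over $q$ gives the desired upper bound.

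For the matching lower bound, pick $q^*$ realizing the infimum with $t^*=\mathcal{L}_{\Phi/\Psi}(q^*,\beta)$; implicit differentiation of the defining pressure identity combined with $Q'(s)=\Psi_*(\mu_s)$ from Proposition \ref{relatived-pressure-diff} yields the envelope condition $\partial_q t(q^*,\beta)=0 \Leftrightarrow \Phi_*(\nu_{q^*,t^*})=\beta\,\Psi_*(\nu_{q^*,t^*})$, so by Kingman's theorem $\nu_{q^*,t^*}$ is concentrated on $\Lambda_{\Phi/\Psi}(\beta)$; the exact-dimensionality computation of Lemma \ref{mu-phi-loc}, adapted to the nontrivial potential $q^*\Phi+(t^*-q^*\beta)\Psi$ in place of $\mathbf{0}$, returns local dimension $t^*$ on a full-measure set, hence $\dim_H\Lambda_{\Phi/\Psi}(\beta)\geq t^*$. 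The main obstacle will be upgrading the pointwise ``$o(\psi_n(x))$'' control into uniform convergence on the level-set filtration so that mass distribution actually applies; a secondary delicate point is that the infimum over $q$ can fail to be attained when $\beta$ lies on the boundary of $L_{\Phi/\Psi}$, which requires a weak-$*$ tightness argument on the family $\{\nu_{q,t(q,\beta)}\}_q$ leaning again on the $C^1$ strict convexity of the pressure established in Proposition \ref{relatived-pressure-diff}.
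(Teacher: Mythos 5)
A preliminary remark: the paper offers no proof of this proposition at all --- it is quoted verbatim from \cite{BQ2012} and used as a black box --- so there is no in-paper argument to compare against. Your sketch reconstructs the standard route of that reference (and of Feng--Huang): upper bound by computing the lower local dimension of the Gibbs measure $\nu_{q,t(q,\beta)}$ at every point of the level set and invoking the Billingsley lemma, lower bound by choosing the minimizing $q^*$ so that the equilibrium state charges $\Lambda_{\Phi/\Psi}(\beta)$ and is exact-dimensional with dimension $t^*$. That architecture is correct, and the two difficulties you flag are the right ones, although the uniformity worry in the upper bound is not actually needed: the version of the mass distribution principle required here is pointwise ($\underline{d}_{\nu}(x)\le t$ for every $x$ in a set already bounds its Hausdorff dimension by $t$). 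Two details you elide: (a) in the forward direction of the equivalence, continuity of $\nu\mapsto\Phi_*(\nu)$ alone does not give $\phi_{n_k}(x)/n_k\to\Phi_*(\mu)$ along the subsequence with $\mu_{n_k}\to\mu$; one needs the standard lemma relating almost-additive ergodic averages along an orbit to integrals against empirical measures (valid here because $\Phi,\Psi$ are almost additive with bounded variation); (b) the implicit differentiation in the lower bound needs $C^1$-smoothness of the two-parameter pressure $(q,s)\mapsto P(q\Phi+s\Psi)$, whereas Proposition \ref{relatived-pressure-diff} only treats the one-parameter family $P(s\Psi)$; the extension is routine but must be stated.

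The one step that fails as written is the converse of the equivalence: ``reduces by ergodic decomposition to $\mu$ ergodic.'' The functionals $\mu\mapsto\Phi_*(\mu)$ and $\mu\mapsto\Psi_*(\mu)$ are affine, but their quotient is not, so a non-ergodic $\mu$ with $\Phi_*(\mu)/\Psi_*(\mu)=\beta$ need not have any ergodic component realizing the same quotient (take $\mu=\frac{1}{2}(\mu_1+\mu_2)$ with the two components realizing different quotients; the quotient of the averages is a weighted mean lying strictly between them). For $\beta$ in the interior of $L_{\Phi/\Psi}$ the correct route is the one you already deploy for the lower bound: $\nu_{q^*,t^*}$ is an ergodic Gibbs measure with $\Phi_*(\nu_{q^*,t^*})=\beta\Psi_*(\nu_{q^*,t^*})$, and Kingman then gives $\nu_{q^*,t^*}(\Lambda_{\Phi/\Psi}(\beta))=1$, so non-emptiness comes for free from that construction. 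At the two endpoints of $L_{\Phi/\Psi}$, where no minimizing $q^*$ exists, a genuinely separate argument is required --- a concatenation/Moran construction or a careful weak-$*$ limit of the family $\nu_{q,t(q,\beta)}$ as $q\to\pm\infty$ --- and this is where the real work in \cite{BQ2012} lies; your sketch only gestures at it.
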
	
	\begin{proposition}
		\label{dos-mu-ana} %	For any $\lambda>20$, then the following hold:
		(1) Define $L:=\left\{P(0)/\Psi_*(\mu):\mu\in\mathcal{M}%
		(\Sigma_A,T)\right\}$, then  
		\begin{equation}\label{beta=psi}
			L=\left[-\frac{P(0)}{P'(-\infty)},-\frac{P(0)}{P'(\infty)}\right].
		\end{equation}
		Moreover, we have
		\begin{equation}\label{beta-lower-upper}
			-\frac{P(0)}{P'(-\infty)}=\inf_{x\in\Sigma_{A}}\underline{d}_{\mu_{\max}}(x)\ \ \text{and}\ \ \ -\frac{P(0)}{P'(\infty)}=	\sup_{x\in\Sigma_{A}}\overline{d}_{\mu_{\max}}(x).
		\end{equation}
		Thus the optimal H\"older exponent of $\mu_{\max}$ is $-\frac{P(0)}{P'(-\infty)}$.
		
		(2) Define $\Lambda_{\beta}:=\left\{\mw\in\Sigma_A:d_{\mu_{\max}}(\mw)=\beta\right\}$, then $\Lambda_{\beta}\neq\emptyset$
		if and only if $\beta\in L$. Moreover, for any $\beta\in L$, we have 
		\begin{equation*}
			\dim_H\Lambda_{\beta}=\inf_{q\in{\mathbb{R}}}(\tau(q)+q\beta),
		\end{equation*}
		where $\tau(q)$ is the zero of $P_{top}(q\Phi+\tau\Psi)=0$. 
	\end{proposition}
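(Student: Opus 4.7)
The plan is to prove (1) and (2) independently, deriving (1) from Lemma \ref{mu-phi-loc}, Corollary \ref{psi-pressure}, and a description of the range of $\Psi_*$ on $\mathcal{M}(\Sigma_A,T)$, and deriving (2) as a direct specialization of Proposition \ref{Qu} to the constant potential $\phi_n(x)\equiv -nQ(0)$.

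For part (1), I first identify $\mathcal{I}:=\{\Psi_*(\mu):\mu\in\mathcal{M}(\Sigma_A,T)\}$ with the compact interval $[Q'(-\infty),Q'(\infty)]$. Weak-$*$ compactness and convexity of $\mathcal{M}(\Sigma_A,T)$, together with affinity and continuity of $\mu\mapsto\Psi_*(\mu)$ (the latter is recorded in the Claim inside the proof of Proposition \ref{relatived-pressure-diff}), ensure that $\mathcal{I}$ is a compact interval. For the lower bound $\Psi_*(\mu)\geq Q'(-\infty)$, I apply Kingman's theorem to ergodic $\mu$: some $\mu$-typical point $\mw$ satisfies $\frac{1}{n}\psi_n(\mw)\to\Psi_*(\mu)$, and Corollary \ref{psi-pressure} forces this limit to be $\geq Q'(-\infty)$; the general case reduces to the ergodic one by ergodic decomposition. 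Attainment at $Q'(-\infty)$ follows by extracting a weak-$*$ accumulation point $\mu_*$ of $\{\mu_s\}$ as $s\to-\infty$ and noting $\Psi_*(\mu_*)=\lim_s\Psi_*(\mu_s)=\lim_s Q'(s)=Q'(-\infty)$; the upper endpoint is symmetric. Because $x\mapsto -Q(0)/x$ is monotone on the negative axis, this yields the displayed formula for $L$. For the two pointwise identities, I use Lemma \ref{mu-phi-loc}: since $-Q(0)<0$ and $\liminf_n\frac{1}{n}\psi_n(\mw)\in[-c_2,-c_1]$, minimising $\underline{d}_{\mu_{\max}}(\mw)=-Q(0)/\liminf_n\frac{1}{n}\psi_n(\mw)$ over $\mw$ amounts to minimising $\liminf_n\frac{1}{n}\psi_n(\mw)$, which by Corollary \ref{psi-pressure} equals $Q'(-\infty)$; the upper-dimension identity is analogous.

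For part (2), take $\Phi=\{\phi_n\}$ with $\phi_n\equiv -nQ(0)$, which trivially belongs to $\mathcal{F}(\Sigma_A,T)$. Then $\Phi_*(\mu)=-Q(0)$ for every $\mu$, so $L_{\Phi/\Psi}$ coincides with $L$ as described in part (1); moreover the identification $\Lambda_\beta=\Lambda_{\Phi/\Psi}(\beta)$ was recorded in the paragraph preceding the proposition statement. Proposition \ref{Qu} then delivers both the equivalence $\Lambda_\beta\neq\emptyset\iff\beta\in L$ and the formula $\dim_H\Lambda_\beta=\inf_q\mathcal{L}_{\Phi/\Psi}(q,\beta)$, where $\mathcal{L}_{\Phi/\Psi}(q,\beta)$ solves $P(q\Phi+(t-q\beta)\Psi)=0$. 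Because $q\Phi$ contributes a deterministic $-nqQ(0)$, the definition of pressure gives $P(q\Phi+u\Psi)=P(u\Psi)-qQ(0)$ for all $u\in\R$, so the defining equation rearranges to $P((t-q\beta)\Psi)=qQ(0)$, i.e.~$t-q\beta=\tau(q)$, where $\tau(q)$ is the unique zero of $P(q\Phi+\tau\Psi)=0$ (existence and uniqueness follow from Corollary \ref{pressure-continuous}(1)). Substituting gives $\dim_H\Lambda_\beta=\inf_{q\in\R}(\tau(q)+q\beta)$.

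The main technical obstacle is the endpoint identification in part (1), namely showing that $Q'(\pm\infty)$ are actually attained as $\Psi_*(\mu)$ for some $\mu\in\mathcal{M}(\Sigma_A,T)$; once this is in place, everything else amounts to bookkeeping with tools already developed in Section \ref{sec-TF}.
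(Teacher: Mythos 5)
Your proposal is correct and follows the same overall skeleton as the paper: part (1) reduces to identifying the range of $\Psi_*$ on $\mathcal{M}(\Sigma_A,T)$ and then invoking Lemma \ref{mu-phi-loc} together with Corollary \ref{psi-pressure}, and part (2) is a specialization of Proposition \ref{Qu} to the constant potential $\phi_n\equiv-nQ(0)$. The one genuine difference is in how the interval $\{\Psi_*(\mu):\mu\in\mathcal{M}(\Sigma_A,T)\}=[Q'(-\infty),Q'(\infty)]$ is established: the paper simply cites \cite[Lemmas A.3(1) and A.4(iii)]{FH2010}, which give that this set equals $[\beta_*,\beta^*]$ with $\beta_*=\lim_n\frac1n\inf_\mw\psi_n(\mw)$ and $\beta^*=\lim_n\frac1n\sup_\mw\psi_n(\mw)$, and then matches the endpoints via Corollary \ref{psi-pressure}; you instead prove the identification from scratch, getting the interval structure from compactness, convexity and the continuity of $\mu\mapsto\Psi_*(\mu)$ (already recorded in the proof of Proposition \ref{relatived-pressure-diff}), the one-sided bounds from Corollary \ref{psi-pressure}, and the attainment of the endpoints by taking weak-$*$ accumulation points of the Gibbs measures $\mu_s$ as $s\to\pm\infty$ and using $Q'(s)=\Psi_*(\mu_s)$. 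This buys self-containedness at the cost of a slightly longer argument (and note the lower bound $\Psi_*(\mu)\ge Q'(-\infty)$ already follows directly from $\frac1n\int\psi_n\,d\mu\ge\frac1n\inf_\mw\psi_n(\mw)$ without Kingman or ergodic decomposition). In part (2) you also make explicit the computation $P(q\Phi+u\Psi)=P(u\Psi)-qQ(0)$ behind the identity $\mathcal{L}_{\Phi/\Psi}(q,\beta)=\tau(q)+q\beta$, which the paper asserts without detail; this is a useful clarification rather than a deviation.
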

	
	\begin{proof}
		(1) By \cite[Lemmas A.3(1) and A.4(iii)]{FH2010}, the set $\left\{\Psi_*(\mu):\mu\in\mathcal{M}%
		(\Sigma_A,T)\right\}$ is an interval which equals to $[\beta_*,\beta^*]$, where 
		\begin{equation*}
			\beta_*=\lim\limits_{n\to \infty}\dfrac{1}{n}%
			\inf\limits_{\mw\in\Sigma_A}\psi_{n}(\mw) \ \ \text{and} \ \
			\beta^*=\lim\limits_{n\to \infty}\dfrac{1}{n}%
			\sup\limits_{\mw\in\Sigma_A}\psi_{n}(\mw).
		\end{equation*}
		Now combine with Corollary \ref{psi-pressure}, we see that \eqref{beta=psi} holds. Moreover, equations \eqref{beta-lower-upper} follow directly from Lemma \ref{mu-phi-loc} and Corollary \ref{psi-pressure}.
		
		(2) By statement (1) and
		Proposition \ref{Qu}, we conclude that $\Lambda_{\Phi/\Psi}(\beta)=\Lambda_{%
			\beta},\ L_{\Phi/\Psi}=L$ and $\mathcal{L}_{\Phi/\Psi}(q,\beta)=\tau(q)+q\beta$, then
		$\Lambda_{\beta}\neq\emptyset$ if and only if $\beta\in L$. Moreover, if $%
		\beta\in L$, we have 
		\begin{equation*}
			\dim_H\Lambda_{\beta}=\inf_{q\in{\mathbb{R}}}(\tau(q)+q\beta).
		\end{equation*}
		The proof is complete.
	\end{proof}
	
	\section{Structure of the spectrum of the Sturm Hamiltonian}\label{sec-spectrum}
	
	In this section we discuss the structure of the spectrum and give a symbolic space to code it. We also summarize known results and connections which we will use.
	\subsection{The covering structure of the spectrum}\label{sec-spectrum-1}\
	
	Following \cite{LW2004,Ra1997}, we describe the covering structure of the spectrum $\Sigma_{\alpha,\lambda}$. Recall that the Sturmian potential is given by $v_k=\lambda\chi_{[1-%
		\alpha,1)}(k\alpha+\theta\pmod 1). $ Since $\Sigma_{\alpha,\lambda}$ is
	independent of the phase $\theta$, in the rest of the paper we will take $%
	\theta=0$. 
	
	Assume that $\alpha\in \mathbb{I}$ has continued fraction expansion $[a_1,a_2,\cdots]$. Let $p_n/q_n(n\geq0)$ be the $n$-th partial quotient of $\alpha$, given by 
	\begin{align}\label{q-n}
		\begin{cases}
			p_{-1}=1, p_0=0,\  &p_{n+1}=a_{n+1}p_n+p_{n-1},\ n\geq0, \\
			q_{-1}=0, q_0=1,\ &q_{n+1}=a_{n+1}q_n+q_{n-1},\ n\geq0.
		\end{cases}
	\end{align}
	For any $n\in {\mathbb{N}}$ and $E\in\mathbb{R}$, the \textit{%
		transfer matrix} $M_n(E)$ over $q_n$ sites is defined by 
	\begin{equation*}
		M_n(E):= \left[%
		\begin{array}{cc}
			E-v_{q_n} & -1 \\ 
			1 & 0%
		\end{array}%
		\right] \left[%
		\begin{array}{cc}
			E-v_{q_n-1} & -1 \\ 
			1 & 0%
		\end{array}%
		\right] \cdots \left[%
		\begin{array}{cc}
			E-v_1 & -1 \\ 
			1 & 0%
		\end{array}%
		\right].
	\end{equation*}
	By convention, we define 
	\begin{equation*}
		\begin{array}{l}
			M_{-1}(E):= \left[%
			\begin{array}{cc}
				1 & -\lambda \\ 
				0 & 1%
			\end{array}%
			\right]%
		\end{array}
		\ \ \ \text{ and }\ \ \ 
		\begin{array}{l}
			M_{0}(E):= \left[%
			\begin{array}{cc}
				E & -1 \\ 
				1 & 0%
			\end{array}%
			\right].%
		\end{array}%
	\end{equation*}
	
	\indent For $n\ge0$ and $p\ge-1$, define $$h_{(n,p)}(E):=\mathrm{tr} (
	M_{n-1}(E) M_n^p(E))	\ \ \ \text{ and }\ \ \ 
	\sigma_{(n,p)}:=\{E\in\mathbb{R}:|h_{(n,p)}(E)|\leq2\},
	$$
	where $\mathrm{tr}M$ stands for the trace of the matrix $M$. For
	any $n\ge 0,$ we have
	\begin{equation*}
		\left(\sigma_{(n+2,0)}\cup\sigma_{(n+1,0)}\right)\subset
		\left(\sigma_{(n+1,0)}\cup\sigma_{(n,0)}\right)\ \ \text{and}\
		\		\Sigma_{\alpha,\lambda}=\bigcap_{n\ge0}\left(\sigma_{(n+1,0)}
		\cup\sigma_{(n,0)}\right).
	\end{equation*}
	The set $\sigma_{(n,p)}$ is made of finitely many disjoint intervals.
	Each interval of $\sigma_{(n,p)}$ is called a \emph{band}. Assume $%
	B $ is a band of $\sigma_{(n,p)}$, then $h_{(n,p)}(E)$ is
	monotone on $B$ and $h_{(n,p)}(B)=[-2,2].$ We call $%
	h_{(n,p)}(\cdot)$ the \emph{generating polynomial} of $B$.
	
	Note that the family $\{\sigma_{(n+1,0)}\cup\sigma_{(n,0)}:n\ge 0\}$ forms a decreasing family of coverings of $\Sigma_{\alpha,\lambda}$. However there are some repetitions between $%
	\sigma_{(n+1,0)}\cup\sigma_{(n,0)}$ and $\sigma_{(n+2,0)}\cup\sigma_{(n+1,0)}$. When $\lambda>4$, it is possible to choose a covering of $\Sigma_{\alpha,\lambda}$ elaborately such
	that we can get rid of these repetitions, as we will describe in the follows:
	
	\begin{definition}[\protect\cite{LW2004,Ra1997}]
		For $\lambda>4$ and $n\ge0$, define three types of bands as:
		
		$(n,\mathbf{1})$-type band: a band of $\sigma_{(n,1)}$ contained in
		a band of $\sigma_{(n,0)};$
		
		$(n,\mathbf{2})$-type band: a band of $\sigma_{(n+1,0)}$ contained
		in a band of $\sigma_{(n,-1)};$
		
		$(n,\mathbf{3})$-type band: a band of $\sigma_{(n+1,0)}$ contained
		in a band of $\sigma_{(n,0)}.$
	\end{definition}
	
	All three types of bands actually occur and they are disjoint. We call these
	bands \emph{\ spectral generating bands of order $n$}. For any $n\ge0$,
	define 
	\begin{equation*}
		{\mathcal{B}}_n^\alpha:=\{B: B \text{ is a spectral generating band of
			order } n\}.
	\end{equation*}
	
	The basic covering structure of $\Sigma_{\alpha,\lambda}$ is described in this
	proposition:
	
	\begin{proposition}[\protect\cite{LW2004,Ra1997}]
		\label{basic-struc} Fix $\alpha\in \mathbb{I}$ and $\lambda>4$. We have
		
		(1) For any $n\ge0$, 
		$
		\sigma_{(n+2,0)}\cup\sigma_{(n+1,0)}\subset \bigcup_{B\in{%
				\mathcal{B}}_n^\alpha}B \subset
		\sigma_{(n+1,0)}\cup\sigma_{(n,0)},
		$ thus $\{{\mathcal{B}}_n^\alpha:n\ge0\}$ are nested and 
		\begin{equation*}
			\Sigma_{\alpha,\lambda}=\bigcap_{n\ge0} \bigcup_{B\in{\mathcal{B}}_n^\alpha}B.
		\end{equation*}
		
		(2) Any $(n,\mathbf{1})$-type band contains
		only one band in ${\mathcal{B}}_{n+1}^\alpha$, which is of $(n+1,\mathbf{%
			2})$-type.
		
		(3) Any $(n,\mathbf{2})$-type band contains 
		$2a_{n+1}+1$ bands in ${\mathcal{B}}_{n+1}^\alpha$, $a_{n+1}+1$ of which
		are of $(n+1,\mathbf{1})$-type and $a_{n+1}$ of which are of $(n+1,\mathbf{3}%
		)$-type. Moreover, the $(n+1,\mathbf{1})$-type bands interlace the $(n+1,\mathbf{3})$-type bands.
		
		(4) Any $(n,\mathbf{3})$-type band contains 
		$2a_{n+1}-1$ bands in ${\mathcal{B}}_{n+1}^\alpha$, $a_{n+1}$ of which
		are of $(n+1,\mathbf{1})$-type and $a_{n+1}-1$ of which are of $(n+1,\mathbf{%
			3})$-type. Moreover, the $(n+1,\mathbf{1})$-type bands interlace the $(n+1,%
		\mathbf{3})$-type bands.
	\end{proposition}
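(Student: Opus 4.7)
The plan is to reproduce Raymond's argument via the trace-map structure of Sturmian potentials. The backbone is the multiplicative identity
\[
M_{n+1}(E) \;=\; M_{n-1}(E)\, M_n(E)^{a_{n+1}},
\]
which follows by induction from the $S$-adic coding of the Sturmian word $v_k$. Combined with Cayley--Hamilton (each $M_n$ has determinant one, so $M_n^2 = (\mathrm{tr}\,M_n)\,M_n - I$), this yields a Chebyshev-type expansion of the form $M_n^{p} = S_{p-1}(\mathrm{tr}\,M_n)\,M_n - S_{p-2}(\mathrm{tr}\,M_n)\,I$, where the $S_k$ satisfy the recurrence $S_{k+1} = (\mathrm{tr}\,M_n)\,S_k - S_{k-1}$. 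Substituting in, one obtains explicit polynomial identities expressing $h_{(n+1,p)}$ in terms of the triple $(h_{(n,-1)}, h_{(n,0)}, h_{(n,1)})$, and in particular an expression of $h_{(n+1,0)}$ as a polynomial combination whose ``Chebyshev part'' is $S_{a_{n+1}}(h_{(n,0)})$ modulated by lower-order factors.

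Armed with these recurrences, I would first establish part (1) by a straightforward induction using the already-known nesting of $\{\sigma_{(n,0)}\cup\sigma_{(n+1,0)}\}_n$ and its intersection $\Sigma_{\alpha,\lambda}$: the three band types are designed precisely so that each $B\in \mathcal{B}_n^\alpha$ still covers $\sigma_{(n+2,0)}\cup\sigma_{(n+1,0)}$ while sitting inside $\sigma_{(n+1,0)}\cup\sigma_{(n,0)}$, and the coarse nesting $\bigcup_{B\in \mathcal{B}_{n+1}^\alpha} B \subset \bigcup_{B\in \mathcal{B}_n^\alpha} B$ follows. For parts (2)--(4), I fix a parent band $B$ of type $(n,\mathbf{j})$; by definition the controlling polynomial $h_{(n,p)}$ is monotone on $B$ with $h_{(n,p)}(B)=[-2,2]$, so counting sub-bands of order $n+1$ inside $B$ reduces to counting the preimages of $[-2,2]$ under the appropriate next-level trace polynomial on $B$. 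Using the Chebyshev expansion, these preimages are controlled by the monotone arcs of $S_{a_{n+1}}$, which produces the counts $1$, $2a_{n+1}+1$, and $2a_{n+1}-1$ respectively, and whose alternation of extreme values gives the stated interlacing of $(n+1,\mathbf{1})$- and $(n+1,\mathbf{3})$-type sub-bands.

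The main technical obstacle is a delicate sign analysis at the joints between successive monotonicity intervals of the next-level polynomial inside a parent band. One has to rule out accidental extra sub-bands where the polynomial dips back into $[-2,2]$, and correctly classify the two extremal sub-bands as $(n+1,\mathbf{1})$- or $(n+1,\mathbf{3})$-type, which is what forces the asymmetric counts $2a_{n+1}\pm 1$ in parts (3) and (4). This is exactly the step that uses the hypothesis $\lambda>4$ quantitatively: it supplies a uniform lower bound on $|h_{(n,-1)}(E)|$ on the parent band, through which one verifies that the relevant Chebyshev values at the joints genuinely lie outside $[-2,2]$ and no spurious spectral intervals appear. Once this estimate is in place, the combinatorics of parts (2)--(4) falls out of the Chebyshev structure, and part (1) follows from combining them across generations.
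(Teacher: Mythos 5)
The paper does not prove Proposition \ref{basic-struc} at all: it is quoted verbatim from Raymond \cite{Ra1997} and Liu--Wen \cite{LW2004} and used as a black box, so there is no in-paper argument to compare against. Measured against the source proof that the citation points to, your outline does follow the right road: the decomposition $M_{n+1}=M_{n-1}M_n^{a_{n+1}}$, the Cayley--Hamilton/Chebyshev expansion $M_n^{p}=S_{p-1}(\mathrm{tr}\,M_n)M_n-S_{p-2}(\mathrm{tr}\,M_n)I$, and the reduction of the sub-band count to counting monotone arcs of the next-level trace polynomial over a parent band are exactly the ingredients of Raymond's argument.

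However, as a proof the proposal has a genuine gap, and you have located it yourself without closing it: the ``delicate sign analysis at the joints'' is not an obstacle to be mentioned, it is the entire content of parts (2)--(4). The counts $1$, $2a_{n+1}+1$, $2a_{n+1}-1$, the classification of each sub-band as type $\mathbf{1}$, $\mathbf{2}$ or $\mathbf{3}$, and the interlacing all hinge on one specific lemma that your sketch does not identify: the Fricke--Vogt invariant
\begin{equation*}
h_{(n,-1)}^2(E)+h_{(n,0)}^2(E)+h_{(n,1)}^2(E)-h_{(n,-1)}(E)\,h_{(n,0)}(E)\,h_{(n,1)}(E)=4+\lambda^2 ,
\end{equation*}
valid for all $n$ and $E$, which for $\lambda>4$ forces $\sigma_{(n,-1)}\cap\sigma_{(n,0)}\cap\sigma_{(n,1)}=\emptyset$. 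This is the precise mechanism by which $\lambda>4$ enters (if all three traces had modulus at most $2$ the left side would be at most $20$, giving $\lambda\le 4$), and it is what rules out the ``accidental extra sub-bands'' and pins down the types of the extremal sub-bands; your vaguer claim of ``a uniform lower bound on $|h_{(n,-1)}|$ on the parent band'' is a consequence of it on the relevant set but is not established by anything in your write-up. Without stating and using this identity, the asymmetric counts in (3) and (4) and the disjointness of the three band types are asserted rather than proved. Part (1) is also not independent of (2)--(4): the inclusion $\sigma_{(n+2,0)}\cup\sigma_{(n+1,0)}\subset\bigcup_{B\in\mathcal{B}_n^\alpha}B$ is itself part of the same induction, as you correctly note at the end, so the whole proposition stands or falls with the unexecuted counting step.
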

	
	Thus $\{{\mathcal{B}}_n^\alpha: n\ge0\}$ form a natural covering of the
	spectrum $\Sigma_{\alpha,\lambda}$ (\cite{LPW2007,LW2005}).
	
	\begin{remark}
		\label{B-n} Note that there are only two spectral generating bands of
		order $0$: one is $\sigma_{(0,1)}=[\lambda-2,\lambda+2]$ with
		generating polynomial $h_{(0,1)}(E)=E-\lambda$ and type $(0,\mathbf{1%
		})$; the other is $\sigma_{(1,0)}=[-2,2]$ with generating
		polynomial $h_{(1,0)}(E)=E$ and type $(0,\mathbf{3})$. Thus 
		\begin{equation}\label{B-0}
			{\mathcal{B}}_0^\alpha=\{[\lambda-2,\lambda+2], [-2,2]\}.
		\end{equation}
	\end{remark}
	
	\subsection{The symbolic space and the coding of $\Sigma_{\alpha,\lambda}$}
	\label{sec-spectrum-2}\ 
	
	In this subsection we describe the coding of the spectrum $\Sigma_{\alpha,%
		\lambda}$ based on \cite{CQ2023,Qu2016,Qu2018}. Here we
	essentially follow \cite{CQ2023}.
	
	\subsubsection{The symbolic space $\Omega^{\protect\alpha}$}
	
	At first, for any $\alpha\in{\mathbb{I}}$, we construct a symbolic space $\Omega^{\alpha}$ to code the spectrum $\Sigma_{\alpha,\lambda}$. 
	
	For each $n\in{\mathbb{N}},$ define an alphabet $\mathscr{A}_n$ as 
	\begin{equation*}\label{alphabet-n}
		\mathscr{A}_n:=\{(\mathbf{1},i)_n:i=1,\cdots,n+1\}\cup \{(\mathbf{2}%
		,1)_n\}\cup \{(\mathbf{3},i)_n:i=1,\cdots,n\}.
	\end{equation*}
	Then $\#\mathscr{A}_n=2n+2.$ Assume $e=(\mathbf{t},i)_n\in \mathscr{A}_n$,
	we call $\mathbf{t}$ the \textit{type} of $e$ and write $\mathbf{t}(e):=\mathbf{t}$.
	
	Fix $m\in {\mathbb{N}}.$ Given ${\mathbf{t}}\in \{\mathbf{1},\mathbf{2},\mathbf{3}\}$ and $\hat e\in 
	\mathscr{A}_{m}$, we call ${\mathbf{t}}\hat e$ \textit{admissible}, denote
	by ${\mathbf{t}}\to \hat e,$ if 
	\begin{align}\label{admissible-T-A}
		({\mathbf{t}},\hat e)\in& \{(\mathbf{1},(\mathbf{2},1)_m)\}\cup  \notag \\
		&\{(\mathbf{2},(\mathbf{1},i)_m): 1\le i\le m+1\}\cup \{(\mathbf{2},(\mathbf{%
			3},i)_m): 1\le i\le m\}\cup \\
		&\{(\mathbf{3},(\mathbf{1},i)_m):1\le i\le m\}\cup \{(\mathbf{3},(\mathbf{3}%
		,i)_m):1\le i\le m-1\}.  \notag
	\end{align}
	For any $e\in{\mathscr A}_n$ and $\hat e\in {\mathscr A}_m$, we call $e\hat
	e $ \textit{admissible}, denote by 
	\begin{equation}\label{admissible-A-A}
		e\to \hat e, \ \ \text{ if }\ \ \mathbf{t}(e)\to \hat e.
	\end{equation}
	
	For any $\alpha=[a_1,a_2,\cdots]\in{\mathbb{I}}$, define the \textit{symbolic
		space} $\Omega^{\alpha}$  as 
	\begin{eqnarray}\label{Omega^alpha}
		\Omega^{\alpha}:=\Big\{x=x_0x_1x_2\cdots \in\{\mathbf{1},\mathbf{3}\}\times\prod_{n=1}^%
		\infty {\mathscr A}_{a_n}: x_n\to x_{n+1}, n\ge0 \Big\}.
	\end{eqnarray}
	\begin{remark}\label{invarint}
		We warn that the space $\Omega^{\alpha}$ is not invariant under the shift map $\sigma(x)=\sigma((x_n)_{n\geq0})=(x_{n+1})_{n\geq0}$, i.e., $\sigma(\Omega^{\alpha})\not\subset\Omega^{\alpha}$. So we must introduce a new symbolic space $\Omega_{\mathbf{a}}$ to handle the spectral problem, see Sect. \ref{sec-Sturm-1}.
	\end{remark}
	For any $n\geq0$, we write $x|_n:=x_0x_1\cdots x_n$ for $x\in\Omega^{\alpha}$ and define 
	\begin{equation*}  \label{def-Omega^alpha_n}
		\Omega^{\alpha}_{n}:=\{x|_n: x\in \Omega^{\alpha}\}\ \ \text{ and }\ \
		\Omega^{\alpha}_{\ast}:=\bigcup_{n\ge 0} \Omega^{\alpha}_{n}.
	\end{equation*}
	
	Given $w=w_0\cdots w_n\in \Omega^{\alpha}_{n}$, %denote by $|w|$ the length of word $w$, then $|w|=n+1$. 
	define
	\begin{equation}\label{def-tail}
		h_w:=w_0 \ \ \text{and}\ \ \tT(w):=\tT(w_n), 
	\end{equation}
	and we refer to $h_{w}$ as the {\it head} of $w$, and $\tT(w)$
	as the {\it tail type} of $w$, respectively.\footnote{More generally, if $w\in \prod_{j=1}^n \A_{a_j}$, then $h_w=x_1$ and $\tT(w)=\tT(w_n)$; and 
		if $w\in \prod_{j=1}^\infty \A_{a_j}$, then $h_w=w_1$.}

	For pair $({\mathscr A}_n, {\mathscr A}_m),$ we define the \textit{incidence
		matrix} $A_{nm}= (a_{e \hat e})$ of size $(2n+2)\times(2m+2)$ as follows: 
	\begin{equation}\label{A_ij}
		a_{e\hat e}= 
		\begin{cases}
			1, & \text{ if } e\to \hat e, \\ 
			0, & \text{ otherwise}.%
		\end{cases}%
	\end{equation}
	For any $n\in {\mathbb{N}}$, define an
	auxiliary matrix $\hat A_n$ as follows: 
	\begin{equation}\label{hat-A-n}
		\hat A_n= \left[%
		\begin{array}{ccc}
			0 & 1 & 0 \\ 
			n+1 & 0 & n \\ 
			n & 0 & n-1%
		\end{array}%
		\right].
	\end{equation}
	
	\subsubsection{The coding map $\protect\pi^{\protect\alpha}_\protect\lambda$}
	
	\label{sec-coding-map} Given $\lambda>4$ and $\alpha\in{\mathbb{I}}$, now we
	explain that $\Omega^{\alpha}$ is a coding of the spectrum $%
	\Sigma_{\alpha,\lambda}$. 
	
	At first, let
	$B_{\mathbf{1}}^{\alpha}=[\lambda-2,\lambda+2]$ and $B_{\mathbf{3}}^{\alpha}=[-2,2],
	$
	then ${\mathcal{B}}_0^{\alpha}=\{B_w^{\alpha}: w\in \Omega_0^{\alpha}\}$ by \eqref{B-0}.
	
	Assume $B_w^{\alpha}$ is defined for any $w\in\Omega_{n-1}^{\alpha} $ ($n\geq 1$) and
	$
	{\mathcal{B}}_{n-1}^{\alpha}=\{B_w^{\alpha}: w\in
	\Omega_{n-1}^{\alpha}\}.
	$
	Then for given $w\in\Omega_n^{\alpha}$, write 
	$
	w^{\prime }:=w|_{n-1}, w_n:=(\mathbf{t},j)_{a_n},
	$
	and define $B_w^{\alpha}$ to be the unique $j$-th band of $(n,\mathbf{t}%
	)$-type in ${\mathcal{B}}_{n}^{\alpha}$ which is contained in $B_{w^\prime}^{\alpha}$. By Proposition \ref{basic-struc} (2)-(4), $%
	B_w^{\alpha}$ is well-defined for every $w\in \Omega_n^{\alpha}$
	and 
	$
	{\mathcal{B}}_{n}^{\alpha}=\{B_w^{\alpha}: w\in
	\Omega_{n}^{\alpha}\}.
	$
	
	By induction, we can code all the bands in ${\mathcal{B}}_n^{\alpha}$
	by $\Omega_n^{\alpha}.$ Now we can define a natural map $\pi^{\alpha}_\lambda:\Omega^{\alpha}\to
	\Sigma_{\alpha,\lambda}$ as 
	\begin{equation}  \label{pi^a}
		\pi^{\alpha}_\lambda(x):=\bigcap_{n\ge 0} B_{x|_n}^{\alpha}.
	\end{equation}
	By Proposition \ref{basic-struc}, it is seen that $\pi^{\alpha}_\lambda$ is
	a bijection, thus $\Omega^{\alpha}$ is a \textit{coding} of $%
	\Sigma_{\alpha,\lambda}$. 
	
	Indeed, we have the following
	\begin{proposition}\cite[Proposition 5.2]{Qu2018}\label{bi-lip-alpha}
		There exists a weak-Gibbs metric on $\Omega^{\alpha}$, defined as
		\begin{equation*}
			d_{\alpha}(x,y):=|B^{\alpha}_{x\wedge y}|
		\end{equation*}
		such that $\pi^{\alpha}_{\lambda}:(\Omega^{%
			\alpha},d_{\alpha})\to(\Sigma_{\alpha,\lambda},|\cdot|)$ is a bi-Lipschitz homeomorphism. 
	\end{proposition}
	\begin{remark}
		From the above proposition, we can transform the spectral problem of $\Sigma_{\alpha,\lambda}$ to the study on the symbolic space $\Omega^{\alpha}$. However, as indicated in Remark \ref{invarint}, the symbolic space $\Omega^{\alpha}$ is not invariant; therefore, in what follows, we often work on the symbolic space $\Omega_{\mathbf{a}}$, which is a coding of $\Sigma_{\check{\alpha},\lambda},\check{\alpha}=[1,\overline{a_1,a_2,\cdots,a_k}]$, see Proposition \ref{Omega-Sigma}.
	\end{remark}
	%
	%%%%%%%%%%%%%%%%%
	%%%%%%%%%%%%%%%%%
	%%%%%%%%%%%%%%%%%
	\subsection{Useful results for the Sturm Hamiltonian}\
	
	In this subsection, we collect some useful results of the Sturm Hamiltonian for later use.
	
	The following lemma provides estimates for the length of the spectral generating band:
	
	\begin{lemma}[\text{\cite[Lemma 3.7]{LQW2014}}]\label{esti-band-length}
		Assume that $\lambda >20$ and $\alpha =[a_{1},a_{2},\cdots ]\in {\mathbb{%
				I}}$. Write $\tau _{1}:=(\lambda -8)/3$ and $\tau
		_{2}:=2(\lambda +5)$. Then for any $w=w_{0}w_1\cdot \cdot \cdot w_{n}\in \Omega
		_{n}^{\alpha }$, we have 
		\begin{equation*}
			\tau _{2}^{-n}\prod_{i=1}^{n}a_{i}^{-3}\cdot \prod_{1\leq i\leq n;\mathbf{t}(w_{i})=\mathbf{2}}\tau _{2}^{2-a_{i}} \leq |B_{w}^{\alpha }|\leq
			4\tau _{1}^{-n}\Big( \prod_{1\leq i\leq n;\mathbf{t}(w_{i})=\mathbf{2}%
			}\tau _{1}^{2-a_{i}}\Big).
		\end{equation*}
		In particular, we have
		$
		|B_{w}^{\alpha}|\leq2^{2-n}.
		$
	\end{lemma}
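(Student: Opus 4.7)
The plan is to reduce the band length estimate to a two-sided derivative bound on the generating polynomial and then propagate that bound through the band-generation recursion of Proposition \ref{basic-struc}. Concretely, since the generating polynomial $h_{(n,p)}$ is monotone on the band $B_w^\alpha$ with $h_{(n,p)}(B_w^\alpha)=[-2,2]$, the mean value theorem gives
\begin{equation*}
    |B_w^\alpha|=\frac{4}{|h'_{(n,p)}(\xi)|}\quad\text{for some }\xi\in B_w^\alpha,
\end{equation*}
so the entire problem becomes one of sandwiching $|h'_{(n,p)}|$ between two explicit products of $\tau_1$ and $\tau_2$.

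The main engine is the Cayley--Hamilton type identity $M_{n+1}(E)=M_{n-1}(E)M_n(E)^{a_{n+1}}$ together with the Fibonacci trace map $\mathbf{T}(x,y,z)=(2xy-z,x,y)$: these give explicit polynomial identities expressing $h_{(n+1,p)}$ in terms of $h_{(n-1,\cdot)}$ and $h_{(n,\cdot)}$, and expressing the trace of $M_n^{a_{n+1}}$ by a Chebyshev polynomial of degree $a_{n+1}$ in $\operatorname{tr}M_n$. Differentiating these identities and evaluating on the spectrum yields recursive two-sided inequalities for $|h'_{(n+1,p)}|$. The hypothesis $\lambda>20$ enters here to guarantee that $\tau_1=(\lambda-8)/3>4$, which both enforces strict hyperbolic expansion and separates the relevant trace values from the critical value $\pm 2$ on $\Lambda_\lambda$.

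I would then run an induction on $n$ by tracking the multiplicative contribution at each transition $w_i\to w_{i+1}$ of the coding sequence. Following the three transition rules of Proposition \ref{basic-struc}, the step $(n,\mathbf{1})\to(n+1,\mathbf{2})$ contributes a uniformly bounded factor; the step $(n,\mathbf{3})\to(n+1,\cdot)$ contributes a derivative factor in $[\tau_1,\tau_2]$; and the step $(n,\mathbf{2})\to(n+1,\cdot)$ passes through $M_n^{a_{n+1}}$ and therefore contributes a factor of order $\tau^{a_{n+1}-1}$ on each of the $2a_{n+1}\pm 1$ sub-bands. Multiplying these factors over $0\le i\le n$ produces the base term $\tau^{-n}$ together with the extra products $\prod_{\mathbf{t}(w_i)=\mathbf{2}}\tau^{2-a_i}$ in the stated bounds. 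The ``in particular'' inequality $|B_w^\alpha|\le 2^{2-n}$ then follows from the upper bound because $\tau_1>4$ and the admissibility rules \eqref{admissible-T-A} forbid two consecutive $\mathbf{2}$-type symbols, so the potentially bad factor $\tau_1^{2-a_i}$ (when $a_i=1$) is always dominated by the adjacent $\tau_1^{-1}$ coming from the preceding $\mathbf{1}$-step.

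The principal obstacle is the $(n,\mathbf{2})$-type transition for the \emph{lower} bound. One has to localize each of the $2a_{n+1}\pm 1$ sub-bands precisely enough inside its parent $(n,\mathbf{2})$-band so that the derivative of the Chebyshev polynomial of degree $a_{n+1}$ is controlled below; and since the Chebyshev derivative can drop by a polynomial amount in $a_{n+1}$ between its extrema, this forces a uniform loss of order $a_{n+1}^{O(1)}$ per transition. A safe uniform choice across all three transition types gives the factor $a_i^{-3}$ appearing in the lower bound of the lemma. Except for the bookkeeping in this last step, the remainder of the argument is routine induction once the derivative recursion is in place.
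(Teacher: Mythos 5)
First, a point of reference: the paper does not prove this lemma at all --- it is imported verbatim from \cite[Lemma 3.7]{LQW2014} --- so your sketch has to be measured against the proof in that reference. Your overall architecture (reduce $|B_w^\alpha|$ to $4/|h_{(n,p)}'(\xi)|$ by the mean value theorem, then control the derivative inductively through the transition rules of Proposition \ref{basic-struc} using the recursion $\mathrm{tr}(M_{n-1}M_n^{p+1})=\mathrm{tr}(M_n)\,\mathrm{tr}(M_{n-1}M_n^{p})-\mathrm{tr}(M_{n-1}M_n^{p-1})$) is indeed the standard strategy of \cite{LQW2014} and its predecessors. But as written, the proposal black-boxes exactly the steps that constitute the lemma: the constants $\tau_1=(\lambda-8)/3$ and $\tau_2=2(\lambda+5)$ are never derived, and they are the whole content. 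They come from a case analysis showing that, on each spectral generating band, the traces \emph{not} defining that band are pinned between roughly $\lambda-8$ and $\lambda+5$ in absolute value; this is where $\lambda>20$ is used. Your appeal to hyperbolicity of $\Lambda_\lambda$ is off-target here: the trace-map attractor is a tool for the Fibonacci (and periodic) case only, whereas this lemma holds for arbitrary $\alpha\in\mathbb{I}$ and is proved purely by trace estimates. Similarly, asserting that the Chebyshev derivative loss is ``$a_{n+1}^{O(1)}$, and a safe choice is $a_i^{-3}$'' does not establish the stated lower bound with the exponent $3$; one must actually track the recursion over all $p\in\{-1,0,\dots,a_i\}$.

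Second, your derivation of the ``in particular'' bound $|B_w^\alpha|\le 2^{2-n}$ from the displayed upper bound has a concrete gap at the boundary. Write the upper bound as $4\prod_{i=1}^{n}c_i$ with $c_i=\tau_1^{-1}$ if $\mathbf{t}(w_i)\neq\mathbf{2}$ and $c_i=\tau_1^{1-a_i}$ if $\mathbf{t}(w_i)=\mathbf{2}$. A type-$\mathbf{2}$ symbol with $a_i=1$ gives $c_i=1$, and your pairing with the preceding type-$\mathbf{1}$ step only produces $c_{i-1}c_i=\tau_1^{-1}<2^{-2}$ when $i\ge 2$. But $i=1$ is admissible (if $w_0=\mathbf{1}$ then $w_1$ is forced to be of type $\mathbf{2}$ by \eqref{admissible-T-A}), and there is no factor $c_0$ to absorb $c_1=1$; the argument then only yields $|B_w^\alpha|\le 2^{3-n}$ once $\tau_1$ is close to $4$, i.e.\ for $\lambda$ close to $20$. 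So the crude bound cannot be read off from the refined product estimate in the way you claim; in the literature it is obtained by a separate, cruder induction on the derivative (each step at least doubles $|h'|$), not as a corollary of the main display.
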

	
	The following proposition is \cite[Theorem 5.1]{FLW2011}, see also \cite[Theorem 3.3]{LQW2014}.
	
	\begin{proposition}[Bounded covariation]
		\label{bco-cor} Let $\lambda>20,\alpha=[a_1,a_2,\cdots],\beta=[b_1,b_2,%
		\cdots]$ be irrational with $a_n,b_n$ bounded by $M$. Then, 
		there exists $\eta=\eta(\lambda,M)>1$ such that for any $w,wu\in\Omega_\ast^{\alpha}$
		and $\widetilde{w}, \widetilde{w}u\in \Omega_\ast^{\beta}$,
		\begin{equation*}
			\eta^{-1} \frac{|{B}_{wu}^{\alpha}|}{|{B}_{w}^{\alpha}|}%
			\le \frac{|B_{\widetilde{w}u}^{\beta}|}{|B_{\widetilde{w}}^{\beta}|%
			}\le \eta \frac{|{B}_{wu}^{\alpha}|}{|{B}_{w}^{\alpha}|}.
		\end{equation*}
	\end{proposition}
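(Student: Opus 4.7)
The plan is to reduce comparisons of band-length ratios across the two frequencies to a symbolic statement depending only on the common word $u$, by exploiting the analytic structure of the generating trace polynomials. Each band $B_w^{\alpha}$ is a connected component of $\{E : |h_w(E)| \le 2\}$ where $h_w$ is a generating trace polynomial, monotone on $B_w^{\alpha}$ with image $[-2,2]$. By the mean value theorem, $|B_w^{\alpha}| = 4/|h_w'(\xi_w)|$ for some $\xi_w \in B_w^{\alpha}$, so
\begin{equation*}
\frac{|B_{wu}^{\alpha}|}{|B_w^{\alpha}|} = \frac{|h_w'(\xi_w)|}{|h_{wu}'(\xi_{wu})|}.
\end{equation*}
The target reduces to showing this quotient depends, up to a multiplicative factor $\eta(\lambda, M)$, only on $u$ and the tail type $\tT(w)$, which are data shared by $\alpha$ and $\beta$ since $wu$ and $\tilde w u$ are both admissible.

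First I would unwind the trace-map recursion producing $h_{wu}$ from $h_w$ by iterating the Sturmian transfer matrices across the partial quotients indexed by $u$. The recursions over a single letter of $u$ are Chebyshev-like and entangle three consecutive generations of trace polynomials, with Proposition \ref{basic-struc} describing precisely how the $(n+1,\mathbf{1})$- and $(n+1,\mathbf{3})$-type bands interlace inside the relevant $(n,\mathbf{t})$-type band. Iterating across $u$ yields on $B_w^{\alpha}$ a factorization of the form $h_{wu}(E) = Q_u(h_w(E)) + r_u(E)$, where $Q_u$ is a polynomial whose structure depends only on $u$ and on the partial quotients it indexes, and $r_u$ is a remainder controlled uniformly on $B_w^{\alpha}$ by a constant depending only on $\lambda$ and $M$. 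Differentiating gives $h_{wu}'(E) \asymp Q_u'(h_w(E)) \cdot h_w'(E)$ on $B_w^{\alpha}$.

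Second I would establish bounded-distortion estimates for $h_w$ and for $Q_u' \circ h_w$ on $B_w^{\alpha}$: ratios such as $|h_w'(\xi_1)|/|h_w'(\xi_2)|$ must be bounded above and below by constants depending only on $\lambda$ and $M$. These follow from the hyperbolicity of the trace map on the non-wandering set $\Lambda_{\lambda}$ (recalled in the introduction), which provides uniform expansion on the relevant base bands, together with the observation that $u$ involves only finitely many partial quotients, each bounded by $M$. Lemma \ref{esti-band-length} provides the quantitative backbone for these estimates at the coarsest level. Combining the two steps, $|B_{wu}^{\alpha}|/|B_w^{\alpha}|$ equals $1/|Q_u'(c)|$ up to the factor $\eta(\lambda,M)$, for some value $c$ in the range of $h_w$ determined only by the symbolic data of $u$ and $\tT(w)$. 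Running the same argument for $\beta$ with the same $u$ yields a comparable expression, whence the two ratios match up to $\eta$.

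The main obstacle is making the factorization in the first step quantitatively precise while keeping all error terms uniformly controlled. The Chebyshev-like recursion over a single letter of $u$ with partial quotient $a_i \le M$ is manageable, but tracking accumulated errors across all of $u$, particularly the contributions from the neighboring trace polynomials entering the three-term recursion, requires delicate bookkeeping. The partial-quotient bound $M$ is essential here: without it the recursive factors would scale unboundedly with $a_i$ and $\eta$ would blow up. The hypothesis $\lambda > 20$ enters through Lemma \ref{esti-band-length} and the uniform hyperbolicity of the trace map, both of which degenerate for small $\lambda$.
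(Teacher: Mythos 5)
First, note that the paper does not prove this proposition at all: it is quoted from \cite[Theorem 5.1]{FLW2011} (see also \cite[Theorem 3.3]{LQW2014}), with the remark that the two-frequency version follows by inspecting the proof given there. So your attempt has to be measured against that cited argument, which proceeds by direct estimates on the trace polynomials and their derivatives.

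Your overall strategy (reduce $|B_{wu}^{\alpha}|/|B_w^{\alpha}|$ to a ratio of derivatives of generating polynomials via the mean value theorem, then isolate a factor determined only by $u$) is in the right spirit, but the central step is wrong as stated. The factorization $h_{wu}(E)=Q_u(h_w(E))+r_u(E)$ with an additively small remainder does not hold. By Cayley--Hamilton, $h_{wu}$ is a fixed polynomial, determined by $u$ and the relevant partial quotients, in the \emph{three} Fricke coordinates $\mathrm{tr}(M_{n-1}M_n)$, $\mathrm{tr}(M_n)$, $\mathrm{tr}(M_{n-1})$, and the dependence on the two traces other than $h_w$ is not a perturbation: on a band of one type the other traces are of size comparable to powers of $\lambda$ and their derivatives can dominate $h_w'$. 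The entire content of the cited proof is the \emph{multiplicative} control of the derivative ratios through the three-term trace recursion $h_{(n,p+1)}=h_{(n,1)}h_{(n,p)}-h_{(n,p-1)}$ (and its cross-generation analogue), which is exactly the ``delicate bookkeeping'' you defer; bounding an additive remainder $r_u$ by a constant would not give a two-sided bound on a ratio of derivatives. Moreover, your appeal to uniform hyperbolicity of the trace map on $\Lambda_\lambda$ is misplaced here: that structure is specific to the Fibonacci/periodic case recalled in the introduction, whereas the proposition concerns two arbitrary bounded-type frequencies $\alpha\neq\beta$, for which no single hyperbolic trace-map attractor is available and the bounded-distortion estimate for $h_w'$ on $B_w^{\alpha}$ must itself be proved by polynomial estimates using $\lambda>20$ and $a_n\le M$ (this is a nontrivial lemma of \cite{FLW2011}, not a consequence of Lemma \ref{esti-band-length}). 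As it stands the proposal is a plan whose key identity fails, so it does not constitute a proof.
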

	
	We remark that in \cite{FLW2011}, only the case $\alpha=\beta$ is considered. However by checking the proof, one can indeed show the stronger result as stated in Proposition \ref{bco-cor}. %The original proposition is stated for the modified ladders (see \cite[Definition 3]{FLW2011}). 
	
	The following proposition connects the longest spectral generating band and the transport exponents:
	\begin{proposition}[\text{\cite[Proposition 4.8(c) and Proposition 5.1]{DGLQ2015}}]\label{trans}
		Let $\lambda>20$. Assume $\alpha=[a_1,a_2,\cdots]\in\I$ is such that $\lim\limits_{n\to\infty}\dfrac{1}{n}\log q_n(\alpha)$ exists and $\limsup\limits_{n\to\infty}\dfrac{1}{n}\sum\limits_{k=1}^{n}
		a_k<\infty$, then 
		\begin{equation*}
			\frac{\lim\limits_{n\to\infty}\frac{1}{n}\log q_n(\alpha)}{%
				-\liminf\limits_{n\to\infty}\frac{1}{n}\log|B^{\alpha}_{n,\max}|}\leq
			\mathcal{T}^-(\alpha,\lambda)\leq\mathcal{T}^{+}(\alpha,\lambda)\leq%
			\frac{\lim\limits_{n\to\infty}\frac{1}{n}\log q_n(\alpha)}{%
				-\limsup\limits_{n\to\infty}\frac{1}{n}\log|B^{\alpha}_{n,\max}|},
		\end{equation*}
		where $|B^{\alpha}_{n,\max}|=\max\{|B^{\alpha}_w|:w\in\Omega^{\alpha}_n\}$ is the longest spectral generating band of order $n$. 
	\end{proposition}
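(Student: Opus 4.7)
The plan is to establish the upper and lower bounds separately via the Damanik--Tcheremchantsev (DT) framework for time-averaged transport moments, in each case matching a spatial scale $\ell_n:=q_n(\alpha)$ with a time scale $T_n\asymp|B^{\alpha}_{n,\max}|^{-1}$. The scale $q_n$ is the natural length produced by the continued fraction renormalization (see \eqref{q-n}) and $|B^{\alpha}_{n,\max}|$ is the optimal energy resolution of the spectrum at level $n$ obtained from Raymond's covering (Proposition \ref{basic-struc}). Both directions then reduce to converting this scale-matching into a bound on $\beta^{\pm}(p,\lambda)$ that is uniform in $p$, so that the limit defining $\mathcal{T}^{\pm}(\alpha,\lambda)$ can be taken without loss.

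For the upper bound on $\mathcal{T}^{+}$, I would invoke the DT ``outside--the--ball'' estimate, which controls
\begin{equation*}
\frac{2}{T}\int_{0}^{\infty}e^{-2t/T}\sum_{|k|\geq \ell}|\langle e^{-itH_{\alpha,\lambda,\theta}}\delta_{0},\delta_{k}\rangle|^{2}\,dt
\end{equation*}
by a weighted integral of $\|M_{\pm\ell}(E)\|^{-2}$ over an energy window of width $\asymp 1/T$. Choosing $\ell=\ell_n$ and $T=T_n$, the set of $E$ on which $\|M_{q_n}(E)\|$ fails to be large is precisely the union $\bigcup_{w\in\Omega_n^{\alpha}}B_w^{\alpha}$ of order-$n$ generating bands; under the hypothesis $\limsup n^{-1}\sum a_k<\infty$, the cardinality of this union grows only subexponentially by Proposition \ref{basic-struc}(3)--(4), and its total energy mass is dominated by $|B^{\alpha}_{n,\max}|\cdot e^{o(n)}$ thanks to the bounded covariation Proposition \ref{bco-cor}. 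Combining this with the crude ballistic estimate $\langle|X|^p\rangle(t)\leq \ell_n^p$ on the inside part yields $\beta^{+}(p,\lambda)\leq \log\ell_n/\log T_n + o(1)$ for every $p>0$; sending $p\to\infty$ and then $n\to\infty$ produces the stated upper bound after taking $\limsup$.

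For the lower bound on $\mathcal{T}^{-}$, I would use that on the longest generating band every $E\in B^{\alpha}_{n,\max}$ satisfies $|\operatorname{tr}M_{q_n}(E)|\leq 2$, whence by standard trace--map arguments for the Sturm Hamiltonian all intermediate transfer matrices $M_j(E)$ with $0\leq j\leq q_n$ are uniformly bounded in norm on $B^{\alpha}_{n,\max}$. Plugging this input into the Parseval/Plancherel based DT lower bound (in the spirit of DGLQ Proposition 5.1) at time scale $T_n\asymp|B^{\alpha}_{n,\max}|^{-1}$ yields
\begin{equation*}
\frac{2}{T_n}\int_{0}^{\infty}e^{-2t/T_n}\langle|X|^p\rangle(t)\,dt\,\geq\, c\,\ell_n^{\,p}
\end{equation*}
for every $p>0$, hence $\beta^{-}(p,\lambda)\geq \log\ell_n/\log T_n$. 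Letting $p\to\infty$ and taking $\liminf$ along $n$ then produces the desired lower bound for $\mathcal{T}^{-}$, where the existence of $\lim n^{-1}\log q_n$ (a consequence of the hypothesis on the partial quotients, via the L\'evy constant) ensures that the numerator of the right-hand side is a genuine limit.

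The main obstacle is the upper bound: one must verify that the dominant contribution to the DT integral really comes from a neighborhood of the longest band, and that the many shorter bands contribute only a subexponential correction. Making this quantitative requires effective lower bounds on $\|M_{q_n}(E)\|$ off the longest band, and an honest accounting of the total length of the order-$n$ cover; this is exactly where the coupling hypothesis $\lambda>20$ enters through Lemma \ref{esti-band-length}, and where the growth condition $\limsup n^{-1}\sum a_k<\infty$ is needed to make Proposition \ref{bco-cor} yield a $|B^{\alpha}_{n,\max}|\cdot e^{o(n)}$ bound rather than merely $e^{O(n)}$, which would destroy the exponential rate in the final answer.
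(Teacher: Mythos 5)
The paper itself gives no proof of this proposition: it is imported verbatim from \cite[Proposition 4.8(c) and Proposition 5.1]{DGLQ2015} and used as a black box (what the present paper actually proves, in Section \ref{sec-main}, is that for eventually periodic $\alpha$ the limit $\lim_l\frac1l\log|B^{\alpha}_{l,\max}|$ exists, so that the two bounds coincide). There is therefore no in-paper argument to compare yours against; you are reconstructing the proof from the cited reference. Your overall frame --- the Damanik--Tcheremchantsev moment bounds with the scale matching $\ell_n=q_n(\alpha)$, $T_n\asymp|B^{\alpha}_{n,\max}|^{-1}$ --- is indeed the strategy of that reference, and your lower-bound sketch is essentially right, modulo the fact that the transfer matrices on a generating band of order $n$ are only polynomially bounded in $q_n$, not uniformly bounded; this costs only $o(n)$ in the exponent and is harmless.

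The upper-bound half, however, has a genuine gap. The DT outside-the-ball estimate requires lower bounds on $\|M_{\ell}(E+i/T)\|$ at \emph{complex} energy $E+i/T$, uniformly over all real $E$ in a compact window; the whole point of choosing $T_n\asymp|B^{\alpha}_{n,\max}|^{-1}$ is that the imaginary part then exceeds the length of every order-$n$ band, so that \emph{every} complex energy escapes under the complexified trace-map iteration within $o(n)$ further renormalization steps and $\|M_{q_N}(E+i/T_n)\|$ becomes super-polynomially large in $T_n$ for $N=n+o(n)$. Your real-energy decomposition into ``on the bands / off the bands'' does not work: the number of order-$n$ bands is $\asymp q_n+q_{n+1}$, i.e.\ exponentially large (each type-$\mathbf 2$ or type-$\mathbf 3$ band spawns $\asymp 2a_{n+1}$ children by Proposition \ref{basic-struc}), not subexponential as you claim, and bounded covariation gives no bound of the form $|B^{\alpha}_{n,\max}|e^{o(n)}$ on the total length of the cover; even the correct bound $\lesssim q_n|B^{\alpha}_{n,\max}|$, inserted into the DT integral, yields nothing useful. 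Two smaller errors: the existence of $\lim_n\frac1n\log q_n(\alpha)$ is a hypothesis of the proposition, not a consequence of $\limsup_n\frac1n\sum_{k\le n}a_k<\infty$ (Ces\`aro-bounded partial quotients do not force a L\'evy constant to exist); and Proposition \ref{bco-cor} as stated requires bounded, not Ces\`aro-bounded, partial quotients.
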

	
	The following lemma is a characterization of the DOS:
	\begin{lemma}[\text{\cite[Proposition 5.5]{CQ2023}}]\label{dos-measure} 
		Let $\lambda>20$ and $\alpha=[a_1,a_2,\cdots]\in\I$ with $a_n$ bounded by $M$. Then there exists a constant $C=C(M)>1$ such that for any $w\in\Omega^{\alpha}_n$, we have 
		\begin{equation*}  \label{def-C-i}
			\frac{C^{-1}}{q_{n}(\alpha)}\leq\mathcal{N}_{\alpha,\lambda}(B^{\alpha}_{w})\leq\frac{C}{q_{n}(\alpha)}.
		\end{equation*}
	\end{lemma}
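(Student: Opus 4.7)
The plan is to prove the uniform two-sided estimate $\mathcal{N}_{\alpha,\lambda}(B_w^{\alpha})\asymp 1/q_n$ by combining a combinatorial count of descendant bands with a quantitative version of the classical periodic-approximation identity for the density of states. Throughout, I would fix $M$ so that $a_i\le M$ for all $i$ and track the dependence of constants on $M$.

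First, I would establish the combinatorial count: for all $m\ge n$ and any $w\in\Omega^{\alpha}_n$,
\begin{equation*}
C_1^{-1}\,\frac{q_m(\alpha)}{q_n(\alpha)}\ \le\ \#\{w'\in\Omega^{\alpha}_m:w'|_n=w\}\ \le\ C_1\,\frac{q_m(\alpha)}{q_n(\alpha)},
\end{equation*}
where $C_1=C_1(M)$. This is done by induction on $m-n$: Proposition \ref{basic-struc}(2)--(4) says that a $(j,\mathbf{t})$-type band produces $1$, $2a_{j+1}+1$, or $2a_{j+1}-1$ children according as $\mathbf{t}\in\{\mathbf{1},\mathbf{2},\mathbf{3}\}$. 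Setting up a $3\times 3$ transfer matrix driven by $a_{j+1}$ and comparing with the recursion \eqref{q-n} for $q_n$ gives the estimate, with multiplicative losses at each step bounded by a function of $M$.

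Second, I would prove the ``base case'' $\mathcal{N}_{\alpha,\lambda}(B_{w'}^{\alpha})\asymp 1/q_m$ for every $w'\in\Omega^{\alpha}_m$ via periodic approximation. Let $\alpha_m=p_m/q_m$ be the $m$-th convergent; then $H_{\alpha_m,\lambda,\theta}$ is periodic with period $q_m$, so by Floquet theory the spectrum $\Sigma_{\alpha_m,\lambda}$ is a union of $q_m$ closed bands, each of which carries mass exactly $1/q_m$ under $\mathcal{N}_{\alpha_m,\lambda}$. The spectral generating band $B_{w'}^{\alpha}$ is cut out by the polynomial $h_{(m,0)}$, which (up to a controlled perturbation) coincides with the Floquet discriminant of $H_{\alpha_m,\lambda}$, so $B_{w'}^{\alpha}$ is close to a single periodic band. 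Weak convergence $\mathcal{N}_{\alpha_m,\lambda}\Rightarrow\mathcal{N}_{\alpha,\lambda}$ (which follows from strong resolvent convergence of $H_{\alpha_m,\lambda,0}$ to $H_{\alpha,\lambda,0}$ and the fact that $\mathcal{N}_{\alpha,\lambda}$ has no atoms by the singular continuous spectrum result of \cite{DKL2000}) then yields $\mathcal{N}_{\alpha,\lambda}(B_{w'}^{\alpha})\asymp 1/q_m$.

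Finally, I would combine the two steps. Since $\mathcal{N}_{\alpha,\lambda}$ is non-atomic and the basic sets $\{X_{w'}^{\alpha}:w'\in\Omega^{\alpha}_m,\,w'|_n=w\}$ partition $X_w^{\alpha}=B_w^{\alpha}\cap\Sigma_{\alpha,\lambda}$,
\begin{equation*}
\mathcal{N}_{\alpha,\lambda}(B_w^{\alpha})=\sum_{\substack{w'\in\Omega^{\alpha}_m\\ w'|_n=w}}\mathcal{N}_{\alpha,\lambda}(B_{w'}^{\alpha})\ \asymp\ \#\{w'\in\Omega^{\alpha}_m:w'|_n=w\}\cdot\frac{1}{q_m}\ \asymp\ \frac{1}{q_n},
\end{equation*}
with constants depending only on $M$, proving the lemma. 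The principal obstacle is the second step: making the periodic-approximation comparison quantitative and uniform in $m$. This is where Proposition \ref{bco-cor} (bounded covariation) enters decisively, since the periodic frequency $\alpha_m=[a_1,\dots,a_m,\overline{1}]$ has partial quotients bounded by $\max(M,1)$, allowing us to transfer length and measure ratios between the Sturmian frequency $\alpha$ and its convergent $\alpha_m$ with a constant that does not degrade as $m\to\infty$.
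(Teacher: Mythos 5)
First, a contextual note: the paper does not prove this lemma at all --- it is quoted verbatim from \cite[Proposition 5.5]{CQ2023}, with only a remark that the range of $\lambda$ can be relaxed from $\lambda\ge 24$ to $\lambda>20$ for bounded-type $\alpha$. So your proposal must be judged on its own merits rather than against an in-paper argument.

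Your three ingredients (the combinatorial count of descendant bands, the exact mass $1/q$ per band of the periodic approximant, and weak convergence of the DOS) are exactly the right ones, and your Step 1 is correct: the $3\times3$ transfer-matrix count against the recursion \eqref{q-n} does give $\#\{w'\in\Omega^{\alpha}_m:w'|_n=w\}\asymp q_m/q_n$ with constants depending only on $M$. But the assembly has a genuine gap, concentrated in Step 2. As stated, Step 2 \emph{is} the lemma (take $m=n$), so Steps 1 and 3 add nothing once Step 2 is granted; and the proposed proof of Step 2 does not close. Weak convergence $\mathcal{N}_{\alpha_{m'},\lambda}\Rightarrow\mathcal{N}_{\alpha,\lambda}$ controls $\mathcal{N}_{\alpha_{m'},\lambda}(B)\to\mathcal{N}_{\alpha,\lambda}(B)$ for a \emph{fixed} continuity set $B$ as $m'\to\infty$; it gives no comparison between $\mathcal{N}_{\alpha_m,\lambda}(B^{\alpha}_{w'})$ and $\mathcal{N}_{\alpha,\lambda}(B^{\alpha}_{w'})$ at the \emph{matching} level $m$, and the quantity you are estimating is itself of size $1/q_m\to0$, so the unquantified weak-convergence error can swamp it. Bounded covariation (Proposition \ref{bco-cor}) cannot rescue this: it controls ratios of band \emph{lengths}, not DOS masses, so the closing sentence of your proposal is off target.

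The fix is to reassemble your own ingredients in the opposite order: fix $w\in\Omega^{\alpha}_n$ and apply weak convergence to the single fixed band $B^{\alpha}_w$ with the approximant level $m'\to\infty$. The period-$q_{m'}$ approximant assigns mass exactly $1/q_{m'}$ to each of its $q_{m'}$ bands, and the bands of $\sigma_{(m'+1,0)}\cup\sigma_{(m',0)}$ lying in $B^{\alpha}_w$ are, up to bounded discrepancies (types $\mathbf 1$ versus $\mathbf 2,\mathbf 3$ live at adjacent periodic levels, but $q_{m'+1}/q_{m'}\le M+1$), in bijection with the order-$m'$ descendants of $w$; your Step 1 counts these as $\asymp q_{m'}/q_n$. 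Hence $\mathcal{N}_{\alpha_{m'},\lambda}(B^{\alpha}_w)\asymp 1/q_n$ uniformly in $m'$, and letting $m'\to\infty$ (using non-atomicity of $\mathcal{N}_{\alpha,\lambda}$ to handle band endpoints) yields the lemma. With that rearrangement your argument is the standard one; as written, Step 2 does not follow from what you invoke.
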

	The following proposition characterizes the local structure of $\Sigma_{\alpha,\lambda}$ and $\mathcal{N}_{\alpha,\lambda}$:
	\begin{proposition}[\text{\cite[Lemma 5.3 and Proposition 5.7]{CQ2023}}]\label{geo-lem-tail}
		Let $\lambda>20$, $\alpha=[a_1,a_2,\cdots],\beta=[b_1,b_2,%
		\cdots]$ be irrational with $a_n,b_n$ bounded by $M$ such
		that $G^n(\alpha)=G^m(\beta)$ for some $n,m\in {\mathbb{N}}$, where $G$ is the Gauss map defined by $G(x)=1/x-[1/x]$ (where $[\cdot]$ is the integer part). If $l\ge0$
		and $u\in \Omega^{\alpha}_{n+l}, v\in \Omega^{\beta}_{m+l}$ are such that $		\mathbf{t}(u)=\mathbf{t}(v)$, then we have
		
		(1)  There exists a natural bi-Lipschitz map $\tau_{uv}:X_u^{\alpha}\to X_v^{\beta}$, where $X^{\alpha}_w$ is the \textit{basic set} of $\Sigma_{\alpha,\lambda}$, defined as 
		\begin{equation}  \label{def-basic-set}
			X_w^{\alpha}:=\pi^{\alpha}_\lambda([w]^{\alpha}),\quad [w]^{\alpha}:=\{x\in \Omega^{\alpha}: x|_n=w\}.
		\end{equation}
		
		(2) $(\tau_{uv})_*(\mathcal{N}_{\alpha,\lambda}|_{X_u^\alpha})\asymp \mathcal{N}_{\beta,\lambda}|_{X_v^\beta}$. In particular, for any $x\in X_u^\alpha$, we have
		\begin{equation*}\label{equal-loc-dim}
			\underline{d}_{\mathcal{N}_{\alpha,\lambda}}(x)=\underline{d}_{\mathcal{N}_{\beta,\lambda}}(\tau_{uv}(x));\ \ \
			\overline{d}_{\mathcal{N}_{\alpha,\lambda}}(x)=\overline{d}_{\mathcal{N}_{\beta,\lambda}}(\tau_{uv}(x)).
		\end{equation*}
		Consequently, $\mathcal{N}_{\alpha,\lambda}$ is exact-dimensional if and only if $\mathcal{N}_{\beta,\lambda}$ is exact-dimensional.
	\end{proposition}
	We remark that Lemma \ref{dos-measure} and Proposition \ref{geo-lem-tail} in \cite{CQ2023} were established for $\lambda\geq24$, our verification demonstrates that these results remain valid for $\lambda>20$ if $\alpha$ is bounded-type.
	
	\section{The Sturm Hamiltonian and thermodynamical formalism}\label{sec-Sturm}
	In this section, based on the structure of the spectrum, we connect the Sturm Hamiltonian with thermodynamical formalism. At first, we define the symbolic space $\Omega_{\ma}$  as a counterpart to the spectrum. Next, by using the spectral generating bands, we define the potential and pressure function. Finally, we define an appropriate weak-Gibbs metric and prove that the DOS is exact-dimensional.
	
	Throughout the remainder of this paper, when not explicitly mentioned, we set 
	$\lambda>20,k\in\N,\mathbf{a}=a_1a_2\cdots a_k\in\N^k,\check{\alpha}=[1,\overline{a_1,a_2,\cdots,a_k}]$ as a standing convention.
	\subsection{The topologically mixing symbolic space $(\Omega_{\ma},T_{\mathbf{a}})$}\label{sec-Sturm-1}\ 
	
	We will construct the symbolic space $(\Omega_{\ma},T_{\ma})$ as the subshift of finite type with topologically mixing property. %, and prove that it is essentially the symbolic coding of the spectrum $\Sigma_{\check{\alpha},\lambda}$.
	\subsubsection{The symbolic space $(\Omega_{\ma},T_{\ma})$}
	Define the alphabet $\mathcal{A}_{\mathbf{a}}$ as 
	\begin{equation*}
		\mathcal{A}_{\mathbf{a}}:=\Big\{e_{1}\cdots e_{k}:e_i\in\mathscr{A}_{a_i}, e_{i}\rightarrow e_{i+1},1\leq i<k\Big\}.
	\end{equation*}
	The alphabet set $\mathcal{A}_{\ma}$ captures all admissible words of order $k$ contained in $ {\mathscr A}_{a_1}\times\cdots\times{\mathscr A}_{a_k}$.	
	
	For any $\mv,\mathbf{w}\in \mathcal{A}_{\ma}$, define
	the \textit{incidence matrix} $A_{\ma}=(a_{\mv\mathbf{w}})$ of size $\#\mathcal{A}_{\ma}\times \#\mathcal{A}_{\ma}$
	as
	\begin{equation*}\label{def-A}
		a_{\mv\mathbf{w}}:=%
		\begin{cases}
			1, & \text{ if }\mathbf{t}(\mv)\rightarrow h_{\mathbf{w}}, \\ 
			0, & \text{ otherwise},
		\end{cases}%
	\end{equation*}
	where $\tT(\mv)$ is the tail type of $\mv$ and $h_{\mathbf{w}}$ is the head of $\mathbf{w}$ (see \eqref{def-tail}).
	By equations \eqref{A_ij} and \eqref{hat-A-n}, we define a related
	matrix $\hat{A}_{\ma}:=\hat{A}_{a_k}\cdots\hat{A}_{a_2}\hat{A}_{a_1}$.
	
	Now we construct the symbolic space $\Omega _{\mathbf{a}}$ indexed by $\ma$ as: 
	\begin{align*}
		\Omega _{\mathbf{a}}:&=\Big\{\mathbf{v}=\mathbf{v}_{1}\mathbf{v}_{2}\cdots \in 
		{\mathcal A}_{\mathbf{a}}^{{\mathbb{N}}}:\mathbf{v}_{i}\in \mathcal{A}_{\mathbf{a}},\  \mathbf{v}_{i}\rightarrow \mathbf{v}%
		_{i+1},\ i\in\N\Big\}
	\end{align*}
	\begin{remark}\label{a-Omega-a}
		Intuitively, the set $\Omega_{\ma}$ consists of infinite paths that have elements of set $\mathscr{A}_{a_1}$ as their first letters and are constructed according to the same relations (see \eqref{admissible-T-A} and \eqref{admissible-A-A}); therefore, the symbolic space $\Omega_{\ma}$ has the following representation:
		\begin{equation*}
			\Omega_{\ma}=\Big\{x=x_1x_2\cdots \in\Big({\mathscr A}_{a_1}\times\cdots\times{\mathscr A}_{a_k}\Big)\times\Big({\mathscr A}_{a_1}
			\times\cdots\times{\mathscr A}_{a_k}\Big)\cdots: x_n\to x_{n+1}, n\in\N \Big\}.
		\end{equation*}
	\end{remark}
	
	For any $n\in {\mathbb{N}}$, define the set of \textit{admissible words of
		order $n$} of $\Omega _{\mathbf{a}}$ as 
	\begin{equation*}
		\Omega _{\mathbf{a},n}:=\left\{\mathbf{v}_{1}\mathbf{v}
		_{2}\cdots \mathbf{v}_{n}\in {\mathcal A}_{\mathbf{a}}^n:\  \mathbf{v}_{i}\rightarrow \mathbf{v}%
		_{i+1},\ 1\leq i< n \right\}. 
	\end{equation*}%
	For any $\mathbf{w}\in \Omega _{\mathbf{a},n}$, we define the cylinder $[\mathbf{w}]_{\ma}$ as
	\begin{equation}\label{cylinder-a}
		\lbrack \mathbf{w}]_{\mathbf{a}}:=\{\mv\in \Omega _{\mathbf{a}}:\mv|_{n}=\mathbf{w}\},
	\end{equation}
	where $\mv|_{n}:=\mathbf{v}_{1}\mathbf{v}_{2}\cdots \mathbf{v}_{n}$ is the $n$-th prefix of $\mv$.
	
	Define the shift map $T_{\ma}:\Omega_{\ma}\to\Omega_{\ma}$ as $T_{\ma}(\mv)=T_{\ma}((\mv_n)_{n\in\N})=(\mv_{n+1})_{n\in\N}$.
	Then $(\Omega_{\ma},T_{\ma})$ is the subshift of finite type with alphabet $\mathcal{A}_{\mathbf{a}}$ and incidence matrix  $A_{\mathbf{a}}$. 
	\subsubsection{$\Omega_{\ma}$ is a coding of the spectrum $\Sigma_{\check{\alpha},\lambda}$}
	Note that $\Omega_{\ma}$ and $\Omega^{\alpha}$ are formally similar yet distinct, where $\alpha=[\overline{a_1,a_2,\cdots,a_k}]$. A simple but crucial observation is that, there exists a natural bijection between $\Omega_{\ma}$ and $\Omega^{\check{\alpha}}$, where $\check{\alpha}=[1,\overline{a_1,a_2,\cdots,a_k}]$. 
	
	Define a map $\iota :\Omega _{%
		\mathbf{a}}\rightarrow \{\mathbf{1},\mathbf{3}\}\times \mathscr{A}_{1}\times %
	\mathcal{A}_{\mathbf{a}}^{{\mathbb{N}}}$ as 
	\begin{equation}\label{def-iota0}
		\iota(\mathbf{v}):=\varpi^{\mv}\mv,\ \ \ \text{where}\ \varpi^{\mv}:=\begin{cases}
			\mathbf{1}(\mathbf{2},1)_{1},\  & \mbox{if}\ \ \mathbf{t}(h_{\mv})=\mathbf{1}\ \text{or}\ \mathbf{3}, \\ 
			\mathbf{3}(\mathbf{1},1)_{1},\  & \mbox{if}\ \ \mathbf{t}(h_{\mv})=\mathbf{2}.%
		\end{cases}%
	\end{equation}
	Then, $\iota$ maps $\Omega_{\ma}$ to $\Omega^{\check{\alpha}}$  and satisfies the following property:
	
	\begin{proposition}\label{Omega-Sigma} 
		The map $\iota:\Omega_{\ma}\to\Omega^{\check{\alpha}}$ is
		a bijection. As a consequence, the natural map  $\pi_{\ma,\lambda}:\Omega_{\ma}\to\Sigma_{\check{\alpha},\lambda}$ is a coding map, where $\pi_{\ma,\lambda}=\pi^{\check{%
				\alpha}}_{\lambda}\circ\iota$.
	\end{proposition}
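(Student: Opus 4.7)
The plan is to establish the bijection $\iota:\Omega_{\ma}\to\Omega^{\check{\alpha}}$ by an elementary combinatorial verification, and then deduce the coding property for $\pi_{\ma,\lambda}$ from the already-established bijectivity of $\pi^{\check{\alpha}}_{\lambda}$ in Sect. \ref{sec-coding-map}.

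First I would verify that $\iota(\mv)\in\Omega^{\check{\alpha}}$ for every $\mv\in\Omega_{\ma}$. Since $\check{\alpha}=[1,\overline{a_1,\ldots,a_k}]$, an element of $\Omega^{\check{\alpha}}$ is a sequence $x_0x_1x_2\cdots$ with $x_0\in\{\mathbf{1},\mathbf{3}\}$, $x_1\in\mathscr{A}_1$, and $x_{1+j}\in\mathscr{A}_{a_{((j-1)\bmod k)+1}}$ for $j\ge1$, subject to admissibility at every adjacent pair. The sequence $\iota(\mv)=\varpi^{\mv}\mv_1\mv_2\cdots$ has the correct alphabet profile by design, so the only thing to check is admissibility. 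Admissibility inside each block $\mv_i=e_1^{(i)}\cdots e_k^{(i)}$ is built into the definition of $\mathcal{A}_{\ma}$; admissibility between consecutive blocks $\mv_i$ and $\mv_{i+1}$ follows from the incidence condition $\mv_i\to\mv_{i+1}$ through \eqref{def-A}. The only remaining junctions sit inside $\varpi^{\mv}h_{\mv}$ and are handled by a case analysis on $\mathbf{t}(h_{\mv})$ using \eqref{admissible-T-A}: when $\mathbf{t}(h_{\mv})\in\{\mathbf{1},\mathbf{3}\}$ one uses $\mathbf{1}\to(\mathbf{2},1)_1$ together with $\mathbf{2}\to(\mathbf{1},i)_{a_1}$ and $\mathbf{2}\to(\mathbf{3},i)_{a_1}$; when $\mathbf{t}(h_{\mv})=\mathbf{2}$ one uses $\mathbf{3}\to(\mathbf{1},1)_1$ together with $\mathbf{1}\to(\mathbf{2},1)_{a_1}$, noting that the unique type-$\mathbf{2}$ letter in $\mathscr{A}_{a_1}$ is $(\mathbf{2},1)_{a_1}$.

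Next I would exhibit a two-sided inverse of $\iota$. Given $x=x_0x_1x_2\cdots\in\Omega^{\check{\alpha}}$, the conditions $x_0\in\{\mathbf{1},\mathbf{3}\}$ and $x_1\in\mathscr{A}_1=\{(\mathbf{1},1)_1,(\mathbf{2},1)_1,(\mathbf{3},1)_1\}$ together with the admissibility $x_0\to x_1$ and the list \eqref{admissible-T-A} force the pair $(x_0,x_1)$ to be either $(\mathbf{1},(\mathbf{2},1)_1)$ or $(\mathbf{3},(\mathbf{1},1)_1)$. I then group the remaining symbols into blocks of length $k$, setting $\mv_i:=x_{(i-1)k+2}\cdots x_{ik+1}$ for $i\ge1$. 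The internal admissibility of $x$ guarantees each $\mv_i\in\mathcal{A}_{\ma}$; the admissibility at each position $ik+1\to ik+2$ translates via \eqref{def-A} into $\mv_i\to\mv_{i+1}$; and a direct check at $x_1\to h_{\mv_1}$ shows that the rule \eqref{def-iota0} recovers $\varpi^{\mv}=x_0x_1$. Thus $\mv\in\Omega_{\ma}$ with $\iota(\mv)=x$. Injectivity of $\iota$ is immediate, since the tail of $\iota(\mv)$ already determines $\mv$.

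Finally, because $\pi^{\check{\alpha}}_{\lambda}:\Omega^{\check{\alpha}}\to\Sigma_{\check{\alpha},\lambda}$ is a bijection by the construction recalled in Sect. \ref{sec-coding-map}, the composition $\pi_{\ma,\lambda}=\pi^{\check{\alpha}}_{\lambda}\circ\iota$ is a bijection, which is exactly the statement that $\Omega_{\ma}$ codes $\Sigma_{\check{\alpha},\lambda}$. The only place where anything delicate occurs is in the case analysis at the two initial junctions: one must carefully read \eqref{admissible-T-A} to confirm that the two available values of $\varpi^{\mv}$ exhaust precisely the admissible prefixes, and in particular that a type-$\mathbf{2}$ header uniquely matches the $\mathbf{3}(\mathbf{1},1)_1$ prefix. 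Everything else is bookkeeping dictated by the periodic structure of $\check{\alpha}$.
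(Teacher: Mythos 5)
Your proposal is correct and follows essentially the same route as the paper: check that $\iota$ lands in $\Omega^{\check\alpha}$ via the admissibility rules, observe that the only admissible initial pairs in $\Omega^{\check\alpha}$ are $\mathbf{1}(\mathbf{2},1)_1$ and $\mathbf{3}(\mathbf{1},1)_1$ to get surjectivity, note injectivity, and deduce the coding property from the bijectivity of $\pi^{\check\alpha}_\lambda$. One harmless slip: $\mathscr{A}_1$ also contains $(\mathbf{1},2)_1$, but since that letter is not admissible after $\mathbf{1}$ or $\mathbf{3}$, your conclusion about the forced initial pairs is unaffected.
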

	
	\begin{proof}
		Note that the symbolic space $\Omega^{\check{\alpha}}$ can be expressed as: 
		\begin{eqnarray*}
			\Omega^{\check{\alpha}}=\Big\{\mv_{-1}\mathbf{v}_0\mathbf{v}_1\cdots: \mathbf{v}_{-1}\in \{\mathbf{1},\mathbf{3}\},\
			\mv_0\in\mathscr{A}_{1},\ \mathbf{v}_{i}\in{%
				\mathcal A}_{\mathbf{a}},\ i\geq 1;\ \mathbf{v}_i\to \mathbf{v}%
			_{i+1}, i\ge-1\Big\}.
		\end{eqnarray*}
		By equations \eqref{def-iota0}, \eqref{admissible-A-A} and \eqref{admissible-T-A}, we have that $\iota(\Omega_{\mathbf{a}})\subset \Omega^{\check{\alpha}}$. Utilizing \eqref{def-iota0} and %
		\eqref{admissible-T-A} once more, one can observe that 
		\begin{equation*}
			\left\{\mv_{-1}\mv_{0}:\mv=\mv_{-1}\mv_0\mv_1\mv_2\cdots\in\Omega^{%
				\check{\alpha}}\right\}=\left\{\mathbf{1}(\mathbf{2},1)_1,\mathbf{3}(\mathbf{%
				1},1)_1\right\}.
		\end{equation*}
		So for any $\mv=\mv_{-1}\mv_0\mv_1\cdots\in \Omega^{\check{%
				\alpha}}$, we have $\mv^*:=\mv_1\mv_2\cdots\in \Omega_{%
			\mathbf{a}}$ and $\iota(\mv^*)=\mv$ by \eqref{def-iota0}. Hence $\iota(\Omega_{%
			\mathbf{a}})=\Omega^{\check{\alpha}}$. Since $\iota$ is injective, we
		conclude that $\iota: \Omega _{\mathbf{a}}\to \Omega^{\check{\alpha}}$ is
		bijective.
		
		By \eqref{pi^a} and $\iota:\Omega _{\mathbf{a}}\to\Omega^{\check{\alpha}}$ is
		a bijection, it is seen that $\Omega_{\ma}$ is a coding of $\Sigma_{%
			\check{\alpha},\lambda}$.
	\end{proof}
	\begin{remark}
		Later we will construct a compatible metric $d_{\ma}$ on $\Omega_{\ma}$ such that
		$\pi_{\ma,\lambda}:(\Omega_{\ma},d_{\ma})\to(\Sigma_{\check{\alpha},\lambda},|\cdot|)$ is a bi-Lipschitz homeomorphism, see Proposition \ref{bi-Lip}.
	\end{remark}
	
	\subsubsection{$(\Omega_{\ma},T_{\ma})$ is topologically mixing}
	Recall that a nonnegative square matrix $B$ is called primitive if there exists some $m\in\N$ such that all the entries of $B^m$ are positive. 
	
	The following lemma implies that indeed $(\Omega_{\ma},T_{\ma})$ is topologically mixing.
	
	\begin{lemma}\label{primitive}
		$\hat{A}_{\ma},A_{\ma}$ are primitive and have the same Perron-Frobenius eigenvalue $E_{\ma}$. Moreover, we have $q_{kn}(\alpha)\sim E_{\ma}^n$ for all $\alpha\in\mathcal{P}(\ma)$, 
	\end{lemma}
	\begin{proof}
		It is straightforward to check that all the entries of $\hat{A}_{\ma}^5$ are positive,
		thus $\hat{A}_{\ma}$ is primitive. 
		
		Note that there exist matrices $$
		P=\left(\begin{array}{ccc}
			1&0&1\\
			-1&1&0\\ 
			-1&1&-1
		\end{array}\right)\ \ \text{and}\ \ Q_m=\left(%
		\begin{array}{cc}
			m & 1 \\ 
			1 & 0%
		\end{array}%
		\right)$$
		such that 
		\begin{equation*}
			\hat{A}_{\ma}=P\left(\begin{array}{cc}
				(-1)^k&0\\
				0&B_{\ma} 
			\end{array}\right)P^{-1},\ \  \text{where}\ B_{\mathbf{a}}=Q_{a_k}\cdots Q_{a_{2}}Q_{a_1}.
		\end{equation*}
		It follows that 
		$$\text{det}(\lambda I_3-\hat{A}_{\ma})=(\lambda-(-1)^k)\text{det}(\lambda I_2-B_{\ma}).$$
		Thus $\hat{A}_{\ma}$ and $B_{\ma}$ have the same Perron-Frobenius eigenvalue $E_{\ma}$. On the other hand, if we consider the graph related to the incidence matrix $A_{\ma}$, then it is easy to show that the graph is aperiodic. Consequently $A_{\ma}$ is primitive. Let $N=\#\mathscr{A}_{\ma}$, by direct computation,
		$$\text{det}(\lambda I_N-A_{\ma})=\lambda^{N-3}\text{det}(\lambda I_3-\hat{A}_{\ma}).$$
		Thus the Perron-Frobenius eigenvalue of $A_{\ma}$ is also $E_{\ma}$.
		
		Fix $\alpha\in\mathcal{P}(\ma)$ and by \eqref{q-n}, for any $n\in{\mathbb{N}%
		}$, we have 
		\begin{equation*}
			\left( 
			\begin{array}{c}
				q_{kn}(\alpha) \\ 
				q_{kn-1}(\alpha)
			\end{array}
			\right)=\left(\left( 
			\begin{array}{cc}
				a_k & 1 \\ 
				1 & 0%
			\end{array}
			\right)\cdots\left( 
			\begin{array}{cc}
				a_1 & 1 \\ 
				1 & 0%
			\end{array}
			\right)\right)^n\left( 
			\begin{array}{c}
				1 \\ 
				0%
			\end{array}
			\right)=B_{\ma}^n\left( 
			\begin{array}{c}
				1 \\ 
				0%
			\end{array}
			\right).
		\end{equation*}
		From this it is easy to show that there exist two constants $c_{\mathbf{a}%
		},d_{\mathbf{a}}$ such that 
		\begin{equation*}
			q_{kn}(\alpha)=c_{\mathbf{a}}E_{\mathbf{a}}^n+d_{\mathbf{a}}(-E_{\mathbf{a}%
			})^{-n}\sim E_{\mathbf{a}}^{n}. \label{qE}
		\end{equation*}
		The proof is complete.
	\end{proof}

	\subsection{The geometric potential and weak-Gibbs metric on $\Omega_{\mathbf{a}}$}\label{sec-weak-metric}\
	
	Next we define the geometric potential $\Psi^{\ma}$ which captures the exponential rate of the length of the generating bands and can be viewed as Lyapunov exponent function. We demonstrate that $\Psi^{\ma}$ can admit a weak-Gibbs metric $d_{\ma}$ such that $\pi_{\ma,\lambda}:(\Omega_{\mathbf{a}%
	},d_{\ma})\to(\Sigma_{\check{\alpha},\lambda},|\cdot|)$ is a bi-Lipschitz
	homeomorphism.
	\subsubsection{The geometric potential $\Psi^{\ma}$}
	For any $n\in\N$, we define $\psi_n^{\ma}:\Omega_{\ma}\to\R$ as 
	\begin{equation}\label{def-psi}
		\psi^{\ma}_{n}(\mv):=\log|B^{\check{\alpha}}_{\varpi^{\mv}\mv|_n}|=%
		\begin{cases}
			\log|B^{\check{\alpha}}_{\mathbf{1}(\mathbf{2},1)_1\mv|_n}|, \  & %
			\mbox{if}\ \ \mathbf{t}(h_{\mv})=\mathbf{1}\ \text{or}\ \mathbf{3}, \\ 
			\log|B^{\check{\alpha}}_{\mathbf{3}(\mathbf{1},1)_1\mv|_n}|, \  & %
			\mbox{if}\ \ \mathbf{t}(h_{\mv})=\mathbf{2}.
		\end{cases}
	\end{equation}
	
	Write $\Psi^{\ma}=\{\psi_n^{\ma}:n\geq1\}$. We have the following analog of \cite[Lemma 7]{Qu2016}:
	\begin{lemma}\label{Psi-add} 
		$\Psi^{\ma}\in\mathcal{F}^-(\Omega _{\mathbf{a}},T_{\ma})$ with $D(\Psi^{\ma})=0$. Moreover, for any $\mathbf{v}\in\Omega _{\mathbf{a}} $, we have %where $\mathbf{v}_{i}=w_1^iw_2^i\cdots w_k^i$ for $1\leqi\leq n$, 
		\begin{equation}
			S_nf(\mv)\log\tau _{2}-3n\sum_{i=1}^{k}\log
			a_{i}-\log \tau _{2}\leq \psi^{\ma}_{n}(\mv)\leq 
			S_nf(\mv)\log\tau_{1}+\log 4,  \label{lem31}
		\end{equation}%
		where $\tau_{1}=(\lambda -8)/3,\ \tau
		_{2}=2(\lambda +5)$, $S_nf(\mv):=\sum_{i=0}^{n-1}f(T_{\ma}^{i}\mv)$ and 
		\begin{equation}\label{ff}
			f(\mv):=-k+\sum\limits_{j=1}^{k}(2-a_{j})\chi _{\{\mathbf{2}\}}(%
			\mathbf{t}(v_{j})) \ \ \text{if}\ \mv |_{1}=v_{1}v_{2}\cdots v_{k}\in\mathscr{A}_{\ma}.
		\end{equation}
	\end{lemma}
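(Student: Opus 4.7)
The plan is to derive the two-sided estimate \eqref{lem31} directly from Lemma~\ref{esti-band-length}, and then use \eqref{lem31} together with Proposition~\ref{bco-cor} to verify the three defining properties of $\mathcal{F}^-(\Omega_{\ma},T_{\ma})$: bounded variation, monotonicity/negativity, and almost additivity.

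For \eqref{lem31}, I would apply Lemma~\ref{esti-band-length} to the word $w:=\varpi^{\mv}\mv|_n\in\Omega^{\check{\alpha}}_{nk+1}$, whose length is $nk+2$, with frequency $\check{\alpha}=[1,\overline{a_1,\ldots,a_k}]$. The continued fraction digits satisfy $a_1(\check{\alpha})=1$ and $a_{j+1}(\check{\alpha})=a_{((j-1)\bmod k)+1}$ for $j\ge 1$, so $\prod_{i=1}^{nk+1}a_i(\check{\alpha})^{-3}$ collapses to $\bigl(\prod_{i=1}^k a_i\bigr)^{-3n}$. Writing $\mv|_n=\mv_1\cdots\mv_n$ with $\mv_l=w^{(l)}_1\cdots w^{(l)}_k\in\mathscr{A}_{a_1}\times\cdots\times\mathscr{A}_{a_k}$ and reindexing $i=(l-1)k+s+1$, the sum over type-$\mathbf{2}$ positions with $i\ge 2$ becomes
\begin{equation*}
\sum_{l=1}^{n}\sum_{s=1}^{k}(2-a_s)\chi_{\{\mathbf{t}(w^{(l)}_s)=\mathbf{2}\}}=S_n f(\mv)+nk.
\end{equation*}
The $i=1$ boundary term contributes at most $\log\tau$ (since $a_1(\check{\alpha})=1$ and $w_1$ is either type $\mathbf{2}$ or $\mathbf{1}$), and the prefactor $\tau^{-(nk+1)}$ absorbs this into the residual constants $\log 4$ or $-\log\tau_2$, yielding both inequalities of \eqref{lem31}.

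Bounded variation is immediate: $\varpi^{\mv}$ depends only on $\mathbf{t}(h_\mv)$, and $h_\mv$ is determined by $\mv|_1$, so $\psi^{\ma}_n$ is constant on cylinders of order $n$ and $C_{bv}(\Psi^{\ma})=0$. Monotonicity $\psi^{\ma}_{n+1}(\mv)\le\psi^{\ma}_n(\mv)$ follows from the nesting $B^{\check{\alpha}}_{\varpi^{\mv}\mv|_{n+1}}\subset B^{\check{\alpha}}_{\varpi^{\mv}\mv|_n}$. For the linear upper bound, the universal estimate $|B^{\check{\alpha}}_w|\le 2^{2-n}$ from Lemma~\ref{esti-band-length} applied to a word of length $nk+2$ gives $\psi^{\ma}_n(\mv)\le(1-nk)\log 2$, which already yields $c=(k-1)\log 2$ when $k\ge 2$; for $k=1$, a case analysis using the refined bound in Lemma~\ref{esti-band-length} and the admissibility rule (type-$\mathbf{2}$ letters cannot be consecutive) gives $\psi^{\ma}_1(\mv)\le\log(4/\tau_1)<0$ when $\tau_1>4$, which together with the $n\ge 2$ estimate closes the argument.

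The main obstacle is almost additivity. I would invoke Proposition~\ref{bco-cor} with $\alpha=\beta=\check{\alpha}$, choosing $w=\varpi^{\mv}\mv|_n$ and $u=(T^n_{\ma}\mv)|_m$ so that $wu=\varpi^{\mv}\mv|_{n+m}$, and on the other side $\tilde{w}=\varpi^{T^n_{\ma}\mv}$ so that $\tilde{w}u=\varpi^{T^n_{\ma}\mv}(T^n_{\ma}\mv)|_m$. The suffix $u$ is identical on both sides, and the alphabets along $u$ match because the continued fraction of $\check{\alpha}$ is periodic with period $k$ after the initial~$1$; admissibility of $\tilde{w}u$ follows from the definition of $\varpi$. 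Proposition~\ref{bco-cor} then yields
\begin{equation*}
\Bigl|\psi^{\ma}_{n+m}(\mv)-\psi^{\ma}_n(\mv)-\psi^{\ma}_m(T^n_{\ma}\mv)+\log\bigl|B^{\check{\alpha}}_{\varpi^{T^n_{\ma}\mv}}\bigr|\Bigr|\le\log\eta,
\end{equation*}
and since $|B^{\check{\alpha}}_{\varpi^{T^n_{\ma}\mv}}|$ is one of the two values $|B^{\check{\alpha}}_{\mathbf{1}(\mathbf{2},1)_1}|$ or $|B^{\check{\alpha}}_{\mathbf{3}(\mathbf{1},1)_1}|$, both uniformly bounded away from $0$ and $\infty$, this produces the almost additive constant $C_{aa}(\Psi^{\ma})$ and completes the proof that $\Psi^{\ma}\in\mathcal{F}^-(\Omega_{\ma},T_{\ma})$.
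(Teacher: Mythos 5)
Your proposal is correct and follows essentially the same route as the paper: \eqref{lem31} comes from applying Lemma~\ref{esti-band-length} to $\varpi^{\mv}\mv|_n\in\Omega^{\check{\alpha}}_{nk+1}$ and reindexing the type-$\mathbf{2}$ sum, bounded variation holds with $C_{bv}=0$ since $\psi^{\ma}_n$ is constant on cylinders, almost additivity follows from the bounded covariation of Proposition~\ref{bco-cor} applied to $\varpi^{\mv}\mv|_{n+m}$ versus $\varpi^{T^n_{\ma}\mv}(T^n_{\ma}\mv)|_m$, and negativity comes from the crude bound $|B^{\check{\alpha}}_w|\le 2^{2-(nk+1)}$. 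Your extra attention to the $k=1$ negativity case and to admissibility of $\tilde{w}u$ tidies up points the paper passes over silently, but it is the same argument.
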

	
	\begin{proof}
		%Fix $\mathbf{a}\in\N^k$. 
		By \eqref{def-psi}, it is seen that $\Psi^{\ma}$ has bounded variation property with $D(\Psi^{\ma})=0$.
		
		For any $\mv\in\Omega_{\ma},n\in\N$, we write %$\pi_{\ma,\lambda}(\mv)=x\in\Omega^{\check{\alpha}}$ and 
		$\mathbf{w}=T_{\mathbf{a}}^n(\mv)$. 
		Then for any $m\in\N$, we have
		\begin{equation*}
			\begin{cases}
				\varpi^{\mv}\mv|_{n}=\varpi^{\mv}\mv_{1}\mv_{2}\cdots\mv_{n}=\varpi^{\mv}v_1v_2\cdots v_{nk-1}v_{nk}\in\Omega^{\check{\alpha}}_{nk+1},\\
				\varpi^{\mathbf{w}}\mathbf{w}|_{m}=\varpi^{\mathbf{w}}\mv_{n+1}\cdots\mv_{n+m}=\varpi^{\mathbf{w}}v_{nk+1}\cdots v_{(n+m)k-1}v_{(n+m)k}\in\Omega^{\check{\alpha}}_{mk+1},\\
				\varpi^{\mv}\mv|_{n+m}=\varpi^{\mv}v_1v_2\cdots v_{nk}v_{nk+1}\cdots v_{(n+m)k-1}v_{(n+m)k}\in\Omega^{\check{\alpha}}_{(n+m)k+1}.
			\end{cases}
		\end{equation*}
		This combines with Proposition \ref{bco-cor},  we have (since $|B^{\check{\alpha}}_{\varpi^{\mathbf{w}}}|\in\{|B^{\check{\alpha}}_{\mathbf{1}(\mathbf{2},1)_{1}}|,|B^{\check{\alpha}}_{\mathbf{3}(\mathbf{1},1)_{1}}|\}$)
		$$ \frac{|B^{\check{\alpha}}_{\varpi^{\mv}\mv|_{n+m}}|}
		{|B^{\check{\alpha}}_{\varpi^{\mv}\mv|_n}|}\sim
		\frac{|B_{\varpi^{\mathbf{w}}\mathbf{w}|_m}^{\check{\alpha}}|}{|B_{\varpi^{\mathbf{w}}}^{\check{\alpha}}|}\sim|B_{\varpi^{\mathbf{w}}\mathbf{w}|_m}^{\check{\alpha}}|,$$
		where the constants related to ``$\sim$" depend on $\lambda$. Taking logarithms on both sides of the above equation and applying \eqref{def-psi}, we obtain that $\Psi^{\ma}$ is almost-additive. Also, since $\psi^{\ma}_n(\mv)\leq-nk\log2$ by Lemma \ref{esti-band-length}, and hence we conclude that $\Psi^{\ma}\in \mathcal F^-(\Omega_{\ma},T_{\ma}).$
		
		Fix any $\mv\in\Omega_{\ma}$ and $n\in\N$. By Proposition \ref{Omega-Sigma} and \eqref{def-iota0}, we see that 
		$$v=\iota(\mv)|_{nk+1}=\varpi^{\mv}v_1v_2\cdots v_{nk-1}v_{nk}\in \Omega^{\check{\alpha}}_{nk+1}.$$ 
		Since $\check{\alpha}=[1,\overline{a_1,a_2,\cdots,a_k}]$, then by Lemma \ref{esti-band-length} and \eqref{def-iota0}, we have
		\begin{equation}
			\tau _{2}^{-nk-1}\prod_{i=1}^{k}a_{i}^{-3n}\cdot \Big(\prod_{\substack{1\leq i\leq kn;\\
					{\mathbf{t}}(v_{i})=\mathbf{2}}}\tau_{2}^{2-a_{i}}\Big)\leq |B_{v}^{\check{\alpha}}|\leq 4\tau _{1}^{-nk}\Big(
			\prod_{\substack{1\leq i\leq kn;\\
					{\mathbf{t}}(v_{i})=\mathbf{2}}}\tau _{1}^{2-a_{i}}\Big). \label{31-1}
		\end{equation}%
		It follows from (\ref{31-1}) that 
		\begin{align*}
			& -\log\tau_2+\Big( -nk+\sum\limits_{i=0}^{n-1}\sum\limits_{j=1}^{k}(2-a_{j})\chi _{\{
				\mathbf{2}\}}(\mathbf{t}(v_{ki+j}))\Big) \log \tau
			_{2}-3n\sum_{i=1}^{k}\log a_{i}\leq \psi^{\ma}_{n}(\mv) \\
			& \leq \Big( -nk+\sum\limits_{i=0}^{n-1}\sum\limits_{j=1}^{k}(2-a_{j})\chi _{\{
				\mathbf{2}\}}(\mathbf{t}(v_{ki+j}))\Big) \log \tau _{1}+\log 4,
		\end{align*}%
		thus showing (\ref{lem31}).
	\end{proof}
	\subsubsection{The weak-Gibbs metric and bi-Lipschitz homeomorphism}
	Since $\Psi^{\ma}\in\mathcal{F}^-(\Omega_{\ma},T_{\ma})$, then we can define a weak-Gibbs metric $d_{\ma}$ on $\Omega_{\ma}$ according to \eqref{weak-gibbs-metric} as follows. For any $\mv,\tilde{\mv}\in\Omega_{\ma}$, define 
	\begin{equation*}
		d_{\ma}(\mv,\tilde{\mv}):=\sup_{\mathbf{w}\in[\mv\wedge \tilde{\mv}]_{\ma}}\exp(\psi^{\ma}_{|\mv\wedge\tilde{\mv}|}(\mathbf{w})). \label{wd}
	\end{equation*}
	
	Recall that $\pi_{\ma,\lambda}:
	\Omega_{\ma}\to\Sigma_{\check{\alpha},\lambda}$ is a coding map. Moreover, we have the following:
	\begin{proposition}\label{bi-Lip} 
		$\pi_{\ma,\lambda}:(\Omega_{\mathbf{a}%
		},d_{\ma})\to(\Sigma_{\check{\alpha},\lambda},|\cdot|)$ is a bi-Lipschitz
		homeomorphism.
	\end{proposition}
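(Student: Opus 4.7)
The plan is to invoke the bijectivity already established in Proposition \ref{Omega-Sigma} and then verify the two-sided Lipschitz estimate directly from the nested band structure. First, I would simplify the metric: since $\psi_n^{\ma}(\mv) = \log|B^{\check{\alpha}}_{\varpi^{\mv}\mv|_n}|$ depends only on the prefix $\mv|_n$ (so $C_{bv}(\Psi^{\ma}) = 0$, as observed in the proof of Lemma \ref{Psi-add}), the supremum in the definition of $d_{\ma}$ is trivial and
\begin{equation*}
d_{\ma}(\mv, \tilde{\mv}) = |B^{\check{\alpha}}_{\varpi^{\mv}\mv|_n}|, \quad \text{where } n = |\mv \wedge \tilde{\mv}|.
\end{equation*}
Because $\mv|_n = \tilde{\mv}|_n$ forces $\varpi^{\mv} = \varpi^{\tilde{\mv}}$, both $\pi_{\ma,\lambda}(\mv)$ and $\pi_{\ma,\lambda}(\tilde{\mv})$ lie in the common band $B^{\check{\alpha}}_{\varpi^{\mv}\mv|_n}$ by construction of the coding map \eqref{pi^a}, yielding immediately the upper Lipschitz bound
\begin{equation*}
|\pi_{\ma,\lambda}(\mv) - \pi_{\ma,\lambda}(\tilde{\mv})| \leq |B^{\check{\alpha}}_{\varpi^{\mv}\mv|_n}| = d_{\ma}(\mv, \tilde{\mv}).
\end{equation*}

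For the lower bound, I would exploit the fact that $\mv_{n+1} \neq \tilde{\mv}_{n+1}$ in $\mathcal{A}_{\ma}$ means the $\check{\alpha}$-decodings $\iota(\mv)$ and $\iota(\tilde{\mv})$ first disagree at some position $m$ with $nk+2 \leq m \leq (n+1)k+1$. Consequently $\pi_{\ma,\lambda}(\mv)$ and $\pi_{\ma,\lambda}(\tilde{\mv})$ lie in distinct descendant bands $B^{\check{\alpha}}_u$ and $B^{\check{\alpha}}_{u'}$ inside the common parent $B^{\check{\alpha}}_{\varpi^{\mv}\mv|_n}$, and these descendants are disjoint by Proposition \ref{basic-struc}. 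The core step is proving that the gap between them is at least $C |B^{\check{\alpha}}_{\varpi^{\mv}\mv|_n}|$ for some $C = C(\lambda, \ma) > 0$. I would obtain this from the bounded covariation (Proposition \ref{bco-cor}), applied to $\check{\alpha}$ with $M = 1 + \max_i a_i$: the ratio of any descendant length to its parent length, and similarly the relative positions of the descendants, are determined up to a factor $\eta(\lambda, M)$ by the suffix word $u$ alone, uniformly over the parent's position. Since the alphabet $\mathcal{A}_{\ma}$ is finite, only finitely many suffix configurations of length at most $k$ arise, so taking the infimum of the gap-to-parent ratio over these finitely many reference configurations yields a strictly positive constant.

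The main obstacle is this last step: rigorously transferring the positivity of each "reference" gap ratio (clear at a single position by the interlacing of $(\cdot,\mathbf{1})$-type and $(\cdot,\mathbf{3})$-type bands in Proposition \ref{basic-struc}) into a uniform lower bound valid at every position of the tree. Once this is in place, combining the two Lipschitz bounds with the bijectivity from Proposition \ref{Omega-Sigma} automatically gives the homeomorphism property, completing the proof.
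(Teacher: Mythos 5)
Your upper Lipschitz bound and the reduction of the metric to $d_{\ma}(\mv,\tilde{\mv})=|B^{\check\alpha}_{\varpi^{\mv}\mv|_n}|$ are correct and match the paper. The genuine gap is in your lower bound. You need that two points lying in distinct descendant bands of a common parent $B^{\check\alpha}_{w}$ are separated by at least $C|B^{\check\alpha}_{w}|$, and you propose to extract this from Proposition \ref{bco-cor}. But bounded covariation only controls \emph{ratios of lengths of nested bands} $|B^{\alpha}_{wu}|/|B^{\alpha}_{w}|$; it says nothing about the \emph{positions} of sibling bands inside their parent, hence nothing about the gaps between them. Knowing all descendant lengths up to a uniform constant is compatible with two siblings being arbitrarily close (or even having the gaps shrink along the tree), so "finitely many suffix configurations" does not rescue the argument: the gap-to-parent ratio for a fixed suffix configuration is not a single number but varies with the position in the tree, and no result quoted in the paper bounds that variation. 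A genuine gap (band-separation) estimate requires analysis of the trace polynomials $h_{(n,p)}$ and is exactly the nontrivial content of the bi-Lipschitz property of $\pi^{\check\alpha}_{\lambda}$.

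The paper's proof avoids this entirely: it factors $\pi_{\ma,\lambda}=\pi^{\check\alpha}_{\lambda}\circ\iota$, proves only that $\iota:(\Omega_{\ma},d_{\ma})\to(\Omega^{\check\alpha},d_{\check\alpha})$ is bi-Lipschitz — which \emph{is} a pure length comparison, $|B^{\check\alpha}_{w}|\sim|B^{\check\alpha}_{wv}|$ for the length-$<k$ overhang $v$, obtained from Proposition \ref{bco-cor} and Lemma \ref{esti-band-length} — and then imports the bi-Lipschitz property of $\pi^{\check\alpha}_{\lambda}:(\Omega^{\check\alpha},d_{\check\alpha})\to(\Sigma_{\check\alpha,\lambda},|\cdot|)$ from \cite[Proposition 5.2]{Qu2018}, where the gap estimate is already established. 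To repair your argument you would either have to cite that result (collapsing your proof into the paper's) or independently prove a band-gap lemma, which is a substantial piece of work not supplied by anything in your sketch.
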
	
	\begin{proof}
		Given $\mv,\tilde{\mv}\in\Omega_{\ma}$
		and assume $\mv|_n=\tilde{\mv}|_n$ with $\mv_{n+1}\neq%
		\tilde{\mv}_{n+1}$ for some $n\in\N$. In this case, by Proposition \ref{Omega-Sigma} and \eqref{def-iota0}, we have that $nk+1\leq|\iota(\mv)\wedge\iota(\tilde{\mv})|<(n+1)k+1$, and hence
		$$d_{\ma}(\mv,\tilde{\mathbf{v
		}})=|B^{\check{\alpha}}_{v}|,\quad\text{where}\ v=\iota(\mv)|_{nk+1}\in\Omega^{\check{\alpha}}_{nk+1}.$$
		If we write $\iota(\mv)\wedge\iota(\tilde{\mv})=vw$ for some $w=e_1e_2\cdots e_s\in(\prod_{i=1}^{s}\mathscr{A}_{a_i})\cup\{\emptyset\}$\footnote{If $w=\emptyset$, then $\iota(\mv)\wedge\iota(\tilde{\mv})=v$.} with $0<s<k$. By Proposition \ref{bco-cor}, then 
		\begin{equation*}
			\eta^{-1} \frac{|{B}_{\mathbf{t}w}^{\alpha}|}{|{B}_{\mathbf{t}}^{\alpha}|}%
			\le \frac{|B_{vw}^{\check{\alpha}}|}{|B_{v}^{\check{\alpha}}|%
			}\le \eta \frac{|{B}_{\mathbf{t}w}^{\alpha}|}{|{B}_{\mathbf{t}}^{\alpha}|},\ \ \text{where}\ \mathbf{t}=\mathbf{t}(v).
		\end{equation*}
		Together with Lemma \ref{esti-band-length}, we see that $(M=\max\{a_1,a_2,\cdots,a_k\})$
		\begin{equation}\label{bi-lip}
			|B^{\check{\alpha}}_v|\geq|B_{vw}^{\check{\alpha}}|\geq\eta^{-1}|B_{v}^{\check{\alpha}}|\frac{\min\{|B^{\alpha}_{\mathbf{t}w}|:\mathbf{t}\to w,\ \mathbf{t}\in\{\mathbf{1},\mathbf{2},\mathbf{3}\}\}}{\max\{|B^{\alpha}_{\mathbf{t}}|:\mathbf{t}\in\{\mathbf{1},\mathbf{2},\mathbf{3}\}\}}\geq\frac{|B_{v}^{\check{\alpha}}|}{4\eta(\tau_2^MM^3)^k}.
		\end{equation}
		
		By Proposition \ref{Omega-Sigma} and \eqref{bi-lip}, the map $%
		\iota:\Omega_{\ma}\to\Omega^{\check{\alpha}}$ is a bijection and 
		\begin{equation*}
			d_{\ma}(\mv,\tilde{\mv})=|B^{\check{\alpha}}_{v}|\sim|B^{\check{\alpha}}_{vw}|=d_{\check{\alpha}}(\iota(\mv),\iota(\tilde{\mv})).
		\end{equation*}
		This implies that $\iota:(\Omega_{\ma},d_{\ma})\to(\Omega^{\check{%
				\alpha}},d_{\check{\alpha}})$ is a bi-Lipschitz homeomorphism. By Proposition \ref{bi-lip-alpha}, $\pi_{\lambda}^{\check{\alpha}}:(\Omega^{\check{\alpha}},d_{\check{\alpha}})\to(\Sigma_{\check{%
				\alpha},\lambda},|\cdot|)$ is a bi-Lipschitz homeomorphism. Thus $%
		\pi_{\ma,\lambda}=\pi^{\check{\alpha}}_{\lambda}\circ\iota$ is also a
		bi-Lipschitz homeomorphism.
	\end{proof}
	
	By leveraging the bi-Lipschitz homeomorphism $\pi_{\ma,\lambda}$, it can be established that $\mathcal{N}_{\check{\alpha},\lambda}$ is exact-dimensional.
	\begin{proposition}\label{mu-N}
		Let $\mu_{\ma}$ be the Gibbs measure related to $\mathbf{0}$. Then the following hold:
		
		(1) $\mu_{\mathbf{a}
		}\circ\pi_{\ma,\lambda}^{-1}\asymp\mathcal{N}_{\check{\alpha},\lambda}$, and hence we have
		\begin{equation*}
			\underline{d}_{\mu_{\ma}}(\mv)=\underline{d}_{\mathcal{N}_{\check{\alpha},\lambda}}(\pi_{\ma,\lambda}(\mv))\ \ \ \ \text{and} \ \ \ \ \overline{d}_{\mu_{\ma}}(\mv)=\overline{d}_{\mathcal{N}_{\check{\alpha},\lambda}}(\pi_{\ma,\lambda}(\mv)).
		\end{equation*}	
		
		(2) The DOS $\mathcal{N}_{\check{\alpha},\lambda}$ is exact-dimensional and 
		\begin{equation*}\label{loc-dim}
			\dim_H\mathcal{N}_{\check{\alpha},\lambda}=-\frac{h_{top}(T_{\ma})}{\Psi^{\ma}_*(\mu_{\ma})}=-\frac{\log E_{\ma}}{\Psi^{\ma}_*(\mu_{\ma})}. %\dim_P\mu_{\ma}=\dim_P\mathcal{N}_{\check{\alpha},\lambda}.
		\end{equation*}
	\end{proposition}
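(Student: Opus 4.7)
The plan is to transfer all information through the bi-Lipschitz coding map $\pi_{\ma,\lambda}$ of Proposition \ref{bi-Lip}, so that the statements about $\mathcal{N}_{\check{\alpha},\lambda}$ on $\Sigma_{\check{\alpha},\lambda}$ reduce to statements about the Gibbs/maximal-entropy measure $\mu_{\ma}$ on the subshift of finite type $(\Omega_{\ma},T_{\ma})$.

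First, I would establish the intrinsic properties of $\mu_{\ma}$. By Lemma \ref{primitive}, the SFT $(\Omega_{\ma},T_{\ma})$ is topologically mixing, and by Lemma \ref{Psi-add} the potential $\Psi^{\ma}$ belongs to $\mathcal{F}^-(\Omega_{\ma},T_{\ma})$. Since $\mu_{\ma}$ is the Gibbs measure of the zero potential, Corollary \ref{dos-ext} (applied in the metric $d_{\ma}$) yields that $\mu_{\ma}$ is exact-dimensional with
\begin{equation*}
   \dim_H \mu_{\ma}=-\frac{h_{top}(T_{\ma})}{\Psi^{\ma}_*(\mu_{\ma})}.
\end{equation*}
From standard SFT theory applied to the primitive incidence matrix $A_{\ma}$, $h_{top}(T_{\ma})=\log E_{\ma}$, which is precisely the right-hand side in (2).

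Next, I would prove the strong equivalence $\mu_{\ma}\circ\pi_{\ma,\lambda}^{-1}\asymp \mathcal{N}_{\check{\alpha},\lambda}$ by comparing the two measures on the generating basic sets $\pi_{\ma,\lambda}([\mv|_n]_{\ma})=B^{\check{\alpha}}_{\iota(\mv|_n)}$. On the symbolic side, the Gibbs property (Corollary \ref{pressure-continuous}(2)) gives
\begin{equation*}
   \mu_{\ma}([\mv|_n]_{\ma})\sim \exp(-n\,h_{top}(T_{\ma}))=E_{\ma}^{-n}.
\end{equation*}
On the spectral side, since $\check{\alpha}\in\mathcal{EP}(\ma)$ has digits bounded by $M:=\max\{1,a_1,\dots,a_k\}$, Lemma \ref{dos-measure} applies and gives
\begin{equation*}
   \mathcal{N}_{\check{\alpha},\lambda}\bigl(B^{\check{\alpha}}_{\iota(\mv|_n)}\bigr)\sim \frac{1}{q_{nk+1}(\check{\alpha})}.
\end{equation*}
The same matrix argument used in Lemma \ref{primitive}, applied to $\check{\alpha}=[1,\overline{a_1,\dots,a_k}]$ (the only change being the initial vector), shows $q_{nk+1}(\check{\alpha})\sim E_{\ma}^n$. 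Combining these three estimates, the two measures are comparable with a uniform constant on every basic set $B^{\check{\alpha}}_{\iota(\mv|_n)}$; since $\{B^{\check{\alpha}}_{\iota(\mv|_n)}:n\in\N,\,\mv\in\Omega_{\ma}\}$ forms a nested family of generating partitions, a routine outer-regularity argument upgrades $\sim$ on this semi-algebra to $\asymp$ on all Borel subsets of $\Sigma_{\check{\alpha},\lambda}$.

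Finally, the bi-Lipschitz property in Proposition \ref{bi-Lip} implies that the $r$-balls in the two metric spaces correspond to each other up to a fixed multiplicative distortion of $r$, and the strong equivalence above then yields identical lower and upper local dimensions at corresponding points, establishing (1). Since $\mu_{\ma}$ is exact-dimensional, pushing forward by the bi-Lipschitz, measure-equivalent map $\pi_{\ma,\lambda}$ forces $\mathcal{N}_{\check{\alpha},\lambda}$ to be exact-dimensional as well, with the same Hausdorff dimension, proving (2). The main technical obstacle is the clean quantitative match $\mu_{\ma}([\mv|_n]_{\ma})\sim \mathcal{N}_{\check{\alpha},\lambda}(\pi_{\ma,\lambda}([\mv|_n]_{\ma}))$: both sides must be identified as $\asymp E_{\ma}^{-n}$, which requires the Gibbs estimate, the Raymond-type counting lemma \ref{dos-measure}, and a careful bookkeeping of the offset introduced by $\iota$ together with the Perron–Frobenius asymptotics for $q_{nk+1}(\check{\alpha})$.
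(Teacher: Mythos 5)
Your proposal follows essentially the same route as the paper: compare $\mu_{\ma}$ and $\mathcal{N}_{\check{\alpha},\lambda}$ on the basic sets via the Gibbs estimate $\mu_{\ma}([\mv|_n]_{\ma})\sim E_{\ma}^{-n}$, Lemma \ref{dos-measure} together with $q_{nk+1}(\check{\alpha})\sim E_{\ma}^{n}$, upgrade to $\asymp$ on Borel sets since the basic sets generate, and then transfer local dimensions and exact-dimensionality through the bi-Lipschitz coding map and Corollary \ref{dos-ext}. The argument is correct and matches the paper's proof in all essentials.
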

	\begin{proof}
		(1)	By Lemma \ref{primitive}, $(\Omega_{\ma},T_{\ma})$ is a topologically mixing subshift and the incidence matrix $A_{\ma}$ has Perron-Frobenius eigenvalue $E_{\ma}$, therefore $h_{top}(T_{\ma})=\log E_{\ma}$ (see for example \cite{Wa1982}). By Corollary \ref{pressure-continuous} (2), $\mu_{\ma}$ is the maximal entropy measure such that 
		\begin{equation}\label{htop-E-a}
			\mu_{\ma}([\mv|_n]_{\ma})\sim \exp(-nh_{top}(T_{\ma}))\sim E_{\ma}^{-n},\ \ \ \forall\ \mv\in\Omega_{\ma},\ n\in\N.
		\end{equation}
		By Lemmas \ref{dos-measure} and \ref{primitive}, for any $w\in\Omega^{\check{\alpha}}_{nk+1}$, we see that
		\begin{equation}\label{nu=N}
			\mathcal{N}_{\check{\alpha},\lambda}(X_{w}^{\check{\alpha}})=\mathcal{N}_{\check{\alpha},\lambda}(B^{\check{\alpha}}_{w})\sim q_{nk+1}^{-1}(\check{\alpha})\sim E_{\ma}^{-n}.
		\end{equation}
		Write $\nu_{\mathbf{a}}:=\mu_{\mathbf{a}%
		}\circ\pi_{\ma,\lambda}^{-1}$. Assume $\mv\in\Omega_{\mathbf{a}}$, then $v=\iota(\mv)\in\Omega^{\check{\alpha}}$ by Proposition \ref{Omega-Sigma}. Using  \eqref{cylinder-a} and \eqref{def-iota0}, for any $n\in\N$, then $w:=v|_{nk+1}\in\Omega^{\check{\alpha}}_{nk+1}$ and  $\pi_{\ma,\lambda}([\mv|_n]_{\mathbf{a}})=X^{\check{\alpha}%
		}_{w}$. Combining \eqref{htop-E-a} and \eqref{nu=N}, 
		\begin{equation*}
			\nu_{\mathbf{a}}(X_{\varpi^{\mv}\mv|_n}^{\check{\alpha}})=\mu_{\mathbf{a}%
			}(\pi_{\ma,\lambda}^{-1}(X_{w}^{\check{\alpha}}))=\mu_{%
				\mathbf{a}}([\mv|_n]_{\mathbf{a}})\sim E_{\mathbf{a}}^{-n}\sim\mathcal{N}_{\check{%
					\alpha},\lambda}(X_{w}^{\check{\alpha}}).
		\end{equation*}
		Since $\{X_{\varpi^{\mv}\mv|_n}^{\check{\alpha}}:\mv\in\Omega_{\ma,n},n\in\N\}$ generates the Borel $\sigma$-algebra of $\Sigma_{\check{\alpha},\lambda}$, then $\nu_{\ma}\asymp\mathcal{N}_{\check{\alpha},\lambda}$. Now by \eqref{def-loc-dim} and Proposition \ref{bi-Lip}, we conclude that
		\begin{equation*}
			\underline{d}_{\mu_{\ma}}(\mv)=\underline{d}_{\mathcal{N}_{\check{\alpha},\lambda}}(\pi_{\ma,\lambda}(\mv))\ \ \ \ \text{and} \ \ \ \ \overline{d}_{\mu_{\ma}}(\mv)=\overline{d}_{\mathcal{N}_{\check{\alpha},\lambda}}(\pi_{\ma,\lambda}(\mv)).
		\end{equation*}	
		
		(2)	By Proposition \ref{bi-Lip} and $\mu_{\mathbf{a}}\circ\pi_{\ma,\lambda}^{-1}\asymp\mathcal{N}_{\check{\alpha},\lambda}$, then $\mathcal{N}_{\check{\alpha},\lambda}$ is exact-dimensional. Moreover, by Lemma \ref{mu-phi-loc}, we obtian 
		\begin{equation*}
			\dim_H\mathcal{N}_{\check{\alpha},\lambda}=\dim_H\mu_{\ma}=-\frac{h_{top}(T_{\ma})}{\Psi^{\ma}_*(\mu_{\ma})}=-\frac{\log E_{\ma}}{\Psi^{\ma}_*(\mu_{\ma})}.
		\end{equation*}	
		Now we finish the proof of this proposition.
	\end{proof}
	\subsection{The pressure function $\p_{\ma}$}\
	
	By Lemma \ref{Psi-add}, $\Psi^{\ma}\in\mathcal{F}^{-}(\Omega_{\ma},T_{\ma})$. For any $s\in\R$, define the pressure $\p_{\ma}(s)$ of $s\Psi^{\mathbf{a}}$ as
	\begin{equation}\label{def-pressure}
		\p_{\ma}(s):=\lim\limits_{n\to\infty}\frac{1}{n}\log\sum_{|\mv|=n}\exp(\sup_{x\in[\mv]_{\ma}}s\psi^{\mathbf{a}}_n(x)).
	\end{equation}
	
	\begin{proposition}\label{trans-band} 
		%		Let $\mathbf{a}\in\N^k$. 
		For any $s\in\R$, the potential $s\Psi^{\ma}$ admits a Gibbs measure $\mu^{\ma}_{s}$ with $\mu^{\ma}_{0}=\mu_{\ma}$. The pressure $\p_{\ma}$ is $C^1$ on $\R$. Moreover, $\p'_{\ma}(s)=\Psi^{\ma}_*(\mu^{\ma}_{s})$ for all $s\in\R$ and 
		%$\lim\limits_{n\to\infty}\frac{1}{n}\log q_n(\alpha)=\log E_{\ma}$/k, 
		\begin{equation*}
			\begin{cases}
				\p'_{\ma}(-\infty):=\lim\limits_{s\to-\infty}\p'_{\ma}(s)=k\cdot\lim\limits_{n\to\infty}\frac{1}{n}\log|B^{\check{\alpha}}_{n,\min}|,\\ \p'_{\ma}(\infty):=\lim\limits_{s\to\infty}\p'_{\ma}(s)=k\cdot\lim\limits_{n\to\infty}\frac{1}{n}\log|B^{\check{\alpha}}_{n,\max}|,
			\end{cases}
		\end{equation*}
		where $|B^{\alpha}_{n,\min}|=\min\{|B^{\alpha}_w|:w\in\Omega^{\alpha}_n\}$ is the shortest spectral generating band of order $n$.
	\end{proposition}
	\begin{proof}
		For any $s\in\R$, since $s\Psi^{\ma}\in\mathcal{F}^-(\Omega_{\ma},T_{\ma})$, then by Theorem \ref{variation priciple}, $s\Psi^{\ma}$ admits a Gibbs measure $\mu^{\ma}_{s}$. If $s=0$, for any $\mv\in\Omega_{\ma}$ and $n\in\N$, by Corolalry \ref{pressure-continuous} (2), we have
		\begin{equation*}
			\mu^{\ma}_{0}([\mv|_n]_{\ma})\sim\exp(-n\p_{\ma}(0))\sim\exp(-nh_{top}(T_{\ma})).
		\end{equation*}
		Combining with \eqref{htop-E-a}, we have $\mu^{\ma}_{0}\asymp\mu_{\ma}$. Since $\mu^{\ma}_{0},\mu_{\ma}$ are ergodic measures, then $\mu^{\ma}_{0}=\mu_{\ma}$.
		
		By Proposition \ref{relatived-pressure-diff} (2), $\p_{\ma}$ is $C^1$ and convex on $\R$ and $\p'_{\ma}(s)=\Psi^{\ma}_*(\mu^{\ma}_{s})$ for all $s\in\R$. Moreover, the limits $\lim\limits_{s\to-\infty}\p'_{\ma}(s)$ and $ \lim\limits_{s\to\infty}\p'_{\ma}(s)$ both exist by Corollary \ref{psi-pressure}.
		
		By Proposition \ref{Omega-Sigma}, we have that $\iota(\Omega_{\ma,m})=\Omega^{\check{\alpha}}_{mk+1}$. Then by \eqref{def-psi}, for any $m\in\N$, 
		\begin{equation}\label{band-1}
			\begin{cases} 
				\exp(\inf\limits_{\mv\in\Omega_{\ma}}\psi^{\ma}_m(\mv))=|B^{\check{\alpha}}_{mk+1,\min}|=\inf\limits_{w\in\Omega^{\check{\alpha}}_{mk+1}}|B^{\check{\alpha}}_w|,\\ \exp(\sup\limits_{\mv\in\Omega_{\ma}}\psi^{\ma}_m(\mv))=|B^{\check{\alpha}}_{mk+1,\max}|=\sup\limits\limits_{w\in\Omega^{\check{\alpha}}_{mk+1}}|B^{\check{\alpha}}_w|.
			\end{cases}
		\end{equation}
		For any large $n$, assume $m=m(n)$ is such that $(m-1)k+1\leq n< mk+1$, then 
		\begin{equation*}
			\left\{ \begin{aligned}
				&|B^{\check{\alpha}}_{mk+1,\min}|\leq|B^{\check{\alpha}}_{n,\min}|\leq|B^{\check{\alpha}}_{(m-1)k+1,\min}|,\\
				&|B^{\check{\alpha}}_{mk+1,\max}|\leq|B^{\check{\alpha}}_{n,\max}|\leq|B^{\check{\alpha}}_{(m-1)k+1,\max}|.
			\end{aligned}\right.
		\end{equation*}
		Combining \eqref{band-1} and Corollary \ref{psi-pressure}, we obtain the results.
	\end{proof}
	Now we can obtain the following asymptotic properties:
	\begin{corollary}\label{asy-lemma} 
		For any $k\in\N,\mathbf{a}\in \N^k$ and $s\in\R$, we have 
		\begin{equation}\label{asy}
			\lim\limits_{\lambda \rightarrow \infty }\frac{\p'_{\ma}(s)}{\log
				\lambda }=\int f\mathrm{d}\mu^{\ma}_{s},\ \ \lim\limits_{\lambda \rightarrow \infty }\frac{\p'_{\ma}(-\infty)}{\log \lambda }=
			\underline{\mathbf{F}_{\ma}},\ \  \ \lim\limits_{\lambda \rightarrow \infty }\frac{\p'_{\ma}(\infty)}{\log \lambda }=\overline{\mathbf{F}_{\ma}}.
		\end{equation}
		where $f$ is defined in \eqref{ff} and  \begin{equation}  \label{def-F}
			\underline{\mathbf{F}_{\ma}}=\lim_{n\to\infty}\frac{1}{n}\inf_{\mv\in\Omega_{%
					\mathbf{a}}}S_nf(\mv);\ \ \ \ \ \ \ \overline{\mathbf{F}_{\ma}}=\lim_{n\to\infty}%
			\frac{1}{n}\sup_{\mv\in\Omega_{\ma}}S_nf(\mv).
		\end{equation}
	\end{corollary}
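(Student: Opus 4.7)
The strategy is to combine the two-sided estimate of Lemma \ref{Psi-add}, which compares $\psi^{\ma}_n(\mv)$ with $S_nf(\mv)$ times $\log\tau_i$ up to error terms independent of $\mv$, with the variational identifications of $\p'(s)$, $\p'(-\infty)$ and $\p'(\infty)$ already established in Proposition \ref{trans-band} and Corollary \ref{psi-pressure}. Since $\tau_1=(\lambda-8)/3$ and $\tau_2=2(\lambda+5)$, we have $\log\tau_i/\log\lambda\to 1$ as $\lambda\to\infty$ for $i=1,2$, while the additive constants $\log 4$, $\log\tau_2$, and $\sum_i\log a_i$ appearing in Lemma \ref{Psi-add} become negligible after division by $n\log\lambda$ or $\log\lambda$. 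The function $f$ of \eqref{ff} is bounded independently of $\lambda$, so all residual errors will be controlled uniformly in $s$ and in the choice of Gibbs state.

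For the first identity of \eqref{asy}, I recall from Proposition \ref{trans-band} that $\p'(s) = \Psi^{\ma}_*(\mu^{\ma}_s) = \lim_{n\to\infty}\frac{1}{n}\int\psi^{\ma}_n\,d\mu^{\ma}_s$. The plan is to integrate the inequalities of Lemma \ref{Psi-add} against $\mu^{\ma}_s$ and invoke the $T_{\ma}$-invariance of $\mu^{\ma}_s$ to rewrite $\frac{1}{n}\int S_nf\,d\mu^{\ma}_s = \int f\,d\mu^{\ma}_s$. Letting $n\to\infty$ then sandwiches $\p'(s)$ between $\log\tau_2\int f\,d\mu^{\ma}_s - 3\sum_i\log a_i$ and $\log\tau_1\int f\,d\mu^{\ma}_s$. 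Dividing by $\log\lambda$ and using the uniform boundedness of $\int f\,d\mu^{\ma}_s$ together with $\log\tau_i/\log\lambda\to 1$, a squeeze argument delivers $\p'(s)/\log\lambda \to \int f\,d\mu^{\ma}_s$.

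For the second and third identities, I would instead take $\inf_{\mv}$ (respectively $\sup_{\mv}$) in both sides of the inequality of Lemma \ref{Psi-add}; because $\log\tau_i>0$ for $\lambda>20$, these operations commute with multiplication by $\log\tau_i$. Dividing by $n$ and letting $n\to\infty$, the quantity $\frac{1}{n}\inf_{\mv}S_nf(\mv)$ converges to $\underline{\mathbf{F}}(\ma)$ (resp.\ $\overline{\mathbf{F}}(\ma)$) by definition \eqref{def-F}, while $\frac{1}{n}\inf_{\mv}\psi^{\ma}_n(\mv)$ (resp.\ the sup) converges to $\p'(-\infty)$ (resp.\ $\p'(\infty)$) by Corollary \ref{psi-pressure}. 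This sandwiches $\p'(\pm\infty)$ between $\log\tau_1$ and $\log\tau_2$ multiples of $\underline{\mathbf{F}}(\ma)$ or $\overline{\mathbf{F}}(\ma)$, and a final division by $\log\lambda$ followed by the same squeeze argument yields \eqref{asy}. The only subtlety, which poses no real obstacle, is that $\mu^{\ma}_s$ depends on $\lambda$, so the first identity must be read as $\p'(s)/\log\lambda - \int f\,d\mu^{\ma}_s \to 0$; since $\|f\|_\infty$ depends only on $\ma$, the resulting error bound $\|f\|_\infty\max_i|\log\tau_i/\log\lambda - 1| + 3\sum_i\log a_i/\log\lambda$ is uniform in $s$ and vanishes as $\lambda\to\infty$. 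I do not anticipate any deeper obstacle; the work is essentially careful bookkeeping of these error terms.
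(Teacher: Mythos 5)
Your proposal is correct and follows essentially the same route as the paper: integrate the two-sided bound of Lemma \ref{Psi-add} against $\mu^{\ma}_s$ (using invariance to replace $\frac{1}{n}\int S_nf\,d\mu^{\ma}_s$ by $\int f\,d\mu^{\ma}_s$) for the first identity, and take $\inf/\sup$ over $\mv$ together with Corollary \ref{psi-pressure} for the other two, before dividing by $\log\lambda$. The only point you gloss over, which the paper handles via a one-line sub/superadditivity (Fekete) argument for $S_nf$, is that the limits defining $\underline{\mathbf{F}}(\ma)$ and $\overline{\mathbf{F}}(\ma)$ in \eqref{def-F} actually exist.
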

	
	\begin{proof}
		Note that $\mu^{\ma}_s$ is an invariant measure for any $s\in\R$, then by Lemma \ref{Psi-add}, we have 
		\begin{align*}
			&\log\tau_2\lim\limits_{n\to\infty}\frac{1}{n}\int S_nf\mathrm{d}\mu^{\ma}_{s}-3\sum_{i=1}^{k}\log a_i \leq\lim\limits_{n\to\infty}\frac{1}{n}\int\psi^{\ma}_n\mathrm{d}\mu^{\ma}_{s}\leq\log%
			\tau_1\lim\limits_{n\to\infty}\frac{1}{n}\int S_nf\mathrm{d}\mu^{\ma}_{s},
		\end{align*}
		where $\tau_{1}=(\lambda -8)/3$ and $\tau
		_{2}=2(\lambda +5)$. Combining this with Proposition \ref{trans-band}, we obtain 
		\begin{equation*}
			\lim\limits_{\lambda \rightarrow \infty }\frac{\p'_{\ma}(s)}{\log
				\lambda }=\lim\limits_{\lambda\to\infty}\frac{\Psi^{\ma}_*(\mu^{\ma}_{s})}{\log\lambda}=\int
			f\mathrm{d}\mu^{\ma}_{s}.
		\end{equation*}	
		
		Note that $S_{n+m}f(\mv)=S_{n}f(\mv)+S_{m}f(T^n\mv)$, then
		\begin{align*}
			\inf_{\mv\in\Omega_{\ma}}S_{n+m}f(\mv)\geq\inf_{\mv\in\Omega_{\ma}}S_nf(\mv)+\inf_{\mv \in\Omega_{\ma}}S_{m}f(\mv), \\
			\sup_{\mv\in\Omega_{\ma}}S_{n+m}f(\mv)\leq\sup_{\mv\in\Omega_{\ma}}S_nf(\mv)+\sup_{\mv\in\Omega_{\mathbf{%
						a}}}S_{m}f(\mv),
		\end{align*}
		which implies that the limits in \eqref{def-F} exist. Since $\inf\limits_{\mv \in\Omega_{\ma}}\psi^{\ma}_n(\mv)=|B^{\check{\alpha}}_{nk+1,\min}|$, using Lemma \ref{Psi-add} again, we see that 
		\begin{align*}
			\log\tau_2\lim_{n\to\infty}\frac{1}{n}\inf_{\mv\in\Omega_{%
					\mathbf{a}}}S_nf(\mv)-3\sum_{i=1}^{k}\log a_i
			\leq\lim_{n\to\infty}\frac{1}{n}|B^{\check{\alpha}}_{nk+1,\min}|\leq\log\tau_1\lim_{n\to\infty}\frac{1%
			}{n}\inf_{\mv\in\Omega_{\ma}}S_nf(\mv).
		\end{align*}
		Together with Proposition \ref{trans-band} and \eqref{def-F}, we conclude that 
		\begin{equation*}
			\lim\limits_{\lambda\to\infty}\frac{\p'_{\ma}(-\infty)}{\log\lambda}=\underline{\mathbf{F}_{\ma}}.
		\end{equation*}
		
		A	similar proof shows that the last equality in \eqref{asy} holds. Now the result follows.
	\end{proof}
	\begin{remark}\label{asy-constant}
		For any $\ma=a_1a_2\in\N^2$, one can obtain that
		\begin{equation*}
			\underline{\mathbf{F}_{\ma}}=\begin{cases}
				-4/3,&\text{if}\ \max\{a_1,a_2\}=1,\\
				-\max\{a_1,a_2\},&\text{if}\ \max\{a_1,a_2\}>1,
			\end{cases}\ \ 
			\overline{\mathbf{F}_{\ma}}=\begin{cases}
				-1,&\text{if}\ \min\{a_1,a_2\}=1,\\
				-2, &\text{if}\ \min\{a_1,a_2\}>1.
			\end{cases}
		\end{equation*}
	\end{remark}
	\section{Proofs of Theorems \ref{main-result}}\label{sec-main}
	
	In this section, let $k\in\N,\ma=a_1a_2\cdots a_k\in\N^k$ and write $
	\check{\alpha}=[1,\overline{a_1,a_2,\cdots,a_k}].$
	Recall that $(\Omega_{\ma},T_{\ma})$ is a topologically mixing subshift of finite type and $h_{top}(T_{\ma})=\log E_{\ma}$ is the topological entropy of $(\Omega_{\ma},T_{\ma})$. We define the potential $\Psi^{\ma}=\{\psi^{\ma}_n:n\geq1\}$ according to \eqref{def-psi}. Let $\mu^{\ma}_s$ be the Gibbs measure related to the potential $s\Psi^{\ma}$ on $\Omega_{\ma}$. Let $\p_{\ma}(s)$ be the pressure function of the potential $s\Psi^{\ma}$ (see \eqref{def-pressure}).
	\subsection{Proof of Theorem \ref{main-result} (vii)}\
	
	Fix $\lambda>20$. Let $\alpha,\beta\in\mathcal{EP}(\mathbf{a})$ with continued fraction expansions 
	$$\alpha=[b_1,\cdots,b_m,\overline{a_1,\cdots,a_k}],\quad
	\beta=[c_1,\cdots,c_n,\overline{a_1,\cdots,a_k}].$$ Recall that $X^{\alpha}_{w}$ is the basic set of $\Sigma_{\alpha,\lambda}$ (see \eqref{def-basic-set}), then 
	\begin{equation}\label{T-1}
		\Sigma_{\alpha,\lambda}=\bigsqcup_{u\in\Omega^{\alpha}_{m+5}}X_{u}^{\alpha} \ \ \text{and}\ \ \
		\Sigma_{\beta,\lambda}=\bigsqcup_{v\in\Omega^{\beta}_{n+5}}X_{v}^{\beta}.
	\end{equation}
	Moreover, by \eqref{admissible-T-A} and \eqref{admissible-A-A}, we have
	\begin{equation}\label{T-2}
		\{\mathbf{t}(u):u\in\Omega^{\alpha}_{m+5}\}= \{\mathbf{t}(v):v\in\Omega^{\beta}_{n+5}\}=\{\mathbf{1},\mathbf{2},\mathbf{3}\}.
	\end{equation}
	We shall prove the inequalities of Theorem \ref{main-result} (vii) one by one.
	
	By equations \eqref{T-1}, \eqref{T-2} and Proposition \ref{geo-lem-tail}, we conclude that
	\begin{align*}
		\inf\{\underline{d}_{\mathcal{N}_{\alpha,\lambda}}(x):x\in\Sigma_{\alpha,\lambda}\}&=\inf_{u\in\Omega^{\alpha}_{m+5}}\inf\{\underline{d}_{\mathcal{N}_{\alpha,\lambda}}(x):x\in X^{\alpha}_u\}\\
		&=\inf_{v\in\Omega^{\beta}_{n+5}}\inf\{\underline{d}_{\mathcal{N}_{\beta,\lambda}}(x):x\in X^{\beta}_v\}=\inf\{\underline{d}_{\mathcal{N}_{\beta,\lambda}}(x):x\in\Sigma_{\beta,\lambda}\}.
	\end{align*}
	Now by \eqref{def-gamma}, we see that $\gamma(\alpha,\lambda)=\gamma(\beta,\lambda)$.
	
	Similarly, by equations \eqref{T-1}, \eqref{T-2} and Proposition \ref{geo-lem-tail}, we have
	\begin{align*}
		\dim_{H}\mathcal{N}_{\alpha,\lambda}&=\sup\{s: \underline{d}_{\mathcal{N}_{\alpha,\lambda}}(x)\ge s \text{ for  } \mathcal{N}_{\alpha,\lambda} \text{ a.e. }x\in\Sigma_{\alpha,\lambda}\}\\
		&=\inf_{u\in\Omega^{\alpha}_{m+5}}\sup\{s: \underline{d}_{\mathcal{N}_{\alpha,\lambda}}(x)\ge s \text{ for  } \mathcal{N}_{\alpha,\lambda} \text{ a.e. }x\in X^{\alpha}_{u}\}\\
		&=\inf_{v\in\Omega^{\beta}_{n+5}}\sup\{s: \underline{d}_{\mathcal{N}_{\beta,\lambda}}(x)\ge s \text{ for  } \mathcal{N}_{\beta,\lambda} \text{ a.e. }x\in X^{\beta}_{v}\}\\
		&=\sup\{s: \underline{d}_{\mathcal{N}_{\beta,\lambda}}(x)\ge s \text{ for  } \mathcal{N}_{\beta,\lambda} \text{ a.e. }x\in\Sigma_{\beta,\lambda}\}=\dim_{H}\mathcal{N}_{\beta,\lambda}.
	\end{align*}
	It follows from \eqref{dim-meas} that $d(\alpha,\lambda)=d(\beta,\lambda)$.
	
	By equations \eqref{T-1}, \eqref{T-2} and Proposition \ref{geo-lem-tail} (1), we get 
	\begin{align*}
		\dim_{H}\Sigma_{\alpha,\lambda}=\sup_{u\in\Omega^{\alpha}_{m+5}}\dim_HX^{\alpha}_{u}=\sup_{v\in\Omega^{\beta}_{n+5}}\dim_HX^{\beta}_{v}=\dim_{H}\Sigma_{\beta,\lambda}.
	\end{align*}
	This implies that $D(\alpha,\lambda)=D(\beta,\lambda)$.
	
	By the proof of Lemma \ref{primitive}, it is straightforward to check that
	\begin{equation}\label{5-0}
		\lim\limits_{n\to\infty}\frac{1}{n}\log q_n(\alpha)=\lim\limits_{n\to\infty}\frac{1}{n}\log q_n(\beta)=\frac{\log E_{\ma}}{k}.
	\end{equation}
	For any $l\in\N$, by equation \eqref{T-2}, Proposition \ref{bco-cor} and Lemma \ref{esti-band-length}, then
	\begin{align*}
		|B^{\alpha}_{m+5+l,\max}|&=\sup\{|B^{\alpha}_{uw}|:u\in\Omega^{\alpha}_{m+5},uw\in\Omega^{\alpha}_{m+5+l}\}\\
		&\sim\sup\{|B^{\beta}_{vw}|:v\in\Omega^{\beta}_{n+5},vw\in\Omega^{\beta}_{n+5+l}\}=|B^{\beta}_{n+5+l,\max}|\\
		&\sim\sup\{|B^{\beta}_{\tilde{u}w}|:\tilde{u}\in\Omega^{\check{\alpha}}_{1+5},\tilde{u}w\in\Omega^{\check{\alpha}}_{1+5+l}\}=|B^{\check{\alpha}}_{1+5+l,\max}|,
	\end{align*}
	where the constants related to ``$\sim$" are independent of $l$. Thus, we have the following:
	\begin{equation*}
		|B^{\alpha}_{m+5+l,\max}|\sim |B^{\beta}_{n+5+l,\max}|\sim |B^{\check{\alpha}}_{1+5+l,\max}|.
	\end{equation*}
	By Proposition \ref{trans-band}, the limit $\lim\limits_{l\to\infty}\frac{1}{l}\log|B^{\check{\alpha}}_{1+5+l,\max}|$ exists and equals to $\p'_{\ma}(\infty)/k$. Therefore, combining \eqref{5-0} and Proposition \ref{trans}, we conclude that 
	\begin{equation}\label{trans-1}
		\mathcal{T}^{\pm}(\alpha,\lambda)=\mathcal{T}^{\pm}(\beta,\lambda)=\mathcal{T}^{\pm}(\check{\alpha},\lambda)=-\frac{\log E_{\ma}}{\p'_{\ma}(\infty)}.
	\end{equation}
	\subsection{Proof of Theorem \ref{main-result} (i)-(iv)}\
	
	Fix $\lambda>20$. Note that $\mu_{\ma}=\mu^{\ma}_0$ is the maximal entropy measure such that $h_{\mu_{\ma}}(T_{\ma})=h_{top}(T_{\ma})=\p_{\ma}(0)$. By Proposition \ref{dos-mu-ana} (1), we have
	\begin{equation}\label{pressure-measure}
		\inf_{\mv\in\Omega_{\ma}}\underline{d}_{\mu_{\ma}}(\mv)=-\frac{\p_{\ma}(0)}{\p'_{\ma}(-\infty)}\ \ \text{and}\ \ \ \sup_{\mv\in\Omega_{\ma}}\overline{d}_{\mu_{\ma}}(\mv)=-\frac{\p_{\ma}(0)}{\p_{\ma}'(\infty)}.
	\end{equation}
	\begin{enumerate}[label=(\roman*)]
		\item By Proposition \ref{mu-N} (2), the DOS $\mathcal{N}_{\check{\alpha},\lambda}$ is exact-dimensional. By Proposition \ref{geo-lem-tail} (2) and Theorem \ref{main-result} (vii), we see that $\mathcal{N}_{\alpha,\lambda}$ is also exact-dimensional and 
		\begin{equation*}
			d(\alpha,\lambda)=d(\check{\alpha},\lambda)=-\frac{h_{top}(T_{\ma})}{\Psi^{\ma}_*(\mu_{\ma})}
=-\frac{\p_{\ma}(0)}{\Psi^{\ma}_*(\mu^{\ma}_{0})}.
		\end{equation*}
		Proposition \ref{trans-band} tells us that $\p'_{\ma}(0)=\Psi^{\ma}_*(\mu^{\ma}_0)$. Then the result follows. %$\p(0)=h_{top}(T_{\ma})=\log E_{\ma}$ $d(\check{\alpha},\lambda)=d(\alpha,\lambda)$.
		
		\item By Proposition \ref{bi-Lip}, $\pi_{\ma,\lambda}:(\Omega_{\mathbf{a}%
		},d_{\ma})\to(\Sigma_{\check{\alpha},\lambda},|\cdot|)$ is a bi-Lipschitz
		homeomorphism. According to \eqref{pressure-measure} and Proposition \ref{mu-N} (1), we conclude that
		\begin{equation*}
			-\frac{\p_{\ma}(0)}{\p'_{\ma}(-\infty)}=\inf_{\mv\in\Omega_{\ma}}\underline{d}_{\mu_{\ma}}(\mv)=\inf_{\mv\in\Omega_{\ma}}\underline{d}_{\mathcal{N}_{\check{\alpha},\lambda}}(\pi_{\ma,\lambda}(\mv))=\inf\{\underline{d}_{\mathcal{N}_{\check{\alpha},\lambda}}(x):x\in\Sigma_{\check{\alpha},\lambda}\}=\gamma(\check{\alpha},\lambda).
		\end{equation*}
		We finish the proof of part (ii) by using the fact that  $\gamma(\check{\alpha},\lambda)=\gamma(\alpha,\lambda)$.
		\item  Since the map $\pi_{\ma,\lambda}:(\Omega_{\mathbf{a}%
		},d_{\ma})\to(\Sigma_{\check{\alpha},\lambda},|\cdot|)$ is a bi-Lipschitz
		homeomorphism, then by Corollary \ref{pressure-continuous} (3), $\dim_H\Omega_{\ma}=\dim_H\Sigma_{\check{\alpha},\lambda}=D(\check{\alpha},\lambda),
		$
		where $D(\check{\alpha},\lambda)$ is the zero of $\p_{\ma}(s)=0$. By Proposition \ref{trans-band}, there exists $s_{\ma}\in(0,D(\check{\alpha},%
		\lambda))$ such that  
		\begin{equation*} \label{d-D-2}
			D(\check{\alpha},\lambda)=-\frac{\p_{\ma}(0)-\p_{\ma}(D(\check{\alpha},\lambda))} {\p_{\ma}^{\prime }(s_{\ma})}=-\frac{\p_{\ma}(0)}{\p^{\prime}_{\ma}(s_{\ma})},
		\end{equation*}
		thus showing Theorem \ref{main-result} (iii) by noting that $D(\check{\alpha},\lambda)=D(\alpha,\lambda)$.
		
		\item The result follows by \eqref{trans-1}, \eqref{pressure-measure} and the fact $\log E_{\ma}=\p_{\ma}(0)$.
		
	\end{enumerate}
	\subsection{Proof of Theorem \ref{main-result} (v) and (vi)}\
	
	By Theorem \ref{main-result} (i)-(iv), we see that
	\begin{equation*}
		\gamma(\alpha,\lambda)=-\frac{\p_{\ma}(0)}{\p'_{\ma}(-\infty)},\ d(\alpha,\lambda)=-\frac{\p_{\ma}(0)}{\p'_{\ma}(0)},\ D(\alpha,\lambda)=-\frac{\p_{\ma}(0)}{\p'_{\ma}(s_{\ma})},\	\mathcal{T}^{\pm}(\alpha,\lambda)=-\frac{\p_{\ma}(0)}{\p'_{\ma}(\infty)}.
	\end{equation*}
	
	To establish strict inequalities for these spectral characteristics, we require the following:
	\begin{lemma}\label{band-band}
		Let $\lambda\geq240$, then for any $\alpha=[a_1,a_2,\cdots]\in\mathbb{I}$, we have 
		\begin{equation*}
			\sup_{n\geq1}\sup\{\log(|B^{\alpha}_{w}|/B^{\alpha}_{v}|):w,v\in\Omega^{\alpha}_{n}\}=\infty.
		\end{equation*}
	\end{lemma}
	The proof of lemma \ref{band-band} is quite technical, and will be given in Sect. \ref{imp-lemma}.
	\begin{enumerate}[label=(\roman*),start=5]
		\item Since $\ma=a_1a_2\cdots a_k$, then by Proposition \ref{Omega-Sigma}, for any $n\in\N$, we have $$\Omega_{\ma,n}=\Omega^{\check{\alpha}}_{nk+1},\quad\check{\alpha}=[1,\overline{a_1,a_2,\cdots,a_k}].$$ Now by Lemma \ref{band-band} and \eqref{def-psi}, we obtain that 
		\begin{equation*}
			\sup_{n\geq1}\sup\{|\psi^{\ma}_{n}(x)-\psi^{\ma}_{n}(y)|:x,y\in\Omega_{\ma}\}=\infty.
		\end{equation*}
		This combines with Proposition \ref{relatived-pressure-diff} (3), we conclude that $\p_{\ma}$ is strictly convex on $\R$ and $\p'_{\ma}$ is strictly increasing on $\R$. Note that $\p_{\ma}(0)>0$, we know that
		\begin{equation*}
			\p'_{\ma}(-\infty)<\p'_{\ma}(0)<\p'_{\ma}(s_{\ma})<\p'_{\ma}(\infty)<0,
		\end{equation*}
		which implies Theorem \ref{main-result} (v).
		
		\item 	By Corollary \ref{asy-lemma}, we have
		\begin{align*}
			&\lim_{\lambda\to\infty}\gamma(\alpha,\lambda)\cdot\log\lambda=-\frac{\p_{\ma}(0)}{\underline{\mathbf{F}_{\ma}}},\ \ \ \ \ \ \lim_{\lambda\to\infty}d(\alpha,\lambda)\cdot\log\lambda=-\frac{\p_{\ma}(0)}{\int f\mathrm{d}\mu^{\ma}_0},\\
			&\lim_{\lambda\to\infty}D(\alpha,\lambda)\cdot\log\lambda=-\frac{\p_{\ma}(0)}{\int f\mathrm{d}\mu^{\ma}_{s_{\ma}}}, \ \   \lim_{\lambda\to\infty}\mathcal{T}^{\pm}(\alpha,\lambda)\cdot\log\lambda=-\frac{\p_{\ma}(0)}{\overline{\mathbf{F}_{\ma}}}.
		\end{align*}
		This completes the proof of Theorem \ref{main-result} (vi).
	\end{enumerate}
	
	\section{Proof of Lemma \ref{band-band}}\label{imp-lemma}
	
	In this section, we prove Lemma \ref{band-band}. Roughly speaking, it suffices to construct two distinct words $w,v\in\Omega^{\alpha}_{n}$ such that the ratio of their corresponding spectral band lengths $|B^{\alpha}_{w}|/|B^{\alpha}_{v}|$ can be made arbitrarily large (if $n$ is large).
	
	The word $w\in\Omega^{\alpha}_{n}$ corresponds to relatively long spectral bands $B^{\alpha}_w$, and its construction essentially follows the approach given in \cite[Proposition 3.4]{DGLQ2015}. The only difference is that we employ more refined estimates, with the requirement $\lambda\geq240$.
	
	The	word $v=v_0v_1\cdots v_{n}\in\Omega^{\alpha}_{n}$ is associated with the relatively short spectral band $B^{\alpha}_v$, and its construction is rather straightforward. As for word $v_i=(\mathbf{t}_i,l_i)_{a_i}$, we set $\mathbf{t}_i=2$ whenever possible if $a_i\ge 2$; while if $a_i=1$, we choose $\mathbf{t}_i\ne 2$ as far as possible.
	
	The following lemma constitutes an improvement upon Lemma \ref{esti-band-length} for $\lambda\geq240$:
	\begin{lemma}\label{estimate-band}
		Assume that $\lambda\geq240$ and $\alpha=[a_1,a_2,\cdots]\in\mathbb{I}$. For any $w=w_0w_1\cdots w_n\in\Omega^{\alpha}_n$ with $w_i=(\mathbf{t}_i,l_i)_{a_i}$, then we have
		\begin{align*}
			\frac{			4}{t_2^n}\prod_{\mathbf{t}_i=\mathbf{2}}\frac{(2t_2)^{2-a_i}}{2}\prod_{\mathbf{t}_i\neq\mathbf{2}}\frac{\sin^2\frac{l_i\pi}{p(\mathbf{t}_{i-1}\mathbf{t}_i)+1}}{(p(\mathbf{t}_{i-1}\mathbf{t}_i)+1)}\leq|B^{\alpha}_w|\leq\frac{4}{t_1^n}\prod_{\mathbf{t}_i=\mathbf{2}}\frac{(2t_1)^{2-a_i}}{2}\prod_{\mathbf{t}_i\neq\mathbf{2}}\frac{\sin^2\frac{l_i\pi}{p(\mathbf{t}_{i-1}\mathbf{t}_i)+1}}{(p(\mathbf{t}_{i-1}\mathbf{t}_i)+1)},
		\end{align*}
		where $t_1=12(\lambda-8)/25$, $t_2=13(\lambda+8)/25$ and ($\mathbf{t}_0=\mathbf{t}(w_0)$)
		\begin{equation}
			p(\mathbf{t}_{i-1}\mathbf{t}_i)=\begin{cases}
				1,\quad&\text{if}\ \mathbf{t}_{i-1}\mathbf{t}_i=\mathbf{1}\mathbf{2},\\
				a_{i}+1,\quad&\text{if}\ \mathbf{t}_{i-1}\mathbf{t}_i=\mathbf{2}\mathbf{1},\\
				a_{i},\quad&\text{if}\ \mathbf{t}_{i-1}\mathbf{t}_i=\mathbf{2}\mathbf{3},\mathbf{3}\mathbf{1},\\
				a_{i}-1,\quad&\text{if}\ \mathbf{t}_{i-1}\mathbf{t}_i=\mathbf{3}\mathbf{3}.
			\end{cases} \label{def-p}
		\end{equation}
	\end{lemma}
	\begin{remark}
		The proof of this lemma follows exactly that of \cite[Propositions 3.2 and 3.3]{FLW2011}, except that we take the coupling constant $\lambda\geq240$, see also the proof of \cite[Lemma 3.7]{LQW2014}. Roughly speaking, when the $\lambda$ is sufficiently large, the polynomial generating the spectral band behaves very much like a straight line near the origin, then the coefficients of $\lambda$ in the parameters $t_1,t_2$ are both close to $1/2$. For the sake of completeness, we provide a proof sketch in Appendix \ref{exp}.
	\end{remark}
	
	\noindent{\bf Proof of Lemma \ref{band-band}.} 
	Fix $\lambda\geq240$. For any $\alpha=[a_1,a_2,\cdots]\in\I$, we discuss two cases:
	
	\medskip
	\noindent{\bf Case 1:} $a_i\equiv1$ for all $i\geq1$. In this case, fix large $n\in\N$, we construct two admissible words
	\begin{equation*}
		w=\mathbf{1}((\mathbf{2},1)_{1}(\mathbf{1},1)_1)^{3n},\ v=\mathbf{3}((\mathbf{1},1)_{1}(\mathbf{2},1)_1(\mathbf{3},1)_1)^{2n}\in\Omega^{\alpha}_{6n}.
	\end{equation*}
	Then by Lemma \ref{estimate-band}, we have 
	\begin{equation*}
		|B^{\alpha}_{w}|\geq\frac{4}{t_2^{6n}}t_2^{3n}\left(\frac{1}{4}\right)^{3n},\quad\ |B^{\alpha}_{v}|\leq\frac{4}{t_1^{6n}}\left(\frac{1}{2}\right)^{2n}
t_1^{2n}\left(\frac{1}{2}\right)^{2n}.
	\end{equation*}
	From this, we conclude that (since $t_1^4>4t_2^3$ if $\lambda\geq240$)
	\begin{equation*}
		\frac{|B^{\alpha}_{w}|}{|B^{\alpha}_{v}|}\geq
\left(\frac{t_1^4}{4t_2^3}\right)^{n}\to\infty\quad(n\to\infty).
	\end{equation*}
	
	\medskip
	\noindent{\bf Case 2:} $a_i>1$ for some $i\geq1$. In this case, for any $n\in\N$, we consider the integer sequence $a_1a_2\cdots a_n$. It may be divided by $1$'s into
	several segments that do not contain $1$'s,	i.e., we can write
	\begin{equation}
		a_1a_2\cdots a_n=A_11^{m_1}A_2\cdots A_s1^{m_s}A_{s+1}, \label{es7}
	\end{equation}
	where for any $i=1,\cdots, s+1$, $A_i=a_ma_{m+1}\cdots a_{m+l}$ for some $m>0,l\geq0,a_j>1$ for any $m\leq j\leq m+l$, and $m_i\geq1$. Note that if $a_1=1$, then $A_1=\emptyset$. 
	
	Now we can choose large $n\in\N$ such that $a_k>1$ for some $k<n$. Write $|A_i|=n_i$, in this case, we have that
	\begin{equation*}
		n=\sum_{i=1}^{s}m_i+\sum_{i=1}^{s+1}n_{i}\quad\text{and}\quad \sum_{i=1}^{s+1}n_{i}>0.
	\end{equation*}  
	Define $f(k):=\csc^2\frac{[k/2]}{k}\pi$. We will construct two admissible words $w,v\in\Omega^{\alpha}_{n}$ such that
	\begin{align}
		|B^{\alpha}_w|&\geq\frac{1}{t_2^n}\left(\frac{t_2}{4}\right)^{\sum_{i=1}^{s}[(m_i+1)/2]}
		\prod_{a_{i}\geq2}\frac{1}{a_if(a_i)},\label{low}\\
		|B^{\alpha}_v|&\leq\frac{4\sqrt3}{t_1^n}\left(\frac{t_1}{4}\right)^{\sum_{i=1}^{s}[(m_i+1)/3]}\left(\frac{1}{\sqrt{3}}\right)^{\sum_{i=1}^{s+1}n_i}
		\prod_{a_{i}\geq2}\frac{1}{a_if(a_i)}.\label{upp}
	\end{align}
	We show first that \eqref{low} and \eqref{upp} imply Lemma \ref{band-band}. If $\sum_{i=1}^{s}m_i=0$, then $s=0$ and $\sum_{i=1}^{s+1}n_i=n_1=n$. From this, we conclude that (note that $3^{1/6}t_1>6t_1/5>t_2$ if $\lambda\geq240$)
	\begin{align*}
		\frac{|B^{\alpha}_w|}{|B^{\alpha}_v|}&\geq \frac{(\sqrt{3})^{\sum_{i=1}^{s+1}n_i}}{4\sqrt3}\left(\frac{t_1}{t_2}\right)^n
		\left(\frac{t_2}{4}\right)^{\sum_{i=1}^{s}\left([\frac{m_i+1}{2}]-[\frac{m_i+1}{3}]\right)}
		\geq\frac{1}{4\sqrt3}\left(\frac{\sqrt{3}t_1}{t_2}\right)^n\to\infty\ (n\to\infty).
	\end{align*}
	If $\sum_{i=1}^{s}m_i>0$, we define $p:=\#\{1\leq i\leq s:m_{i}=2\}$, then we have
	\begin{equation*}
		n=\sum_{i=1}^{s}m_i+\sum_{i=1}^{s+1}n_{i}\geq\sum_{m_i=2}m_i+\sum_{m_{i}\neq2}m_i+s\geq 2p+(s-p)+s=2s+p\geq 3p.
	\end{equation*}
	So we conclude that (since $\lambda\geq240$, we have $t_2\geq4\cdot3^3=108$ and $3^{1/6}t_1>t_2$)
	\begin{align*}
		\frac{|B^{\alpha}_w|}{|B^{\alpha}_v|}&\geq\frac{1}{4\sqrt3}\left(\frac{t_1}{t_2}\right)^n\left(\frac{t_2}{4}\right)^{\sum_{i=1}^{s}\left([\frac{m_i+1}{2}]-[\frac{m_i+1}{3}]\right)}(\sqrt{3})^{\sum_{i=1}^{s+1}n_i}\\
		&=\frac{1}{4\sqrt3}\left(\frac{t_1}{t_2}\right)^n\left(\frac{t_2}{4}\right)^{\sum_{m_{i}\neq2}\left([\frac{m_i+1}{2}]-[\frac{m_i+1}{3}]\right)}(\sqrt{3})^{n-\sum_{m_i=2}m_i-\sum_{m_i\neq2}m_i}\\
		&\geq\frac{1}{4\sqrt3}\left(\frac{t_1}{t_2}\right)^n\left(\frac{t_2}{4}\right)^{\sum_{m_{i}\neq2}\frac{m_i}{6}}(\sqrt{3})^{n-2\cdot\frac{n}{3}-\sum_{m_i\neq2}m_i}\\
		&\geq\frac{1}{4\sqrt3}\left(\frac{t_1}{t_2}\right)^n\left(\frac{t_2}{4\cdot3^{3}}\right)^{\frac{1}{6}\sum_{m_{i}\neq2}m_i}3^{\frac{n}{6}}
		\geq\frac{1}{4\sqrt3}\left(\frac{3^{1/6}t_1}{t_2}\right)^n\to\infty\ (n\to\infty).
	\end{align*}
	
	Next, we construct admissible words $w,v\in\Omega^{\alpha}_{n}$ and show that equations \eqref{low} and \eqref{upp} hold. To show that equations \eqref{low} and \eqref{upp}, we need the following estimates, the proof of which is elementary
	and will be omitted.
	
	For any $k\geq2$, we have 
	\begin{align}
		&\frac{\sin^2\frac{[(k+1)/2]}{k+1}\pi}{k+1}\geq\frac{1}{2kf(k)},\quad \frac{\sin^2\frac{[(k+2)/2]}{k+2}\pi}{k+2}\geq\frac{1}{2kf(k)},\label{imp-1}\\
		&\frac{\sin^2\frac{\pi}{k+1}}{k+1}\leq\frac{1}{2kf(k)},\quad \frac{\sin^2\frac{\pi}{k+2}}{k+2}\leq\frac{1}{3kf(k)},\quad\frac{1}{2\cdot(2t_1)^{k-2}}\leq\frac{1}{kf(k)}.\label{imp-2}
	\end{align}
	
	The construction of the admissible words $w,v$ is carried out in two steps.
	
	\noindent{\bf Step 1:} We construct the admissible word $w\in\Omega^{\alpha}_{n}$ such that \eqref{low} holds. 
	
	Write $\tau_j=\sum_{i=1}^{j}(n_i+m_i).$ At first we define $\mathbf{t}_i,i =1,2,\cdots,\tau_s$ by induction. For any $j=1,2,\cdots,\tau_1$, we discuss two cases:
	
	{\rm Case 1:} $n_1=0$. Define
	\begin{equation*}
		\mathbf{t}_1\cdots \mathbf{t}_{m_1}=\begin{cases}
			\mathbf{2}(\mathbf{1}\mathbf{2})^{(m_1-1)/2},&\text{if $m_1$ is odd},\\
			(\mathbf{1}\mathbf{2})^{m_1/2},&\text{if $m_1$ is even}.
		\end{cases}
	\end{equation*}
	
	{\rm Case 2:} $n_1>0$. Define
	\begin{equation*}
		\mathbf{t}_1\cdots \mathbf{t}_{n_1}\cdot\mathbf{t}_{n_1+1}\cdots\mathbf{t}_{\tau_1}=\begin{cases}
			\mathbf{3}^{n_{1}-1}\mathbf{1}\cdot\mathbf{2}(\mathbf{1}\mathbf{2})^{(m_1-1)/2},&\text{if $m_1$ is odd},\\
			\mathbf{3}^{n_1}\cdot(\mathbf{1}\mathbf{2})^{m_1/2},&\text{if $m_1$ is even}.
		\end{cases}
	\end{equation*}
	
	Assume $\mathbf{t}_j$ is already defined for $j\leq\tau_{i-1}$. Now define $\mathbf{t}_j$, $j=\tau_{i-1}+1,\cdots,\tau_i$ as follows:
	\begin{equation*}
		\mathbf{t}_{\tau_{i-1}+1}\cdots \mathbf{t}_{\tau_{i-1}+n_i}\cdots \mathbf{t}_{\tau_i}=\begin{cases}
			\mathbf{3}^{n_{i}-1}\mathbf{1}\cdot\mathbf{2}(\mathbf{1}\mathbf{2})^{(m_i-1)/2},&\text{if $m_i$ is odd},\\
			\mathbf{3}^{n_{i}}\cdot(\mathbf{1}\mathbf{2})^{m_i/2},&\text{if $m_i$ is even}.
		\end{cases}
	\end{equation*}
	Thus by induction we have defined $\mathbf{t}_i$ for $i=1,2,\cdots,\tau_s$. Finally, if $n_{s+1}>0$, define 
	\begin{equation*}
		\mathbf{t}_{\tau_s+1}\cdots\mathbf{t}_{\tau_s+n_{s+1}}=
		\mathbf{3}^{n_{i}}.
	\end{equation*}
	
	By the admissible rules (\eqref{admissible-T-A} and \eqref{admissible-A-A}), we define the admissible word $w\in\Omega^{\alpha}_{n}$ as
	\begin{equation*}
		w=\begin{cases}
			\mathbf{1}w_1\cdots w_{n-1}w_{n},\quad&\text{if}\ n_1=0\text{\ and}\ m_1\text{ is odd},\\
			\mathbf{3}w_1\cdots w_{n-1}w_{n},\quad&\text{otherwise}.
		\end{cases}
	\end{equation*} 
	where $w_i=(\mathbf{t}_i,[(p(\mathbf{t}_{i-1}\mathbf{t}_i)+1)/2])_{a_i}$.
	
	By Lemma \ref{estimate-band}, 
	\begin{equation}\label{bw}
		|B_w^\alpha|\ge \frac{4}{t_2^n}\prod_{\mathbf{t}_i=\mathbf{2}}\frac{(2t_2)^{2-a_i}}{2}
		\prod_{\mathbf{t}_i\neq\mathbf{2}}
		\frac{\sin^2\frac{[(p(\mathbf{t}_{i-1}\mathbf{t}_i)+1)/2]\pi}{p(\mathbf{t}_{i-1}\mathbf{t}_i)+1}}
		{p(\mathbf{t}_{i-1}\mathbf{t}_i)+1}.
	\end{equation}
	
	We now estimate the above factors according to the construction of $w$. 
	
	For each $i=1,2,...,s+1$, and for every index $\tau_{i-1}+1\leq j\leq \tau_i$ (we adopt $\tau_{0}=0$ and $\tau_{s+1}=\tau_{s}+n_{s+1}$ by convention) with $a_j\ge2$, by construction of $w$, we have $\mathbf{t}_j\neq\mathbf{2}$. At each such position, \eqref{bw} contributes
	\begin{equation*}
		\frac{\sin^2\frac{[(p(\mathbf{t}_{j-1}\mathbf{t}_j)+1)/2]\pi}{p(\mathbf{t}_{j-1}\mathbf{t}_j)+1}}
		{p(\mathbf{t}_{j-1}\mathbf{t}_j)+1}
		\begin{cases}
			=\frac1{a_jf(a_j)},\quad&\text{if}\ \mathbf{t}_{j-1}\mathbf{t}_j=\mathbf{3}\mathbf{3},\\
			\ge \frac1{2a_jf(a_j)} \ \ (\text{by \eqref{imp-1}}),\quad&\text{if}\ \mathbf{t}_{j-1}\mathbf{t}_j=\mathbf{2}\mathbf{3},\ \mathbf{3}\mathbf{1},\ \mathbf{2}\mathbf{1}.
		\end{cases}
	\end{equation*}
	Furthermore, the second case is possible only at $\tau_{i-1}+1$ and $\tau_{i-1}+n_i$ by construction of $w$. Hence, for theses positions, \eqref{bw} contributes at least
	\begin{equation}\label{es1}
		\frac{1}{4}\prod_{j=\tau_{i-1}+1}^{\tau_{i-1}+n_i}\frac1{a_jf(a_j)}.
	\end{equation}
	
	For the indices with $a_j=1$, the contribution occurs
	when $\mathbf{t}_{j-1}\mathbf{t}_j=\mathbf{1}\mathbf{2}$ and $\mathbf{t}_{j-1}\mathbf{t}_j=\mathbf{2}\mathbf{1}$. By construction, the number of such indices in $j=\tau_{i-1}+1,\cdots,\tau_i$ equals
	\begin{align*}
		\#\{\tau_{i-1}+1\leq j\leq \tau_i:\mathbf{t}(w_{j-1})\mathbf{t}(w_j)=\mathbf{1}\mathbf{2},a_{j}=1\}
		=\left[\frac{m_i+1}{2}\right] \ \text{and} \\
		\#\{\tau_{i-1}+1\leq j\leq \tau_i:\mathbf{t}(w_{j-1})\mathbf{t}(w_j)=\mathbf{2}\mathbf{1},a_{j}=1\}
		=\left[\frac{m_i+1}{2}\right]-1.
	\end{align*}
	For these positions, \eqref{bw} contributes
	\begin{equation}\label{es2}
		t_2^{\left[\frac{m_i+1}{2}\right]}\left(\frac{1}{4}\right)^{\left[\frac{m_i+1}{2}\right]-1}
		=4\left(\frac{t_2}{4}\right)^{\left[\frac{m_i+1}{2}\right]}
	\end{equation}
	by noting that $(2t_2)^{2-a_j}/2=t_2$ with $a_{j}=1$ and 
	\begin{equation*}
		\frac{\sin^2\frac{[(p(\mathbf{t}_{j-1} \mathbf{t}_j)+1)/2]\pi}{p(\mathbf{t}_{j-1} \mathbf{t}_j)+1}}{p(\mathbf{t}_{j-1} \mathbf{t}_j)+1}
		=  \frac{\sin^2\frac{\pi}{3}}{3}=\frac{1}{4}\ \  \text{with } a_{j}=1 \ \text{and }\mathbf{t}_{j-1} \mathbf{t}_j=\mathbf{21}. 
	\end{equation*}
	
	Multiplying all these local estimates in \eqref{es1} for $i=1,2,...,s+1$ and those in \eqref{es2} for $i=1,2,...,s$, we obtain
	\begin{equation*}
		|B^\alpha_w|\ge\frac{1}{t_2^n}\Big(\frac{t_2}{4}\Big)^{\sum_{i=1}^{s}\left[\frac{m_i+1}{2}\right]}
		\prod_{a_i\ge2}\frac1{a_if(a_i)} .
	\end{equation*}
	
	\noindent{\bf Step 2:} We construct the admissible word $v\in\Omega^{\alpha}_{n}$ such that \eqref{upp} holds. 
	
	At first we define $\hat{\mathbf{t}}_i,i =1,2,\cdots,\tau_s$ by induction. For any $i=1,2,\cdots,\tau_1$, we discuss three cases:
	
	{\rm Case 1:} $n_1=0$. Define
	\begin{equation*}
		\hat{\mathbf{t}}_1\cdots \hat{\mathbf{t}}_{m_1}=\begin{cases}
			(\mathbf{1}\mathbf{2}\mathbf{1})(\mathbf{2}\mathbf{3}\mathbf{1})^{(m_1-3)/3},\ &\text{if}\ m_1=0(\text{mod}\ 3),\\
			\mathbf{1}(\mathbf{2}\mathbf{3}\mathbf{1})^{(m_1-1)/3},\ &\text{if}\ m_1=1(\text{mod}\ 3),\\
			\mathbf{2}\mathbf{1}(\mathbf{2}\mathbf{3}\mathbf{1})^{(m_1-2)/3},\ &\text{if}\ m_1=2(\text{mod}\ 3).
		\end{cases}
	\end{equation*}
	
	{\rm Case 2:} $n_1>0$ is odd. Define
	\begin{equation*}
		\hat{\mathbf{t}}_1\cdots \hat{\mathbf{t}}_{n_1}\cdot\hat{\mathbf{t}}_{n_1+1}\cdots\hat{\mathbf{t}}_{\tau_1}=\begin{cases}
			(\mathbf{2}\mathbf{1})^{(n_1-1)/2}\mathbf{2}\cdot(\mathbf{1}\mathbf{2}\mathbf{1})(\mathbf{2}\mathbf{3}\mathbf{1})^{(m_1-3)/3},\ &\text{if}\ m_1=0(\text{mod}\ 3),\\
			(\mathbf{2}\mathbf{1})^{(n_1-1)/2}\mathbf{2}\cdot\mathbf{1}(\mathbf{2}\mathbf{3}\mathbf{1})^{(m_1-1)/3},\ &\text{if}\ m_1=1(\text{mod}\ 3),\\
			(\mathbf{2}\mathbf{1})^{(n_1-1)/2}\mathbf{2}\cdot\mathbf{3}\mathbf{1}(\mathbf{2}\mathbf{3}\mathbf{1})^{(m_1-2)/3},\ &\text{if}\ m_1=2(\text{mod}\ 3).
		\end{cases}
	\end{equation*}
	
	{\rm Case 3:} $n_1>0$ is even. Define
	\begin{equation*}
		\hat{\mathbf{t}}_1\cdots \hat{\mathbf{t}}_{n_1}\cdot\hat{\mathbf{t}}_{n_1+1}\cdots\hat{\mathbf{t}}_{\tau_1}=\begin{cases}
			(\mathbf{2}\mathbf{1})^{(n_1-2)/2}	\mathbf{2}\mathbf{1}\cdot(\mathbf{2}\mathbf{3}\mathbf{1})^{m_1/3},\quad &\text{if}\ m_1=0(\text{mod}\ 3),\\
			(\mathbf{2}\mathbf{1})^{(n_1-2)/2}	\mathbf{2}\mathbf{3}\cdot\mathbf{1}(\mathbf{2}\mathbf{3}\mathbf{1})^{(m_1-1)/3},\quad &\text{if}\ m_1=1(\text{mod}\ 3),\\
			(\mathbf{2}\mathbf{1})^{(n_1-2)/2}\mathbf{2}\mathbf{1}	\cdot\mathbf{2}\mathbf{1}(\mathbf{2}\mathbf{3}\mathbf{1})^{(m_1-2)/3},\quad &\text{if}\ m_1=2(\text{mod}\ 3).
		\end{cases}
	\end{equation*}
	
	Assume $\hat{\mathbf{t}}_j$ is already defined for $j\leq\tau_{i-1}$. Now define $\hat{\mathbf{t}}_j$, $j=\tau_{i-1}+1,\cdots,\tau_i$ as follows.
	
	{\rm Case 1:} $n_i$ is odd. Define
	\begin{equation*}\label{type-1}
		\hat{\mathbf{t}}_{\tau_{i-1}+1}\cdots \hat{\mathbf{t}}_{\tau_{i-1}+n_i}\cdots \hat{\mathbf{t}}_{\tau_i}=\begin{cases}
			(\mathbf{2}\mathbf{1})^{(n_i-1)/2}\mathbf{2}\cdot(\mathbf{1}\mathbf{2}\mathbf{1})(\mathbf{2}\mathbf{3}\mathbf{1})^{(m_i-3)/3},\ &\text{if}\ m_i=0(\text{mod}\ 3),\\
			(\mathbf{2}\mathbf{1})^{(n_i-1)/2}\mathbf{2}\cdot\mathbf{1}(\mathbf{2}\mathbf{3}\mathbf{1})^{(m_i-1)/3},\ &\text{if}\ m_i=1(\text{mod}\ 3),\\
			(\mathbf{2}\mathbf{1})^{(n_i-1)/2}\mathbf{2}\cdot\mathbf{3}\mathbf{1}(\mathbf{2}\mathbf{3}\mathbf{1})^{(m_i-2)/3},\ &\text{if}\ m_i=2(\text{mod}\ 3).
		\end{cases}
	\end{equation*}
	
	{\rm Case 2:} $n_i$ is even. Define
	\begin{equation*}\label{type-2}
		\hat{\mathbf{t}}_{\tau_{i-1}+1}\cdots \hat{\mathbf{t}}_{\tau_{i-1}+n_i}\cdots \hat{\mathbf{t}}_{\tau_i}=\begin{cases}
			(\mathbf{2}\mathbf{1})^{(n_i-2)/2}\mathbf{2}\mathbf{1}\cdot(\mathbf{2}\mathbf{3}\mathbf{1})^{m_i/3},\ &\text{if}\ m_i=0(\text{mod}\ 3),\\
			(\mathbf{2}\mathbf{1})^{(n_i-2)/2}\mathbf{2}\mathbf{3}\cdot\mathbf{1}(\mathbf{2}\mathbf{3}\mathbf{1})^{(m_i-1)/3},\ &\text{if}\ m_i=1(\text{mod}\ 3),\\
			(\mathbf{2}\mathbf{1})^{(n_i-2)/2}\mathbf{2}\mathbf{1}\cdot\mathbf{2}\mathbf{1}(\mathbf{2}\mathbf{3}\mathbf{1})^{(m_i-2)/3},\ &\text{if}\ m_i=2(\text{mod}\ 3).
		\end{cases}
	\end{equation*}
	Thus by induction we have defined $\hat{\mathbf{t}}_i$ for $i=1,2,\cdots,\tau_s$. Finally, if $n_{s+1}>0$, define 
	\begin{equation*}
		\hat{\mathbf{t}}_{\tau_s+1}\cdots\hat{\mathbf{t}}_{\tau_s+n_{s+1}}=\begin{cases}
			(\mathbf{2}\mathbf{1})^{(n_{s+1}-1)/2}\mathbf{2},\quad &\text{if}\ n_{s+1}\text{ is odd},\\
			(\mathbf{2}\mathbf{1})^{n_{s+1}/2},\quad &\text{if}\ n_{s+1}\text{ is even}.
		\end{cases}
	\end{equation*}
	
	By the admissible rules (\eqref{admissible-T-A} and \eqref{admissible-A-A}), we define the admissible word $v\in\Omega^{\alpha}_{n}$ as
	\begin{equation*}
		v=\begin{cases}
			\mathbf{3}v_1\cdots v_{n-1}v_{n},\quad&\text{if $n_1=0$ and $m_1 \neq 2$(mod 3)},\\
			\mathbf{1}v_1\cdots v_{n-1}v_{n},\quad&\text{otherwise},
		\end{cases}
	\end{equation*} 
	where $v_i=(\hat{\mathbf{t}}_i,1)_{a_i}, 1\leq i\leq n$. 
	%By construction it is seen that
	%	\begin{align*}
		%		|v|_*:&=\#\{1\leq i\leq n:\mathbf{t}(v_{i-1})\mathbf{t}(v_i)=\mathbf{1}\mathbf{2},a_i=1\}\leq\sum_{i=1}^{s}\left[\frac{m_i+1}{3}\right].
		%	\end{align*}
	%Then by Lemma \ref{estimate-band} and \eqref{imp-2}, we conclude that
	%\begin{align*}
	%|B^{\alpha}_v|&\leq\frac{4}{t_1^n}\left(\frac{t_1}{4}\right)^{|v|_*}\left(\frac{1}{\sqrt{3}}\right)^{\sum_{i=1}^{s+1}n_i}\cdot
	%\prod_{a_{i}\geq2}\frac{1}{a_{i}f(a_i)}.
	%\end{align*}
	%		Now the result follows.

	For the word $v$, we apply the upper estimate in Lemma \ref{estimate-band}. Thus
	\begin{equation}\label{bv}
		|B_v^\alpha|\le\frac{4}{t_1^n}
		\prod_{\hat{\mathbf{t}}_i=2}\frac{(2t_1)^{2-a_i}}{2}
		\prod_{\hat{\mathbf{t}}_i\ne2}
		\frac{\sin^2\frac{\pi}{p(\hat{\mathbf{t}}_{i-1}\hat{\mathbf{t}}_i)+1}}
		{p(\hat{\mathbf{t}}_{i-1}\hat{\mathbf{t}}_i)+1}.
	\end{equation}

	We first estimate the contribution of the blocks $A_i$ in \eqref{es7}.  On such a block one has
	$a_j\ge 2$.  According to the construction of $\hat{\mathbf{t}}_j$, the symbols in $A_i$ are
	arranged as alternating patterns of the form $\mathbf{21}$, possibly with one terminal symbol. Thus the number of indices in $A_i$ for which $\hat{\mathbf{t}}_j\ne 2$ is at least
	$[ n_i/2]$, while the remaining indices have $\hat{\mathbf{t}}_j=2$.
	
	If $\hat{\mathbf{t}}_j=2$, then the corresponding factor in \eqref{bv} is
	\begin{equation*}
		\frac{(2t_1)^{2-a_j}}{2}\le \frac{1}{a_j f(a_j)}
	\end{equation*}
	by the last inequality in \eqref{imp-2}. 
	If $\hat{\mathbf{t}}_j\ne 2$, then $\hat{\mathbf{t}}_{j-1} \hat{\mathbf{t}}_j=\mathbf{2}\mathbf{1}$ or $\mathbf{2}\mathbf{3}$, and hence the first two inequalities in \eqref{imp-2} give
	\begin{equation*}
		\frac{\sin^2\frac{\pi}{p(\hat{\mathbf{t}}_{j-1} \hat{\mathbf{t}}_j)+1}}
		{p(\hat{\mathbf{t}}_{j-1} \hat{\mathbf{t}}_j)+1}\leq
		\begin{cases}
			\frac1{3a_jf(a_j)},\quad&\text{if}\ \hat{\mathbf{t}}_{j-1} \hat{\mathbf{t}}_j=\mathbf{2}\mathbf{1},\\
			\frac1{2a_jf(a_j)} ,\quad&\text{if}\ \hat{\mathbf{t}}_{j-1} \hat{\mathbf{t}}_j=\mathbf{2}\mathbf{3}.
		\end{cases}
	\end{equation*}
	Furthermore, the second case is possible only at $\tau_{i-1}+n_i$ when $n_i>0$ is even and $m_i=1\pmod 3$ by construction of $v$. Therefore, we get
	\begin{equation}
		\text{contribution of }A_i \le 
		\left(\frac1{3}\right)^{[\frac{n_i}{2}]}\left(1+\frac{1_{\{2|n_i,\ m_i=1\pmod3\}}}{2}\right)
		\prod_{a_j\in A_i}\frac{1}{a_j f(a_j)} . \label{es3}
	\end{equation}
	
	It remains to estimate the block $1^{m_i}$. 
	
	When $n_i>0$ is odd, by construction, $1^{m_i}$ is arranged as
	\begin{equation}
		(\mathbf{121})(\mathbf{231})^{(m_i-3)/3},\qquad
		\mathbf{1}(\mathbf{231})^{(m_i-1)/3},\qquad
		\mathbf{31}(\mathbf{231})^{(m_i-2)/3}, \label{odd}
	\end{equation}
	according to $m_i= 0,1,2\pmod 3$. For each pattern $\mathbf{231}$, the
	corresponding factors in \eqref{bv} are bounded by
	\[
	\frac{t_1}{4}.
	\]
	by noting that $(2t_1)^{2-a_j}/2=t_1$ for $a_{j}=1$ and 
	\begin{equation*}
		\frac{\sin^2\frac{\pi}{p(\hat{\mathbf{t}}_{j-1} \hat{\mathbf{t}}_j)+1}}
		{p(\hat{\mathbf{t}}_{j-1} \hat{\mathbf{t}}_j)+1}=\frac{1}{2} \ \text{with } a_{j}=1 \ \text{and }\hat{\mathbf{t}}_{j-1} \hat{\mathbf{t}}_j=\mathbf{23},\ \mathbf{31}.
	\end{equation*}
	Similarly, the corresponding factors in \eqref{bv} of the initial pattern $\mathbf{121}$ (in \eqref{odd}) are bounded by $\frac{t_1}{16}$ since
	\begin{equation*}
		\frac{\sin^2\frac{\pi}{p(\hat{\mathbf{t}}_{j-1} \hat{\mathbf{t}}_j)+1}}
		{p(\hat{\mathbf{t}}_{j-1} \hat{\mathbf{t}}_j)+1}=\frac{1}{4} \ \ \  \text{with } a_{j}=1 \ \text{and }\hat{\mathbf{t}}_{j-1} \hat{\mathbf{t}}_j=\mathbf{21}.
	\end{equation*}
	And, for the initial patterns $\mathbf{1}$, $\mathbf{31}$ in \eqref{odd}, the corresponding factors in \eqref{bv} are upper bounded by $1/4$.
	Therefore, the contribution of $1^{m_i}$ in \eqref{bv} is bounded by
	\begin{equation}
		\max\left\{\frac{t_1}{16}\left(\frac{t_1}{4}\right)^{\frac{m_i-3}{3}},
		\frac{1}{4}\left(\frac{t_1}{4}\right)^{\frac{m_i-1}{3}},
		\frac{1}{4}\left(\frac{t_1}{4}\right)^{\frac{m_i-2}{3}}\right\}
		\leq\frac{1}{4}\left(\frac{t_1}{4}\right)^{\left[\frac{m_i+1}{3}\right]} . \label{es4}
	\end{equation}
	
	When $n_i$ is even, by construction, $1^{m_i}$ is arranged as
	\begin{equation*}
		(\mathbf{231})^{m_i/3},\qquad
		\mathbf{1}(\mathbf{231})^{(m_i-1)/3},\qquad
		\mathbf{21}(\mathbf{231})^{(m_i-2)/3}
	\end{equation*}
	according to $m_i=0,1,2\pmod3$.	Similarly, the contribution of $1^{m_i}$ in \eqref{bv} is bounded by
	\begin{equation}
		\max\left\{\left(\frac{t_1}{4}\right)^{\frac{m_i}{3}},
		\frac{1}{2}\left(\frac{t_1}{4}\right)^{\frac{m_i-1}{3}},
		\frac{t_1}{4}\left(\frac{t_1}{4}\right)^{\frac{m_i-2}{3}}\right\}
		\leq\left(1-\frac{1_{\{m_i=1\pmod3\}}}{2}\right)
		\left(\frac{t_1}{4}\right)^{\left[\frac{m_i+1}{3}\right]} . \label{es5}
	\end{equation}
	
	Combining \eqref{es3}, \eqref{es4} and \eqref{es5}, we know that the contributions of $A_i$ and $1^{m_i}$ in \eqref{bv} are bounded by 
	\begin{equation}\label{es6}		\left(\frac1{3}\right)^{\frac{n_i}{2}}\left(\frac{t_1}{4}\right)^{\left[\frac{m_i+1}{3}\right]}
\prod_{a_j\in A_i}\frac{1}{a_j f(a_j)},
	\end{equation}
	and it is easy to check that \eqref{es6} holds for the case $n_1=0$.
	
	Multiplying \eqref{es6} for $i=1,\ldots,s$ and \eqref{es3} for $i=s+1$, we obtain
	\begin{equation*}
		|B_v^\alpha|\le \frac{4\sqrt3}{t_1^n} \left(\frac{t_1}{4}\right)^{\sum_{i=1}^s
			\left[\frac{m_i+1}{3}\right]}
		\left(\frac1{\sqrt3}\right)^{\sum_{i=1}^{s+1}n_i} 
		\prod_{a_j\ge 2}\frac{1}{a_j f(a_j)},
	\end{equation*}
	which proves \eqref{upp}.

	\smallskip
	\noindent{\bf Acknowledgement}.
	The authors extend their sincere gratitude to Yanhui Qu for his meticulous review of the preliminary manuscript and insightful discussions that significantly contributed to the refinement of this work. 

	\appendix
	\section{A brief proof of Lemma \ref{estimate-band}}\label{exp}
	In this Appendix, we mainly explain the origin of Lemma \ref{estimate-band}. We follow the method in the proof of \cite[Lemma 3.7]{LQW2014} and explain the necessary modifications. For the specific details of that proposition, the reader is referred to \cite[Propositions 3.1, 3.2 and 3.3]{FLW2011}.
	
	We write $B_w:=B^{\alpha}_{w}$ and $\hat{B}_w:=\hat{B}^{\alpha}_{w}$ for simplicity, and we omit the frequency $\alpha$ when no confusion arises.
	
	Given $w\in\Omega^{\alpha}_n$, write $w=w_1\cdots w_n$ and $w|_m=w_1\cdots w_m$
	for $m=1,\cdots,n.$ Write $B_m=B^{\alpha}_{w|_m}.$ Then
	for any $k\le n$
	$$
	B_n\subseteq B_{n-1}\subseteq\cdots\subseteq B_{k}
	$$
	is  a sequence of spectral generating bands from order $n$ to $k$. We
	call the sequence $(B_i)_{i=k}^n$ an {\em initial ladder}, and the
	bands $B_i(k\le i\le n)$ are called  {\it initial rungs}. Now we are going
	to modify the initial ladder by the following way: for any $i(k\le
	i\le n-1)$,
	\begin{itemize}
		\item if   $B_i$ is of $(i,\mathbf{1})$-type with $a_{i+1}=1$:  delete the rung
		$B_{i+1}$ (in this case $B_{i+1}$ must be $(i+1,\mathbf{2})$-type and $B_{i+1}=B_i$);
		
		\item if $B_i$ is of $(i,\mathbf{1})$-type with $a_{i+1}=2$: change
		nothing;
		
		\item if $B_i$ is of $(i,\mathbf{1})$-type with $a_{i+1}>2$: add rungs
		$(B_{(i,p)})_{p=2}^{a_{i+1}-1}$ between $B_i$ and $B_{i+1}$ :
		$$B_{i+1}=B_{(i,a_{i+1})}\subset B_{(i,a_{i+1}-1)}\subset\cdots\subset
		B_{(i,2)}\subset B_{(i,1)}=B_i;$$
		where $B_{(i,p)}$ is the unique band in $\sigma_{(i,p)}$ which is included in $B_i$.
		\item if $B_i$ is of $(i,\mathbf{2})$ or $(i,\mathbf{3})$-type: change nothing.
	\end{itemize}
	By this way we get  a unique modified ladder which we relabel as
	$$B_n=\hat{B}_m\subset\cdots\subset\hat{B}_1\subset \hat{B}_{0}=B_{k}.$$
	We call $(\hat{B}_i)_{i=0}^m$ the {\em modified ladder}, and we
	denote the corresponding generating polynomials by
	$(\hat{h}_i)_{i=0}^m$. For $i=0,\cdots,m-1$ define
	\begin{equation}\label{pvalue}
		(p_i,l_i)=
		\begin{cases}
			(p(\mathbf{t}_{j-1}\mathbf{t}_j),l), &
			\begin{array}{l}
				\text{ if }  (\hat B_{i},\hat B_{i+1})=(B_{j-1},B_{j})\text{ for some } j \text{ and } w_j=(\mathbf{t}_j,l),
			\end{array} \\
			(1,1), & \text{  otherwise},
		\end{cases}
	\end{equation}
where $p$ is defined in \eqref{def-p}.	We call $(p_i)_{i=0}^{m-1}$ and $(l_i)_{i=0}^{m-1}$
	the {\it type sequence } and {\it index sequence } of the modified ladder.
	
	Let $p\ge1$, $1\le l\le p$. Define
	$$I_{p,l}:=\left\{2\cos\frac{l+c}{p+1}\pi\ :\ |c|\le\frac{1}{150}\mbox{ and }
	\ \left|S_{p+1}\left(2\cos\frac{l+c}{p+1}\pi\right)\right|\le\frac{1}{54}
	\right\},$$
	where $S_{p}(x)$ is the Chebyshev polynomial defined by
	\begin{align*}
		S_0(x)\equiv0,\quad S_1(x)\equiv1,\quad S_{p+1}(x)=xS_{p}(x)-S_{p-1}(x),\ p\geq1.
	\end{align*}
By induction, we know that 
\begin{equation}\label{e.sp}
 S_p(2\cos\theta)=\frac{\sin (p\theta)}{\sin\theta},\ \ \theta\in(0,\pi).
\end{equation}
	\begin{proposition}\label{index}
		Assume $\lambda\ge240$. Let $(\hat{B}_i)_{i=0}^m$ be a modified ladder,
		$(\hat{h}_i)_{i=0}^{m}$ the corresponding generating polynomials,
		and $(p_i)_{i=0}^{m-1}, (l_i)_{i=0}^{m-1}$ be the type sequence and index sequence respectively. Then for any
		$0\le i<m$,
		$$\hat{h}_i(\hat{B}_{i+1})\subset I_{p_i,l_i}.$$
	\end{proposition}
	\begin{proof}
		The proof of this proposition can be directly referred to \cite[Proposition 3.1]{FLW2011}, provided that the corresponding required that $\lambda\geq240$. We now briefly explain this point.
		
		Since $\lambda\geq240$, then by the proof of \cite[Proposition 3.1]{FLW2011}, we know that
		\begin{equation*}
			|S_{p_i+1}(\hat{h}_{i}(x))|\leq\frac{4}{\lambda-4}\leq\frac{1}{54},
		\end{equation*}
		and for any $x\in\hat{B}_{i+1}$, there exist an integer $l$($1\leq l\leq p_i$) and a unique $c$ with $|c|<1$ such that $\hat{h}_i(x)=2\cos((l+c)/(p_i+1))\pi$. Then, we have from \eqref{e.sp} that
		\begin{equation*}
			|S_{p_i+1}(\hat{h}_{i}(x))|=\frac{|\sin c\pi|}{\sin((l+c)/(p_i+1))\pi}\leq\frac{1}{54}.
		\end{equation*}
		This implies that $|\sin c\pi|\leq1/54$, and we have $|c|\leq1/150$ or $|c\pm1|\leq1/150.$ Applying the argument of \cite[Proposition 3.1]{FLW2011} yields $|c|\leq 1/150$, and the statement follows.
	\end{proof}
	
	We collect some useful estimations of Chebyshev polynomials on the interval $I_{p,l}$,
	which is essentially the  \cite[Proposition 7]{LW2004}, see also \cite[Proposition 3.2]{FLW2011}.
	\begin{proposition}\label{keyLW}
		Fix $p\ge1$, $1\le l\le p$. For any $t\in I_{p,l}$,
		\begin{align}\label{new-est}
			&	|S_{p+1}(t)|\le\frac{1}{4},\quad |S_p(t)|\le \frac{5}{4},\quad |S'_p(t)|\le2|S'_{p+1}(t)|,\notag\\
%			&|S''_{p+1}(t)|\le 4p^2\csc^3\frac{l\pi}{p+1},\quad |S''_{p}(t)|\le 4p^2\csc^3\frac{l\pi}{p+1},\notag\\
			&		\frac{12(p+1)}{25}\csc^2\frac{l\pi}{p+1}
			\le |S'_{p+1}(t)|\le \frac{13(p+1)}{25}\csc^2\frac{l\pi}{p+1}.
		\end{align}
	\end{proposition}
	\begin{proof}
		We only prove \eqref{new-est}; the remaining formulas can be found in \cite[Proposition 3.2]{FLW2011}.
		
		By the definition of $I_{p,l}$, $0< |c|<1/150$. Then we have 
		\begin{equation*}
			\left|(p+1)\tan\frac{l+c}{p+1}\pi\right|\geq(1-|c|)\pi=\frac{149\pi}{150};\quad \frac{149}{150}\frac{l\pi}{p+1}\leq\frac{l+c}{p+1}\pi\leq\frac{151}{150}\frac{l\pi}{p+1}.
		\end{equation*}
		Note that $\sin x/x$ is decreasing on $(0,\pi)$, then 
		\begin{equation*}
			\frac{150^2}{151^2}\csc^2\frac{l\pi}{p+1}\leq\csc^2\frac{l+c}{p+1}\leq\frac{150^2}{149^2}\csc^2\frac{l\pi}{p+1}.
		\end{equation*}
		
By \eqref{e.sp}, we have
		\begin{align*}
			S'_{p+1}\left(2\cos\frac{l+c}{p+1}\pi\right)&=\frac{(-1)^{l+1}(p+1)\cos c\pi}{2\sin^2((l+c)/(p+1))\pi}\left[1-\frac{\tan c\pi}{(p+1)\tan((l+c)/(p+1))\pi}\right]\\
			&=\frac{(-1)^{l+1}(p+1)}{2\sin^2((l+c)/(p+1))\pi}\left[\cos c\pi-\frac{\sin c\pi}{(p+1)\tan((l+c)/(p+1))\pi}\right].
		\end{align*}
		This combines with $|\sin c\pi|\leq1/54$, now we get that
		\begin{align*}
			\left|S'_{p+1}(2\cos\frac{l+c}{p+1}\pi)\right|&\leq\frac{(p+1)\csc^2\frac{l+c}{p+1}\pi}{2}\left(|\cos c\pi|+\frac{|\sin c\pi|}{|(p+1)\tan((l+c)/(p+1))\pi|}\right)\\
			&\leq\frac{p+1}{2}\csc^2\frac{l\pi}{p+1}\cdot\frac{150^2}{149^2}\left(1+\frac{1/54}{149\pi/150}\right)\\
			&\leq\frac{13(p+1)}{25}\csc^2\frac{l\pi}{p+1},
		\end{align*}
		and
		\begin{align*}
			\left|S'_{p+1}(2\cos\frac{l+c}{p+1}\pi)\right|&\geq\frac{(p+1)\csc^2\frac{l+c}{p+1}\pi}{2}\left(|\cos c\pi|-\frac{|\sin c\pi|}{|(p+1)\tan((l+c)/(p+1))\pi|}\right)\\
			&\geq\frac{p+1}{2}\csc^2\frac{l\pi}{p+1}\cdot\frac{150^2}{151^2}\left(\cos\frac{\pi}{150}-\frac{1/54}{149\pi/150}\right)\\
			&\geq\frac{12(p+1)}{25}\csc^2\frac{l\pi}{p+1}.
		\end{align*}
		Now the result follows.
	\end{proof}
	Next, we directly use the above proposition and obtain the following conclusion; for details, see the proof of \cite[Proposition 3.3]{FLW2011}. We remark that, compared with \cite[Proposition 3.3]{FLW2011}, we only modify \eqref{new-est}, so the coefficients of $\lambda$ in the following proposition are modified accordingly.
	
	\begin{proposition}\label{lm-2}
		Assume $\lambda\ge240$. Let $(\hat{B}_i)_{i=0}^m$ be a modified ladder,
		$(\hat{h}_i)_{i=0}^m$, $(p_i)_{i=0}^{m-1}$ and $(l_i)_{i=0}^{m-1}$
		be the corresponding generating polynomials, type sequence and index sequence.
		For any $0\le i< m$, $x\in \hat{B}_{i+1}$, we have,
		$$\frac{12(\lambda-8)}{25}(p_i+1)\csc^2\frac{l_i\pi}{p_i+1}
		\le\left|\frac{\hat{h}'_{i+1}(x)}{\hat{h}'_{i}(x)}\right|
		\le \frac{13(\lambda+8)}{25}(p_i+1)\csc^2\frac{l_i\pi}{p_i+1}.$$
	\end{proposition}
	
	\smallskip
	
	\noindent {\bf Proof of Lemma \ref{estimate-band}.}\
	Given $w\in \Omega^{\alpha}_n.$  Consider the initial ladder $(B_i)_{i=0}^n$ with $B_0$ the unique band in $\mathcal{B}^{\alpha}_0$ containing $B_w$ and $B_n=B_w.$
	Let $(\hat{B}_i)_{i=0}^m$ be the related  modified ladder and
	$(\hat{h}_i)_{i=0}^m$, $(p_i)_{i=0}^{m-1}$ and $(l_i)_{i=0}^{m-1}$
	be the corresponding generating polynomials, type sequence and index
	sequence.
	Since $\hat h_m(\hat B_m)=[-2,2]$, there exists $x_0\in\hat B_m$
	such that $|\hat h_m^\prime(x_0)| |\hat B_m|=4.$
	Also, note that
	$|\hat{h}_{0}^\prime|\equiv 1$,
	then by Proposition \ref{lm-2}, the definition of  modified ladder and \eqref{pvalue}, we have 
	\begin{align*}
		|B_w|&=|\hat{B}_m|= 4\frac{|\hat h_0^\prime(x_0)|}{|\hat h_m^\prime(x_0)|}\le 4
		\prod_{i=0}^{m-1}\frac{\sin^2\frac{l_i\pi}{p_i+1}}{(p_i+1)t_1}\ \text{(where $t_1=12(\lambda-8)/25$)}\\
&\le4\prod_{\mathbf{t}_j=\mathbf{2}}\frac{1}{(2t_1)^{a_j-1}}\cdot \prod_{\mathbf{t}_{j}\neq\mathbf{2}}
		\frac{\sin^2\frac{l_j\pi}{p_j+1}}{(p(\mathbf{t}_{j-1}\mathbf{t}_j)+1)t_1}\\
		&\leq\frac{4}{t_1^n}\prod_{\mathbf{t}_j=\mathbf{2}}\frac{(2t_1)^{2-a_j}}{2}\prod_{\mathbf{t}_j\neq\mathbf{2}}\frac{\sin^2\frac{l_j\pi}{p(\mathbf{t}_{j-1}\mathbf{t}_j)+1}}{(p(\mathbf{t}_{j-1}\mathbf{t}_j)+1)}.
	\end{align*}
Let $t_2=13(\lambda+8)/25$.	Similarly, we have
	\begin{align*}
		|B_w|&\ge  4
		\prod_{i=0}^{m-1}\frac{\sin^2\frac{l_i\pi}{p_i+1}}{(p_i+1)t_2}\ge4\prod_{\mathbf{t}_{j}\neq\mathbf{2}}\frac{1}{(2t_2)^{a_j-1}}\cdot \prod_{\mathbf{t}_j\neq\mathbf{2}}\frac{\sin^2\frac{l_j\pi}{p(\mathbf{t}_{j-1}\mathbf{t}_j)+1}}{(p(\mathbf{t}_{j-1}\mathbf{t}_j)+1)t_2}\\
		&\ge\frac{			4}{t_2^n}\prod_{\mathbf{t}_j=\mathbf{2}}\frac{(2t_2)^{2-a_j}}{2}\prod_{\mathbf{t}_j\neq\mathbf{2}}\frac{\sin^2\frac{l_j\pi}{p(\mathbf{t}_{j-1}\mathbf{t}_j)+1}}{(p(\mathbf{t}_{j-1}\mathbf{t}_j)+1)}.
	\end{align*}
	Now we complete the proof of this lemma.
	\hfill $\Box$

	\smallskip
	\noindent{\bf Declarations}
	
	\textbf{Funding:} J. Cao was partially supported by Nankai Zhide Foundation. Z. Yu was supported by the Natural Science Foundation of Hunan Province, China (No.2025JJ60039).
	
	\textbf{Conflict of interest:} The authors declare that they have no conflict of interest.
	
	\textbf{Data availability statement:} Data sharing is not applicable to this article as no datasets were generated or analysed during the current study.
	
	%\bibliographystyle{siam}
%	\bibliography{refs.bib}

\end{document}